\newcommand{\R}{\mathbb R}
\newcommand{\N}{\mathbb N}
\newcommand{\PP}{\mathbb P}
\newcommand{\A}{\mathcal{A}}
\renewcommand{\S}{\mathcal{S}}
\newcommand{\tagliato}{$\kern-5 mm -$}
\newcommand{\tagliat}{$\kern-4 mm -$}
\newcommand{\MM}{\mathcal M}
\newtheorem{Th}{Theorem}[section]
\newtheorem*{Th*}{Theorem}
\newtheorem{prop}[Th]{Proposition}
\newtheorem{lemma}[Th]{Lemma}
\newtheorem{df}[Th]{Definition}
\newtheorem{nota}{Notation}[section]
\newtheorem{guess}[Th]{Remark}
\newtheorem{example}[Th]{Example}
\newenvironment{oss}{\begin{guess} \begin{rm}}{\end{rm} \end{guess}}
\begin{document}

\title{Convergence of solutions for some degenerate discounted Hamilton--Jacobi equations}
\author{Maxime Zavidovique}
\address{
IMJ (projet Analyse Alg\' ebrique), UPMC,  
4, place Jussieu, Case 247, 75252 Paris C\' edex 5, France}
\email{zavidovique@math.jussieu.fr} \keywords{Discounted  Hamilton--Jacobi equations, viscosity solutions, weak KAM Theory, Mather measures}
\subjclass[2010]{}

\maketitle

\begin{abstract}
We study solutions of Hamilton--Jacobi equations of the form 
$$\lambda \alpha(x) u_\lambda(x) + H(x, D_x u_\lambda) = c,$$
 where $\alpha$ is a nonnegative function, $\lambda$ a positive constant, $c$ a constant and $H $ a convex coercive Hamiltonian. Under suitable conditions on $\alpha$ we prove that the functions $u_\lambda$ converge as $\lambda\to 0$ to a function $u_0$ that is a solution of the critical equation $H(x, D_x u_0) = c$.
\end{abstract}

%
%
\section*{Introduction}
The use of discounting in optimal control and Hamilton--Jacobi equations theory is both motivated by the models used and very useful from a theoretical point of view. In the models, the discount accounts for the lesser influence of events far away in time. From the theoretical point of view, the discount brings exponential terms $e^{-\lambda t}$ allowing to consider infinite horizon problems as  improper integrals converge, and in Hamilton--Jacobi equations, it allows to prove strong comparison principle (or to make some solution operators strongly contracting) and are followed by powerful existence and uniqueness results.     

To our knowledge, one of the first striking applications of this method was for homogenization purposes in the famous \cite{LPV}. The authors consider a periodic (in the first variable) coercive (in the second variable) Hamiltonian $H : \R^N \times \R^N \to \R$ and for $\lambda>0$ prove that there exists a unique periodic viscosity solution $u_\lambda : \R^N \to \R$ to
$$\lambda u(x) +H(x,D_x u) = 0.$$
They then prove that the $(u_\lambda)_\lambda$ are equiLipschitz and that $\lambda u_\lambda$ converges uniformly to a constant, $-c_H$, as $\lambda \to 0$. Using the Ascoli Theorem, the authors conclude to the existence of a periodic viscosity solution to the cell problem
$$H(x,D_xu) = c_H.$$
 
 This last point uses an extraction and the problem of actual convergence of the family $u_\lambda$ was not tackled for quite some time. Following a first breakthrough in this direction \cite{IS}, a general convergence Theorem was established, under an additional convexity hypothesis in \cite{DFIZ}. Since then there has been a huge activity on this problem and therefore an important subsequent literature. Let us state amongst others \cite{DFIZ2} for a discrete time version, \cite{ISico} for a non compact version, \cite{PSnet} on networks, \cite{CCIZ,WYZ,Chen} for nonlinear discounting versions, \cite{I21,I22} for second order PDE versions, \cite{DZsys} for weakly coupled systems, following ideas from \cite{DSZ} and then widely generalized in \cite{Isys1,Isys2}, finally let us mention \cite{A12} for a more geometric interpretation of the convergence result and references therein.

Such convergence results also have limitations. All the previous require some convexity assumptions and a striking counterexample has been constructed in \cite{Zi} when otherwise. Moreover, all the previous works require that the discounting be increasing in some sort and counterexamples to the convergence result appear in \cite{CCIZ} when this monotonicity is not strong enough. 

The goal of this paper is to provide a setting with non--decreasing discounting where convergence still holds. More precisely we study equations of the form 
$$\lambda \alpha(x)u(x) +H(x,D_x u) = c_H,$$
where $\alpha$ is a nonnegative function that may vanish at some places. From  an economical point of view, this may provide a model for settings in which interest rates depend on the space variable and are allowed to vanish at some places. 
From a theoretical point of view, if $\alpha$ is positive, we will see in Proposition \ref{bb2} that the study reduces to the previous case. If $\alpha$ is identically $0$ on the contrary, not much can be said. We require in this paper that $\alpha$ is positive on the Aubry set of $H$ that plays a major role in the study of the limiting equation $H(x,D_xu) = c_H$. Our result is then that the solutions $u_\lambda$ to the discounted equations converge when $\lambda \to 0$. To obtain such a result, the main difficulty is to obtain quantitative properties on the behavior of characteristic trajectories associated to $u_\lambda$. This is done in Proposition \ref{excursion}. 

\subsection*{Organization of the paper}
In section \ref{setting},  we introduce the setting, the main hypotheses and objects of study and state precisely our main result.

In section \ref{generality}, we state well known, or folklore results for general Hamilton--Jacobi equations that we later apply to our particular setting.

In section \ref{sec:weakKAM} we recall main features and tools of weak KAM theory and Aubry--Mather theory. Those are at the core of the proof of the main result.

In section \ref{sec:deg} we study our degenerate discounted equations. In particular, we prove they satisfy a strong comparison principle (Proposition \ref{comp-l}) and in particular that there exists indeed a unique solution $u_\lambda$ for each fixed
 $\lambda>0$. We then derive from classical weak KAM theory representation formulas of Lax--Oleinik type for $u_\lambda$ (in Theorem \ref{representation} and \ref{representation-l}) and we establish the main technical lemma about minimizing trajectories for $u_\lambda$ (Proposition \ref{excursion}).

Finally, in section \ref{sec:proof} we prove the convergence result.

To conclude, in section \ref{formula} we establish an alternate expression for the limit $u_0$. For readers familiar with \cite{DFIZ}, this formula will not come as a surprise.

\subsection*{Acknowledgement} The authors wishes to thank the anonymous referees for their help in improving the motivations and presentation of the present work.

\section{Setting and main result}\label{setting}
Let $M$ be a closed connected smooth compact manifold. We denote by $TM$ and $T^*M $ respectively the tangent and cotangent bundles of $M$.

We endow $M$ with a Riemannian metric $g$ and denote by $d : M\times M \to \R$ the associated distance. As $M$ is compact, all such Riemannian metrics are equivalent and all our results are independent of this choice. If $x\in M$ we will denote by $\|\cdot \|_x$ the norm associated to $g$, either on $T_xM$ or on $T^*_x M$. We will denote by $\pi$ the canonical projections from $TM\to M$ and from $T^*M\to M$, $(x,v),(x,p)\to x$ according to the context. If $A$ is any set and $f : A\to \R$ is a bounded function we denote by $\|f\|_\infty = \sup\limits_{x\in A} |f(x)|$ its sup norm.

Unless explicitly specified, when a sequence of functions is convergent, it will always be for the sup norm.

We will consider a Hamiltonian function $H : T^*M \to \R$ that we will always assume to be continuous. Moreover we will assume that 
\begin{itemize}
\item[(H1)]\label{H1}(Convexity) For every $x\in M$ the function $H(x,\cdot) : T_x^*M \to \R$ is convex.
\item[(H2)]\label{H2}(Coercivity) $H(x,p) \to +\infty$ as $\|p\|_x \to +\infty$.
\end{itemize}
Note that thanks to the compactness of $M$ and the convexity of $H$, it is equivalent to require that coercivity holds pointwise for all $x\in M$ or uniformly on $M$. 

We will at some point enforce (H2) by the stronger condition
\begin{itemize}
\item[(H2')]\label{H2'}(Superlinearity) $H(x,p)/\|p\|_x \to +\infty$ as $\|p\|_x \to +\infty$.
\end{itemize}
Again, this limit is equivalently verified pointwise or uniformly in $x$.

Associated to $H$ is the critical value $c_H\in \R$, it is the only real number such that the critical equation 
\begin{equation}\label{HJ-crit}
H(x,D_x u) = c_H, \quad \quad x \in M \tag{HJ-crit},
\end{equation}
admits viscosity solutions\footnote{All notions and definitions related to viscosity will be given in the next section.}. An important object to study such solutions is the projected Aubry set $\A \subset M$ (that will be precisely defined later). It is closed, compact and non--empty. Moreover one of its fundamental properties  is provided by the following theorem (\cite[Theorem 6.2]{FS05}):
\begin{Th}\label{strict}
There exists a continuous function $v : M\to \R$ that is a viscosity subsolution of \eqref{HJ-crit}, that is $C^{\infty}$ and strict on $M\setminus \A$ meaning that
$$\forall x\in M\setminus \A , \quad H(x,D_x v) < c_H.$$
\end{Th}

Finally, let us introduce the function we will use to discount the Hamilton--Jacobi equation: $\alpha : M\to \R$ is a continuous function that verifies
\begin{itemize}
\item[($\alpha$1)]\label{a1}(Non--negativity) The function $\alpha$ is nonnegative on $M$.
\item[($\alpha$2)]\label{a2}(Positivity) The function $\alpha$ is positive on $\A$.
\end{itemize}

The discounted equations we will be studying are the following: given a positive constant $\lambda >0$
\begin{equation}\label{HJ-l}
\lambda \alpha(x) u(x) + H(x,D_x u) = c_H, \quad \quad x \in M \tag{HJ-$\lambda$}.
\end{equation}

As we will see in Proposition \ref{comp-l}, given $\lambda >0$, the previous equation admits a unique viscosity solution $u_\lambda$. Our main theorem is the following:

\begin{Th}\label{main}
The family $(u_\lambda)_{\lambda >0}$ uniformly converges, as $\lambda \to 0$ to a function $u_0 : M\to \R$ that is a viscosity solution to 
$$ H(x,D_x u_0) = c_H, \quad \quad x \in M.$$
\end{Th}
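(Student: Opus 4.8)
The plan is to follow the weak KAM strategy for discounted equations, adapting it to the degeneracy of $\alpha$. \emph{Compactness.} Let $w$ be any critical solution. Since $\alpha\ge0$, for suitable constants the vertical translates $w-\sup_M w$ and $w-\inf_M w$ are respectively a subsolution and a supersolution of \eqref{HJ-l}; the comparison principle of Proposition \ref{comp-l} then yields a bound $\|u_\lambda\|_\infty\le C$ independent of $\lambda$. Consequently $\|\lambda\alpha u_\lambda\|_\infty\le\lambda\|\alpha\|_\infty C$ is bounded, so at points of differentiability $H(x,D_xu_\lambda)\le c_H+\lambda\|\alpha\|_\infty C$, and coercivity (H2) makes the family $(u_\lambda)$ equi-Lipschitz. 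By Arzel\`a--Ascoli it is precompact in $C(M)$, and since $\lambda\alpha u_\lambda\to0$ uniformly, stability of viscosity solutions shows that every uniform limit along a sequence $\lambda_n\to0$ solves $H(x,D_xu_0)=c_H$. Everything then reduces to proving uniqueness of the limit.

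\emph{The candidate limit and the upper bound.} Let $L$ be the Lagrangian conjugate to $H$, and define $u_0$ to be the largest viscosity subsolution $w$ of \eqref{HJ-crit} satisfying $\int_M\alpha\,w\,d(\pi_\#\tilde\mu)\le0$ for every Mather measure $\tilde\mu$. Fix a subsequential limit $\bar u$ and a Mather measure $\tilde\mu$. At the points of $\mathrm{supp}\,\tilde\mu\subset\A$ the subsolution property of $u_\lambda$ together with the Fenchel inequality gives $D_xu_\lambda\cdot v\le L(x,v)+c_H-\lambda\alpha(x)u_\lambda(x)$; integrating against $\tilde\mu$ and using that $\tilde\mu$ is closed (so $\int D_xu_\lambda\cdot v\,d\tilde\mu=0$) and action-minimizing ($\int L\,d\tilde\mu=-c_H$) leaves $\int_M\alpha u_\lambda\,d(\pi_\#\tilde\mu)\le0$. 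Passing to the limit along $\lambda_n$ gives $\int_M\alpha\bar u\,d(\pi_\#\tilde\mu)\le0$, so $\bar u$ is admissible and hence $\bar u\le u_0$.

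\emph{The lower bound.} Fix an admissible $w$ and a point $x$, and let $\gamma_\lambda$ be an optimal curve for $u_\lambda(x)$ in the Lax--Oleinik representation of Theorem \ref{representation-l}. Setting $w_\lambda(s)=\exp\!\big(-\lambda\int_s^0\alpha(\gamma_\lambda(r))\,dr\big)$ and applying the Fenchel inequality to the critical subsolution $w$ along $\gamma_\lambda$, a one--line integration yields $u_\lambda(x)\ge w(x)-\int_M w\,d\mu_\lambda$, where $\mu_\lambda$ is the image under $\gamma_\lambda$ of the measure $\lambda\alpha(\gamma_\lambda(s))w_\lambda(s)\,ds$, a probability measure (its mass is $1$ by a direct computation). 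Lifting to $TM$ by $s\mapsto(\gamma_\lambda(s),\dot\gamma_\lambda(s))$ and using superlinearity (H2') to control velocities of minimizers, the lifted measures $\tilde\mu_\lambda$ are tight; extract $\tilde\mu_\lambda\to\tilde\mu$. Because the density $\lambda\alpha(\gamma_\lambda)w_\lambda$ is $\alpha$ times a factor varying only on the slow scale $1/\lambda$, the limit takes the form $\tilde\mu=(\alpha\circ\pi)\,\tilde\nu\big/\!\int_M\alpha\,d(\pi_\#\tilde\nu)$ for a Mather measure $\tilde\nu$, whence $\int_M w\,d(\pi_\#\tilde\mu)=\int_M\alpha w\,d(\pi_\#\tilde\nu)\big/\!\int_M\alpha\,d(\pi_\#\tilde\nu)\le0$ by admissibility of $w$. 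Therefore $\liminf_{\lambda\to0}u_\lambda(x)\ge w(x)$; taking the supremum over admissible $w$ gives $\liminf u_\lambda\ge u_0$, and together with the upper bound this forces $\bar u=u_0$ for every subsequential limit, i.e.\ uniform convergence of the whole family.

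\emph{Main obstacle.} The delicate point is the identification of $\tilde\nu$ as a genuine Mather measure, that is, the assertion that the occupation measures charge only the Aubry set in the limit. Since $\alpha$ vanishes off $\A$, a minimizing trajectory may a priori make long excursions into $\{\alpha=0\}$ at no discounted cost, so its occupation mass need not concentrate on $\A$. Excluding this requires quantitative bounds on the duration and action of such excursions, which is exactly the content of Proposition \ref{excursion}; this is where the positivity hypothesis ($\alpha$2) is essential, and it is the technical heart of the argument.
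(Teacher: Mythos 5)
Your proposal is correct and follows essentially the same route as the paper: the same candidate limit $u_0$ (maximal critical subsolution with $\int \alpha u \, d\mu \leqslant 0$ for every Mather measure $\mu$), the same constraint argument for the upper bound (the paper's Proposition \ref{constraints}), the same lower-bound inequality obtained by integrating a critical subsolution along the optimal curves of Theorem \ref{representation-l} against discounted occupation measures (the paper's Lemma \ref{ineglambda}), and the correct identification of Proposition \ref{excursion} as the technical heart. The only deviations are cosmetic or elided technicalities: you fold the weight $\lambda\alpha\exp(\lambda A_{\gamma_{\lambda,x}})$ into a single probability measure where the paper keeps the unweighted occupation measure $\mu_x^\lambda$ and separately proves that its limits are Mather measures (Proposition \ref{mesure-lim}) and that the scalar $\lambda/C_{\lambda,x}$ stays bounded (Lemma \ref{pallebis}) -- which is exactly what rigorously justifies your ``slow scale'' identification of the weighted limit -- and, since closed measures are only tested against $C^1$ functions, both of your Fenchel-inequality steps applied to the merely Lipschitz $u_\lambda$ and $w$ require the preliminary smoothing of Proposition \ref{approx}, as in the paper.
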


Following from the proof we actually will establish two characterizations of the limit function $u_0$, the first one in terms of critical subsolutions and Mather measures (Definition \ref{limit}). The second one involves Mather measures and Peierl's barrier (Definition \ref{limit2}).

\section{Generalities on Hamilton--Jacobi equations} \label{generality}

Equations \eqref{HJ-l} and \eqref{HJ-crit} fall into the scope of general Hamilton--Jacobi equations of the type 
\begin{equation}\label{HJgen}
G\big(x,D_xu,u(x)\big) = 0, \quad \quad x \in M , 
\end{equation}
where $G : T^*M \times \R \to \R$ is continuous, convex in the last two variables,  coercive in the second variable and non--decreasing in the last variable. Unless explicitly stated otherwise, we assume in this section that $G$ verifies the previous hypotheses. Let us state some well known facts about those. The general references for this section are \cite{bardi,barles,Fa,Sic}. Note that many results below are not stated with optimal hypotheses but they will suffice our needs.

\begin{df}\rm
A continuous function $u : M\to \R$ is a 
\begin{itemize}
\item viscosity subsolution to \eqref{HJgen} if for all  $C^1$ function $\varphi : M\to \R$ and $x\in \R$ such that $u-\varphi$ has a local maximum at $x$, $G\big(x,D_x\varphi,u(x)\big) \leqslant  0$;
\item  viscosity supersolution to \eqref{HJgen} if for all  $C^1$ function $\varphi : M\to \R$ and $x\in \R$ such that $u-\varphi$ has a local minimum at $x$, $G\big(x,D_x\varphi,u(x)\big) \geqslant  0$;
\item viscosity solution if it is both a viscosity subsolution and supersolution.
\end{itemize}
\end{df}

In this article, even if omitted, all functions considered will be continuous and all subsolutions, supersolutions or solutions will always be in the viscosity sense as above, hence the adjective will sometimes be omitted.

The next Proposition is referred to as stability in viscosity theory:
\begin{prop}\label{stab}
A uniform limit of subsolutions (resp. supersolutions, solutions) of \eqref{HJgen} is a subsolution  (resp. supersolutions, solutions) of \eqref{HJgen}.
\end{prop}

It is also worth noticing that for $C^1$ functions, it is equivalent to be subsolution, supersolution, solution in the viscosity sense or in the classical sense. 

We start by an important consequence of the coercivity assumption (\cite[Lemme 2.5]{barles}). Note that the proof being local, it adapts to our manifold setting in a straightforward manner (see also \cite[Theorem 7.5.2]{Fa}).

\begin{prop}\label{lip}
Let $u : M\to \R$ be a subsolution of \eqref{HJgen}. Then $u$ is automatically Lipschitz. A Lipschitz constant of $u$ is given by
$$K=\max\big \{ \|p_x\| , \ \ (x,p,r)\in T^*M\times[-\|u\|_\infty, \|u\|_\infty] \cap G^{-1}\big( (-\infty,0)\big)\big\}.$$
\end{prop}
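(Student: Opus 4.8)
The plan is to prove the estimate directly from the definition of subsolution, using the Riemannian distance to $y_0$ as a comparison function; this intrinsic choice has the pleasant feature of producing exactly the constant $K$ with no chart-dependent loss.

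First I would check that $K$ is finite, so that the statement is meaningful. The coercivity of $G$ in its second variable, combined with the compactness of $M$, the compactness of the interval $[-\|u\|_\infty,\|u\|_\infty]$, and the continuity of $G$, forces the set $\{(x,p,r)\in T^*M\times[-\|u\|_\infty,\|u\|_\infty] : G(x,p,r)\le 0\}$ to be bounded in the cotangent fibers, uniformly in $x$ and $r$. Hence the supremum defining $K$ is in fact a maximum attained on a compact set, and $K<\infty$.

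The core of the argument is a one-sided comparison. Fix $y_0\in M$ and $\eps>0$, and consider $\Phi(x)=u(x)-(K+\eps)\,d(x,y_0)$. As $M$ is compact and $\Phi$ continuous, $\Phi$ attains its maximum at some $x_0\in M$. I claim that $x_0=y_0$; granting this, $\Phi(x)\le\Phi(y_0)$ reads $u(x)-u(y_0)\le(K+\eps)\,d(x,y_0)$ for every $x\in M$, and after exchanging the roles of $x$ and $y_0$ and letting $\eps\to0$ this shows that $u$ is $K$-Lipschitz.

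To prove $x_0=y_0$, suppose instead $x_0\ne y_0$ and derive a contradiction. If $d(\cdot,y_0)$ were smooth near $x_0$ the conclusion would be immediate: the function $\varphi=(K+\eps)\,d(\cdot,y_0)$ would be a smooth test function touching $u$ from above at $x_0$, with gradient of norm exactly $K+\eps$ since the distance has unit gradient where it is differentiable; the subsolution inequality would then give $G\big(x_0,D_{x_0}\varphi,u(x_0)\big)\le 0$ with $u(x_0)\in[-\|u\|_\infty,\|u\|_\infty]$, whence $K+\eps=\|D_{x_0}\varphi\|_{x_0}\le K$ by definition of $K$, which is absurd. The hard part is precisely that $x_0$ may lie in the cut locus of $y_0$, where $d(\cdot,y_0)$ fails to be differentiable. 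I would get around this with a smooth support from above produced by the triangle inequality: choose a point $y_1$ on a minimizing geodesic from $y_0$ to $x_0$, close enough to $x_0$ that $d(y_1,\cdot)$ is smooth near $x_0$ (i.e. within the injectivity radius), and set $\tilde\varphi(x)=(K+\eps)\big(d(y_0,y_1)+d(y_1,x)\big)$. The triangle inequality gives $\tilde\varphi\ge(K+\eps)\,d(\cdot,y_0)$ with equality at $x_0$, so $u-\tilde\varphi$ still has a local maximum at $x_0$; moreover $\tilde\varphi$ is smooth near $x_0$ with $\|D_{x_0}\tilde\varphi\|_{x_0}=K+\eps$. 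Applying the subsolution inequality to $\tilde\varphi$ yields the same contradiction, completing the proof. (Alternatively one may run the classical local argument in charts, as the cited references do; the intrinsic version above is just a convenient way to recover the sharp constant.)
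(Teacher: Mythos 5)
Your proof is correct, but note that the paper itself does not prove Proposition \ref{lip}: it invokes \cite[Lemme 2.5]{barles} (see also \cite[Theorem 7.5.2]{Fa}), observing that the classical argument, being local, transfers to the manifold setting via charts. That classical proof rests on the same idea as yours --- compare $u$ with a cone $y\mapsto C\,|y-x_0|$ in a chart and use coercivity to force the maximum of the difference to sit at the vertex --- but there the Euclidean norm is smooth away from the origin, so no differentiability issue arises; the price is a chart-by-chart argument and, a priori, a constant depending on the atlas. Your intrinsic version buys exactly the sharp global constant $K$, and the new difficulty it must face is the non-smoothness of the Riemannian distance at the cut locus, which you dispose of correctly with Calabi's support trick: taking $y_1$ an interior point of a minimizing geodesic, within the injectivity radius of the compact manifold $M$, the function $d(y_1,\cdot)$ is smooth near $x_0$ and $(K+\varepsilon)\big(d(y_0,y_1)+d(y_1,\cdot)\big)$ supports $(K+\varepsilon)d(y_0,\cdot)$ from above with contact at $x_0$. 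Two small repairs are in order. First, your contradiction step needs the implication that $G(x_0,p,r)\leqslant 0$ and $|r|\leqslant\|u\|_\infty$ force $\|p\|_{x_0}\leqslant K$, so $K$ must be the maximum over the closed sublevel set $G^{-1}\big((-\infty,0]\big)$, as your finiteness check implicitly assumes; with the open sublevel set as literally written in the statement this implication can fail (the set could even be empty), but that is a defect of the statement rather than of your argument. Second, the viscosity definition requires a test function that is $C^1$ on all of $M$, while your $\tilde\varphi$ is only smooth near $x_0$; simply replace it by any global $C^1$ function coinciding with $\tilde\varphi$ in a neighborhood of $x_0$, which changes neither the local maximum property nor the differential at $x_0$.
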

As $G$ is also convex, we derive the following classical consequence (remembering that by Rademacher's theorem, Lipschitz functions are automatically differentiable almost everywhere):
\begin{prop}\label{aesub}
Let $u:M\to \R $ be a continuous function, then the following assertions are equivalent:
\begin{itemize}
\item $u$ is a subsolution of \eqref{HJgen};
\item $ u$ is Lipschitz and verifies the inequality $G\big(x,D_xu,u(x)\big) \leqslant 0$ for almost every $x\in M$.
\end{itemize}

\end{prop}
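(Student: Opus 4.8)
The plan is to prove the two implications separately, with the convexity of $G$ entering only in the second, harder direction; the first implication uses only coercivity (through Proposition~\ref{lip}) and the definition of viscosity subsolution. Assume first that $u$ is a subsolution. By Proposition~\ref{lip} it is Lipschitz, hence by Rademacher's theorem differentiable at almost every $x\in M$. At any such point the differential $D_xu$ can be realized as the differential at $x$ of a $C^1$ function $\varphi$ for which $u-\varphi$ attains a local maximum at $x$ (a standard property of differentiability points), so the subsolution inequality yields $G(x,D_xu,u(x))\le 0$. Since this holds off a set of measure zero, $u$ satisfies the stated inequality almost everywhere.

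For the converse, assume $u$ is Lipschitz with $G(x,D_xu,u(x))\le 0$ for a.e.\ $x$, and let me show $u$ is a subsolution. As being a subsolution is a local property and $M$ is covered by charts, I may work in a fixed coordinate ball $B\subset\R^N$ and mollify: set $u_\eps=u*\rho_\eps$ for a standard mollifier $\rho_\eps$, defined on a slightly smaller ball. Then $u_\eps$ is smooth, $u_\eps\to u$ uniformly, and the family is equi-Lipschitz, so that the pairs $(Du_\eps(x),u_\eps(x))$ and $(Du(x-y),u(x-y))$ all range in a fixed compact set $K=\overline B_R\times[-\|u\|_\infty,\|u\|_\infty]$, where $R$ is the Lipschitz constant of $u$.

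The key step invokes convexity. Writing $(Du_\eps(x),u_\eps(x))=\int (Du(x-y),u(x-y))\,\rho_\eps(y)\,dy$ as an average and applying Jensen's inequality to the map $(p,r)\mapsto G(x,p,r)$, which is convex in its last two variables by hypothesis, I get
\[
G\big(x,Du_\eps(x),u_\eps(x)\big)\le \int G\big(x,Du(x-y),u(x-y)\big)\,\rho_\eps(y)\,dy.
\]
Let $\omega$ be a modulus of uniform continuity for $G$ on $\overline B\times K$. Bounding $G(x,\cdot,\cdot)\le G(x-y,\cdot,\cdot)+\omega(\eps)$ on the support of $\rho_\eps$, and using that the almost-everywhere inequality gives $G(x-y,Du(x-y),u(x-y))\le 0$ for a.e.\ $y$, I obtain $G(x,Du_\eps(x),u_\eps(x))\le \omega(\eps)$ for every $x$. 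Hence $u_\eps$ is a smooth, thus classical, thus viscosity subsolution of the perturbed equation $G(x,D_xu,u)=\omega(\eps)$.

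It remains to let $\eps\to 0$. Since $u_\eps\to u$ uniformly and $\omega(\eps)\to 0$, the conclusion follows from a stability argument: this is Proposition~\ref{stab} up to the harmless modification of allowing the right-hand side to vary by a vanishing constant, and the standard proof of stability applies verbatim to this perturbation. I expect this converse implication to be the only delicate point, specifically the interplay between Jensen's inequality and the uniform continuity used to absorb the $x$-dependence of $G$; the first implication is routine.
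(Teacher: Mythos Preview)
Your proof is correct, and both implications are handled in the standard way. The paper does not actually give a proof of this proposition: it is stated as a ``classical consequence'' of convexity (with a parenthetical nod to Rademacher's theorem), and the reader is implicitly referred to the general references \cite{bardi,barles,Fa,Sic} cited at the start of the section. Your argument---Rademacher plus the fact that at a differentiability point the gradient always lies in the viscosity superdifferential for the forward direction, and mollification combined with Jensen's inequality in $(p,r)$, a modulus-of-continuity estimate in $x$, and stability for the converse---is precisely the standard proof one finds in those references, so there is nothing to compare.

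One small remark: the fact you invoke in the first implication, that at a point of differentiability one can always find a $C^1$ test function touching from above with the correct gradient, is indeed true but is itself a (short) lemma rather than an immediate consequence of the definition; it is proved, for instance, in \cite[Chapter~II, Lemma~1.8]{bardi}. Since you flagged it as ``a standard property of differentiability points'' this is not a gap, just something worth citing explicitly.
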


Let us now state some general properties on sub and supersolutions of \eqref{HJgen}. The first item is general, the last two rely on our convexity hypothesis.

\begin{prop}\label{viscositygen}
The following properties hold:
\begin{enumerate}
\item If $u\in C^0(M,\R)$ is a pointwise supremum (resp. infimum) of a family of subsolutions (resp. supersolutions) of \eqref{HJgen}, then $u$ is a subsolution (resp. supersolution) to \eqref{HJgen}.
\item If $u$ is a pointwise infimum of a family of equi--Lipschitz subsolutions of \eqref{HJgen}, then $u$ is itself a subsolution of \eqref{HJgen}.
\item If $u$ is a convex combination of a family of equi--Lipschitz subsolutions of \eqref{HJgen}, then $u$ is itself a subsolution of \eqref{HJgen}.

\end{enumerate}

\end{prop}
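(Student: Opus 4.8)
The plan is to treat item (1) by a direct test-function argument, and items (2) and (3) through the almost-everywhere characterization of subsolutions in Proposition \ref{aesub}, which is exactly where the convexity of $G$ enters.

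For item (1), let $u=\sup_i u_i$ be continuous and let $\varphi\in C^1$ be such that $u-\varphi$ has a local maximum at $x_0$; adding a term such as $d(\cdot,x_0)^2$ to $\varphi$ (which does not change its differential at $x_0$) I may assume the maximum is strict on a closed ball $\bar B(x_0,\rho)$ and that $u(x_0)=\varphi(x_0)$. For each $n$ I pick an index $i_n$ with $u_{i_n}(x_0)>u(x_0)-1/n$ and a maximizer $y_n$ of $u_{i_n}-\varphi$ over $\bar B(x_0,\rho)$. Using $u_{i_n}\le u$ together with the choice of $i_n$ gives $-1/n<u_{i_n}(y_n)-\varphi(y_n)\le u(y_n)-\varphi(y_n)\le 0$; the squeeze forces $u_{i_n}(y_n)-\varphi(y_n)\to 0$, and since $u-\varphi$ vanishes on $\bar B(x_0,\rho)$ only at $x_0$, every limit point of $(y_n)$ equals $x_0$, so $y_n\to x_0$ and $u_{i_n}(y_n)\to u(x_0)$. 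For $n$ large $y_n$ is interior, hence a genuine local maximum of $u_{i_n}-\varphi$, and the subsolution property of $u_{i_n}$ gives $G\big(y_n,D_{y_n}\varphi,u_{i_n}(y_n)\big)\le 0$; letting $n\to\infty$ and using the continuity of $G$ and of $D\varphi$ yields $G\big(x_0,D_{x_0}\varphi,u(x_0)\big)\le 0$. The infimum/supersolution statement is entirely symmetric. No convexity is used here, consistent with this being the ``general'' item.

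For items (2) and (3) I would rely on Proposition \ref{aesub}: under convexity, a continuous function is a subsolution if and only if it is Lipschitz and satisfies $G\big(x,D_xu,u(x)\big)\le 0$ for a.e. $x$. In both items the function is Lipschitz, since a convex combination or an infimum of an equi--Lipschitz family with common constant $L$ is again $L$-Lipschitz, hence differentiable a.e. by Rademacher. For item (3) with a finite convex combination $u=\sum_k\theta_k u_k$, at a.e. $x$ all the $u_k$ and $u$ are differentiable and $D_xu=\sum_k\theta_k D_xu_k$; the joint convexity of $G$ in its last two variables then gives, since each $u_k$ satisfies the a.e. inequality,
$$ G\Big(x,\textstyle\sum_k\theta_k D_xu_k,\ \sum_k\theta_k u_k(x)\Big)\ \le\ \sum_k\theta_k\, G\big(x,D_xu_k,u_k(x)\big)\ \le\ 0, $$
so $u$ is a subsolution by Proposition \ref{aesub}; an arbitrary convex combination is a uniform limit of finite ones and is handled through the stability Proposition \ref{stab}.

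For item (2) the same a.e. strategy applies but requires care because the infimum may be over an infinite family. I would first treat a finite minimum $v=\min_j u_{i_j}$: at a.e. $x$ all functions involved are differentiable, and for any index $j$ realizing the minimum at $x$ one has $v\le u_{i_j}$ with equality at $x$, whence $D_xv=D_xu_{i_j}$ and $G\big(x,D_xv,v(x)\big)=G\big(x,D_xu_{i_j},u_{i_j}(x)\big)\le 0$; by Proposition \ref{aesub} (this is the step that genuinely uses convexity, the a.e. inequality not sufficing without it) $v$ is a subsolution. I would then observe that, thanks to equi--Lipschitzness and the compactness of $M$, the full infimum $u=\inf_i u_i$ is a uniform limit of such finite minima: covering $M$ by finitely many small balls and selecting in each an index nearly realizing the infimum at the center produces a finite minimum within any prescribed $\eps$ of $u$ in sup norm. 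Stability (Proposition \ref{stab}) then promotes $u$ to a subsolution. The main obstacle is precisely this passage to infinite families, and, more fundamentally, the role of convexity: without it the a.e. inequality still holds but the viscosity subsolution property fails (as for $G=1-\|p\|_x$ with $u_i=\pm x$), so the argument must be routed through Proposition \ref{aesub} rather than manipulate test functions directly.
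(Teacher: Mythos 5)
The paper never proves Proposition \ref{viscositygen}: it is stated as a folklore fact, with the general references of Section \ref{generality} standing in for a proof, so there is no proof of record to compare yours against; what can be checked is whether your argument is correct and consistent with the paper's toolkit, and it is. For item (1), your perturbed test-function argument (replacing $\varphi$ by $\varphi+d(\cdot,x_0)^2$ to make the maximum strict, choosing indices $i_n$ nearly optimal at $x_0$, and showing the maximizers $y_n$ of $u_{i_n}-\varphi$ over $\bar B(x_0,\rho)$ satisfy $y_n\to x_0$ and $u_{i_n}(y_n)\to u(x_0)$) is the classical one, and you rightly note it needs no convexity; the only caveat is that $d(\cdot,x_0)^2$ is smooth only near $x_0$, which is harmless since the whole argument is local. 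For items (2) and (3), routing everything through Proposition \ref{aesub} is exactly where the convexity of $G$ in $(p,r)$ enters: your observation that at a.e.\ point a finite minimum inherits the differential of any function realizing it, your use of Jensen's inequality for finite convex combinations, and the counterexample $G=1-\|p\|$, $u_\pm =\pm x$, $\min=-|x|$ showing item (2) fails without convexity, are all correct. The passage from finite to arbitrary families is the only delicate step, and your solution (equi-Lipschitzness plus compactness of $M$ gives uniform approximation of the infimum by finite minima, then Proposition \ref{stab}) is sound; note only that an infimum of an equi-Lipschitz family must be assumed finite somewhere (it is then automatically Lipschitz), and that for item (3) the claim that an arbitrary convex combination is a uniform limit of finite ones implicitly requires the index dependence to be measurable or continuous enough, which holds in the paper's actual use of this item (Lemma \ref{critsub}, where $y\mapsto h(y,\cdot)$ is Lipschitz, hence continuous into $C^0(M,\R)$).
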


Here is an approximation theorem that we will use repeatedly. Its proof follows from standard mollification technics. We will see how to obtain it from similar known results.

\begin{prop}\label{approx}
Let $G : T^*M\times \R \to \R$ be continuous, convex in $p$ and let $u : M\to \R$ be a Lipschitz subsolution of \eqref{HJgen}. Then for all $\varepsilon>0$ there exists a smooth $u_\varepsilon : M\to \R$ such that  $G\big(x,D_xu_\varepsilon,u_\varepsilon(x)\big) \leqslant \varepsilon$ and $\|u-u_\varepsilon\|_\infty < \varepsilon$.
\end{prop}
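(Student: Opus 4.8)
The plan is to obtain $u_\varepsilon$ by mollification, exploiting the convexity of $G$ in the momentum variable through Jensen's inequality, and to globalize the local Euclidean construction by a partition of unity. First I would invoke Proposition \ref{aesub} to replace the viscosity formulation by the pointwise one: $u$ is Lipschitz with some constant $L$ and satisfies $G(x, D_x u, u(x)) \le 0$ for almost every $x$. Since $M$ is compact and $u$ is $L$-Lipschitz and bounded, the arguments $(x, D_x u(x), u(x))$ all lie in the compact set $\{(x,p) : \|p\|_x \le L\} \times [-\|u\|_\infty, \|u\|_\infty]$, on which $G$ is uniformly continuous; let $\omega$ denote a modulus of continuity for $G$ there. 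The whole argument then consists in showing that a suitable smoothing $u_\delta$ of $u$ keeps $(x, D_x u_\delta(x), u_\delta(x))$ in a slightly larger compact set and that the defect in the inequality is bounded by $\omega$ of a quantity tending to $0$ with $\delta$.

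For the local heart of the argument I would fix a finite atlas $(U_i, \phi_i)$ and work in each chart, where $u$ becomes an $L'$-Lipschitz function on an open subset of $\R^n$. Mollifying by a standard kernel $\rho_\delta$ produces a smooth $u^i_\delta$ with $\|u^i_\delta - u\|_\infty \le L'\delta$, and, crucially, $D u^i_\delta(x) = \int \rho_\delta(y)\, D u(x-y)\, dy$ is an average of gradients at which $u$ is almost everywhere a subsolution. Fixing $x$ and writing the third slot as $r = u^i_\delta(x)$, the convexity of $G$ in $p$ (which is intrinsic on the fibre $T_x^*M$ and hence preserved by the linear chart changes) together with Jensen's inequality gives $G(x, Du^i_\delta(x), r) \le \int \rho_\delta(y)\, G(x, Du(x-y), r)\, dy$. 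Comparing each integrand with $G(x-y, Du(x-y), u(x-y)) \le 0$ and bounding the shift in the first and third variables by $|y| \le \delta$ and by $|r - u(x-y)| \le 2L'\delta$ respectively, uniform continuity yields $G(x, Du^i_\delta(x), u^i_\delta(x)) \le \omega(C\delta)$.

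It remains to glue these local smoothings into a single global function, and this is the step I expect to be the main obstacle. I would set $u_\delta := \sum_i \chi_i\, u^i_\delta$ for a partition of unity $(\chi_i)$ subordinate to the $U_i$. The difficulty is that differentiating introduces the cross terms $\sum_i (D\chi_i)\, u^i_\delta$, which a priori need not be small; the observation that rescues the argument is that $\sum_i D\chi_i = D(\sum_i \chi_i) = 0$, so these terms equal $\sum_i (D\chi_i)(u^i_\delta - u)$ and are therefore bounded by $L'\delta \sum_i \|D\chi_i\|_\infty = O(\delta)$. Hence $D_x u_\delta$ is, up to an $O(\delta)$ error, the convex combination $\sum_i \chi_i\, Du^i_\delta$, and applying convexity of $G$ in $p$ once more with the local bound gives $G(x, \sum_i \chi_i Du^i_\delta, r) \le \sum_i \chi_i\, G(x, Du^i_\delta, r) \le \omega(C\delta)$. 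Absorbing the $O(\delta)$ momentum error and the discrepancy between $r$ and $u_\delta(x)$ into the modulus $\omega$, I obtain $G(x, D_x u_\delta, u_\delta(x)) \le \omega(C'\delta)$ everywhere, while $\|u_\delta - u\|_\infty \le L'\delta$. Choosing $\delta$ small enough that both bounds fall below $\varepsilon$ then produces the desired $u_\varepsilon := u_\delta$.
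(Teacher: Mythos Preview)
Your argument is correct and carries out the standard mollification-plus-partition-of-unity construction from scratch, handling the three slots of $G$ simultaneously via Jensen's inequality and uniform continuity. The paper takes a shorter, more modular route: it freezes the third slot by introducing the auxiliary Hamiltonian $F(x,p)=G\big(x,p,u(x)\big)$, which is continuous and convex in $p$, and then invokes an existing smoothing theorem for such Hamiltonians (\cite[Theorem 8.1]{fatmad}, see also \cite{DFIZ,FS05}) to obtain a smooth $2K$-Lipschitz $v_\eta$ with $\|u-v_\eta\|_\infty<\eta$ and $F(x,D_xv_\eta)<\eta$. The remaining passage from $G\big(x,D_xv_\eta,u(x)\big)<\eta$ to $G\big(x,D_xv_\eta,v_\eta(x)\big)\leqslant\varepsilon$ is a single appeal to the uniform continuity of $G$ on the compact set $\{\|p\|_x\leqslant 2K\}\times[-\|u\|_\infty-1,\|u\|_\infty+1]$. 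Your approach has the virtue of being self-contained and of making explicit exactly where convexity (Jensen, applied once for the local mollification and once for the gluing) and uniform continuity enter; the paper's approach cleanly separates the genuine smoothing step, delegated as a black box, from the handling of the $u$-dependence, which becomes a one-line perturbation.
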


\begin{proof}
Let $\varepsilon>0$. Let $K>0$ be a Lipschitz constant for $u$. If $\eta>0$ we may apply \cite[Theorem 8.1]{fatmad}, see also \cite[Lemma 2.2]{DFIZ} and \cite{FS05}, to the Hamiltonian $F(x,p)=G\big(x,p,u(x)\big)$ to obtain a function $v_\eta : M\to \R$ that is smooth, $2K$--Lipschitz, such that $\|u-v_\eta\|_\infty <\eta$ and $F(x,D_xv_\eta)<\eta$ for all $x\in M$.  

It now follows from the uniform continuity of $G$ on  the compact set $\{(x,p,r)\in T^*M\times \R , \ \ \|p\|_x\leqslant 2K ,\ \ |r|\leqslant \|u\|_\infty +1\}$ that for $\eta$ small enough, the function $v_\eta$ verifies the requirements of the proposition. 
\end{proof}

We conclude by the most important result we will need which is a strong comparison principle. It is rather folklore (see \cite[Theorem 2.7]{barles} and the following discussion). Let us however provide elements of proof for the reader's convenience:
\begin{Th}\label{compG}
Let $G : T^*M \times \R \to \R$ be continuous, convex in the last two variables,  coercive in the second variable and non--decreasing in the last variable. Assume that there exists a $C^1$ function $\varphi : M\to \R$ such that 
$G\big(x,D_x\varphi ,\varphi(x)\big) < 0$, for all $x\in M$. Let $u : M\to \R$ and $v: M\to \R$ be respectively a subsolution and a supersolution of \eqref{HJgen}. Then $u\leqslant v$. 
\end{Th}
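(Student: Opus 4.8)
The plan is to prove this strong comparison principle via the standard ``strict subsolution regularization'' trick, which is exactly why the hypothesis on the existence of $\varphi$ is included. The key idea is that direct comparison of $u$ and $v$ is hard because the equation is not strictly monotone in the last variable; we fix this by perturbing $u$ toward the strict subsolution $\varphi$ to gain a quantitative strict inequality.

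First I would reduce to the case where the subsolution is itself strict. For $\theta\in(0,1)$, set $u_\theta = (1-\theta)u + \theta\varphi$. By convexity of $G$ in the last two variables (and since $\varphi$ is $C^1$, hence a classical, thus viscosity, subsolution of $G(x,D_x\varphi,\varphi)<0\le 0$), Proposition \ref{viscositygen}(3) applied to the equi-Lipschitz family $\{u,\varphi\}$ shows $u_\theta$ is a subsolution. More importantly, I would check that $u_\theta$ is a \emph{strict} subsolution: at a.e.\ $x$ one has, using convexity,
$$G\big(x,D_xu_\theta,u_\theta(x)\big)\le (1-\theta)\,G\big(x,D_xu,u(x)\big)+\theta\,G\big(x,D_x\varphi,\varphi(x)\big)\le \theta\,G\big(x,D_x\varphi,\varphi(x)\big)\le -\theta\,\delta,$$
where $\delta=-\max_M G(x,D_x\varphi,\varphi)>0$ by compactness of $M$ and continuity. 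By Proposition \ref{aesub} this a.e.\ inequality upgrades to the viscosity strict subsolution property. Since $u_\theta\to u$ uniformly as $\theta\to 0$, it suffices to show $u_\theta\le v$ for every $\theta$, and then let $\theta\to 0$.

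Next I would run the doubling of variables argument to compare the strict subsolution $w:=u_\theta$ with the supersolution $v$. Suppose for contradiction that $\max_M(w-v)=m>0$. For $\eps>0$ consider
$$\Phi_\eps(x,y)=w(x)-v(y)-\frac{1}{2\eps}\,d(x,y)^2,$$
and let $(x_\eps,y_\eps)$ be a maximizer on the compact $M\times M$. Standard arguments give $d(x_\eps,y_\eps)^2/\eps\to 0$ and $\Phi_\eps(x_\eps,y_\eps)\to m$, and after extracting a subsequence $x_\eps,y_\eps\to \bar x$ with $(w-v)(\bar x)=m$. The viscosity inequalities for $w$ (as subsolution) and $v$ (as supersolution), evaluated against the test function $\tfrac{1}{2\eps}d(\cdot,y_\eps)^2$, respectively $-\tfrac{1}{2\eps}d(x_\eps,\cdot)^2$, produce cotangent vectors $p_\eps$ at $x_\eps$ and $q_\eps$ at $y_\eps$ that are close (the difference controlled by $d(x_\eps,y_\eps)/\eps$, which stays bounded by coercivity/Lipschitz estimates) with
$$G\big(x_\eps,p_\eps,w(x_\eps)\big)\le -\theta\delta,\qquad G\big(y_\eps,q_\eps,v(y_\eps)\big)\ge 0.$$
Since $m>0$ we have $w(x_\eps)>v(y_\eps)$ for $\eps$ small, so monotonicity in the last variable gives $G(x_\eps,p_\eps,v(y_\eps))\le G(x_\eps,p_\eps,w(x_\eps))\le-\theta\delta$; passing to the limit using the uniform closeness of $(x_\eps,p_\eps)$ and $(y_\eps,q_\eps)$ and continuity of $G$ yields $-\theta\delta\ge 0$, a contradiction. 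Hence $w\le v$, and letting $\theta\to 0$ gives $u\le v$.

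The main obstacle I expect is the manifold bookkeeping in the doubling argument: on $\R^N$ one differentiates $\tfrac{1}{2\eps}|x-y|^2$ trivially, but on $(M,g)$ one must work with $d(x,y)^2$, whose gradients in $x$ and in $y$ are related through parallel transport along the minimizing geodesic, and one needs the standard fact that the two test gradients become genuinely comparable (not merely bounded) as $\eps\to 0$. This is where the coercivity hypothesis is essential: it forces $p_\eps,q_\eps$ to stay in a fixed compact set (via the Lipschitz bound from Proposition \ref{lip} applied to the strict subsolution), so that continuity of $G$ can be invoked in the limit. The convexity/monotonicity then does the rest, but the geometric estimate relating $p_\eps$ and $q_\eps$ is the delicate technical point, and I would cite the local nature of the computation together with \cite{barles,Fa} to justify that the Euclidean argument carries over.
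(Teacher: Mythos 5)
Your proof is correct, but after the common first step it diverges from the paper's argument. Both proofs begin identically, forming the convex combination $u_\theta=(1-\theta)u+\theta\varphi$ and using joint convexity of $G$ to get a strict subsolution with margin $\theta\delta$. The paper then uses convexity a second time: it applies Proposition \ref{approx} to replace $u_\theta$ by a \emph{smooth} strict subsolution $\tilde u_\varepsilon$, and concludes in one stroke, with no doubling of variables: at a maximum point $x_\varepsilon$ of $\tilde u_\varepsilon-v$, the smooth $\tilde u_\varepsilon$ is itself an admissible test function for the supersolution $v$, so $G\big(x_\varepsilon,D_{x_\varepsilon}\tilde u_\varepsilon,v(x_\varepsilon)\big)\geqslant 0$ while $G\big(x_\varepsilon,D_{x_\varepsilon}\tilde u_\varepsilon,\tilde u_\varepsilon(x_\varepsilon)\big)<0$, and monotonicity in the last variable forces $\tilde u_\varepsilon(x_\varepsilon)\leqslant v(x_\varepsilon)$, hence $\tilde u_\varepsilon\leqslant v$ everywhere. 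Your doubling-of-variables route instead carries the Riemannian overhead you correctly flag (smoothness of $d(\cdot,y)^2$ inside the injectivity radius, covectors related by parallel transport), which the paper's argument avoids entirely; in exchange, your second half needs no convexity at all, and on a compact manifold it closes with mere continuity of $G$, since $(x_\varepsilon,p_\varepsilon)$ and $(y_\varepsilon,q_\varepsilon)$ converge along a subsequence to the same point of a compact subset of $T^*M$, so no structure condition of the form $|G(x,p,r)-G(y,p,r)|\leqslant \omega\big(d(x,y)(1+\|p\|_x)\big)$ is needed. One small imprecision: $p_\varepsilon$ and $q_\varepsilon$ both have norm $d(x_\varepsilon,y_\varepsilon)/\varepsilon$, which is bounded but not small; what tends to zero is their difference after parallel transport along the minimizing geodesic, which is the estimate your final paragraph correctly identifies as the delicate point.
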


\begin{proof}[sketch of proof]

If $\eta\in (0,1)$ we define $u_\eta = (1-\eta)u+\eta \varphi$. Note that thanks to Proposition \ref{lip}, the $u_\eta$ are equi--Lipschitz. We denote by $K>0$ a common Lipschitz constant. We will prove that $u_\eta\leqslant v$ for all $\eta\in (0,1)$ which proves the Theorem letting $\eta \to 0$.

We now fix $\eta\in (0,1)$ until the end of the proof. By compactness, there exists a positive constant $r>0$ such that $G\big(x,D_x\varphi ,\varphi(x)\big) < -r$, for all $x\in M$. Hence, thanks to the convexity hypothesis of $G$, $u_\eta$ is a subsolution of 
$$G\big(x,D_xu_\eta,u_\eta(x)\big) \leqslant -\eta r, \qquad \qquad \qquad x\in M.$$

If $0<\varepsilon <\eta r$ we may apply Proposition \ref{approx} to find a smooth function $\tilde u_\varepsilon$ such that $\|u_\eta - \tilde u_\varepsilon \|_\infty \leqslant \varepsilon$  and 

$$G\big(x,D_x\tilde u_\varepsilon,\tilde u_\varepsilon(x)\big) \leqslant -\eta r+\varepsilon <0, \qquad \qquad \qquad x\in M.$$

Let now $x_\varepsilon \in M$ be a maximum point of $\tilde u_\varepsilon - v$. We may see $\tilde u_\varepsilon$ as a test function for $v$ and the supersolution criterion yields that 
$$G\big(x_\varepsilon, D_{x_\varepsilon} \tilde u_\varepsilon, v(x_\varepsilon)\big) \geqslant 0.$$
Moreover, as $G\big(x_\varepsilon,D_{x_\varepsilon}\tilde u_\varepsilon,\tilde u_\varepsilon(x_\varepsilon)\big) \leqslant -\eta r+\varepsilon <0$, we deduce that $\tilde u_\varepsilon (x_\varepsilon) -v(x_\varepsilon) \leqslant 0$ as $G$ is non--decreasing in the last variable. Finally, by the choice of $x_\varepsilon$ it follows that $\tilde u_\varepsilon\leqslant v$ on $M$ and letting $\varepsilon \to 0$ we conclude that $u_\eta\leqslant v$ hence the Theorem. 

\end{proof}

\section{Classical weak KAM theory}\label{sec:weakKAM}

Good references for this section are \cite{FS05,Fa12}. Much of the present content is also recalled in \cite{DFIZ}.
\subsection{For coercive Hamiltonians}

We here study stationary equations associated to a continuous Hamiltonian function $H : T^*M\to \R$  that verifies hypotheses (H1) and (H2). If $c\in \R$ we consider the Hamilton--Jacobi equation 
\begin{equation}\label{HJs}
H(x,D_x u) = c , \qquad \qquad x\in M.
\end{equation} 

If $u$ is a $C^1$ function then it is a subsolution to \eqref{HJs} as soon as 
$$c\geqslant \max \{H(x,D_x u),\ \ x\in M\}.$$

The equation \eqref{HJs} falls into the scope of the previous section hence all its results apply.

In particular, all subsolutions to \eqref{HJs} are automatically Lipschitz with a common Lipschitz being 
\begin{equation}\label{kappa}
\kappa_c =  \max\big\{ \|p\|_x, \ \ (x,p)\in H^{-1}\big((-\infty, c ) \big)\big\}.
\end{equation}

This leads us to the important notion of critical value:

\begin{df}\rm
The critical value $c_H$ is the smallest constant $c\in \R$ for which \eqref{HJs} admits subsolutions.
\end{df}

It is not hard, using the Ascoli theorem and stability of viscosity subsolutions that such a minimum is actually attained. As already introduced, we will call critical equation the stationary Hamilton--Jacobi equation 
\begin{equation}\label{HJ-crit}
H(x,D_x u) = c_H, \quad \quad x \in M \tag{HJ-crit},
\end{equation}
We will also refer to as critical the subsolutions, supersolutions and solutions to \eqref{HJ-crit}.
Hence in the rest of the paper, a critical solution (resp. critical subsolution, critical supersolution) is a solution (resp. subsolution, supersolution) to \eqref{HJ-crit}.
 Note finally that $c_H$ is the only constant for which \eqref{HJs} admits solutions, called critical solutions or weak KAM solutions.

As the set of subsolutions to \eqref{HJ-crit} is equi--Lipschitz, only the restriction of $H$ to a compact subset of $T^*M$ \big(namely $H^{-1}\big((-\infty,c_H)\big)$\big) is relevant. Hence it is not too restrictive to consider superlinear Hamiltonians, up to modifying the initial coercive Hamiltonian outside of this compact set.

\subsection{For superlinear Hamiltonians}\label{superlinear}

In the rest of this section, we will enforce condition (H2) in condition (H2') assuming that $H$ is moreover superlinear and still verifies (H1). This section is more dynamical in spirit. However as the Hamiltonian is not smooth, there is no Hamiltonian flow at hand and other arguments are required (see also \cite{DS, DavZav} and the appendix in \cite{DZ15}).

We define the Lagrangian $L : TM\to \R$ through the Legendre transform:

$$\forall (x,v)\in TM, \quad L(x,v) = \max_{p\in T^*_xM} p(v)-H(x,p).$$ 
The Lagrangian $L$ enjoys similar properties as $H$. It is continuous and verifies 
\begin{itemize}
\item[(L1)]\label{L1}(Convexity) For every $x\in M$ the function $L(x,\cdot) : T_xM \to \R$ is convex.
\item[(L2')]\label{L2'}(Superlinearity) $L(x,p)/\|v\|_x \to +\infty$ as $\|v\|_x \to +\infty$.
\end{itemize}
Some further properties are given by the proposition below:
\begin{prop}\label{fenchel}
The Hamiltonian and Lagrangian are linked by the following:
\begin{itemize}
\item For all $(x,v)\in TM$ and $p\in T^*_x M$, the inequality $L(x,v)+H(x,p) \geqslant p(v)$ holds and is called the Fenchel inequality.
\item Equality, $L(x,v_0)+H(x,p_0) = p_0(v_0)$, holds  if and only if one of two equivalent properties hold:
\begin{itemize}
\item $p_0\in \partial_v L(x,v_0)$,
\item $v_0\in \partial_p  H(x,p_0)$,
\end{itemize}
where we denote by $\partial_v L(x,v_0)$ \big(resp. $\partial_p  H(x,p_0)$\big) the subdifferential (in the sense of convex analysis) of the convex function $v\mapsto L(x,v)$ at $v_0$ (resp. $p\mapsto H(x,p)$ at $p_0$).
\item The Hamiltonian can be recovered from the Lagrangian through the Legendre transform:
$$\forall (x,p)\in T^*M, \quad H(x,p) = \max_{v\in T_xM} p(v)-L(x,v).$$ 
\end{itemize}
\end{prop}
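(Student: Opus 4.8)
The plan is to prove Proposition \ref{fenchel}, which collects the standard Legendre--Fenchel duality relations between $H$ and $L$. Since $L$ is \emph{defined} by $L(x,v) = \max_{p}\, p(v) - H(x,p)$, everything will follow from convex-analytic facts applied fiberwise, that is, for each fixed $x \in M$, to the pair of convex functions $p \mapsto H(x,p)$ on $T_x^*M$ and $v \mapsto L(x,v)$ on $T_xM$. The hypotheses (H1) and (H2') guarantee that $p\mapsto H(x,p)$ is convex and superlinear, hence proper, lower semicontinuous, and with full domain, so its Fenchel conjugate $L(x,\cdot)$ is again convex and superlinear and the conjugation is involutive.

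First I would establish the Fenchel inequality. This is immediate from the definition of the supremum: for every $(x,v)$ and every $p\in T_x^*M$,
\begin{equation*}
L(x,v) = \max_{q\in T_x^*M} q(v) - H(x,q) \geqslant p(v) - H(x,p),
\end{equation*}
which rearranges to $L(x,v) + H(x,p) \geqslant p(v)$. Next I would treat the equality case. Equality $L(x,v_0) + H(x,p_0) = p_0(v_0)$ says precisely that the maximum defining $L(x,v_0)$ is attained at $p=p_0$, i.e.\ that $p_0$ maximizes $p\mapsto p(v_0) - H(x,p)$. By the standard first-order optimality condition for concave maximization, this is equivalent to $0$ belonging to the subdifferential of $p\mapsto H(x,p) - p(v_0)$ at $p_0$, that is $v_0 \in \partial_p H(x,p_0)$. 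The symmetric characterization $p_0 \in \partial_v L(x,v_0)$ follows from the symmetric role of $H$ and $L$ once biduality is known, or equivalently from the classical convex-analytic theorem stating that for a proper convex lower semicontinuous function $f$ and its conjugate $f^*$ one has $p_0 \in \partial f(v_0) \iff v_0 \in \partial f^*(p_0) \iff f(v_0) + f^*(p_0) = p_0(v_0)$; here $f = L(x,\cdot)$ and $f^* = H(x,\cdot)$.

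The third item, recovering $H$ from $L$ by $H(x,p) = \max_v\, p(v) - L(x,v)$, is the biduality (Fenchel--Moreau) statement $H(x,\cdot) = H(x,\cdot)^{**}$. I would deduce it from the Fenchel--Moreau theorem, whose hypotheses are met because $p\mapsto H(x,p)$ is convex, lower semicontinuous (being continuous), and proper. The ``$\geqslant$'' direction of the formula is in fact just a rewriting of the Fenchel inequality, while the reverse inequality is the content of biduality.

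I do not expect a genuine obstacle here, as the statement is entirely classical. The only point deserving care is to check that the convex-analytic theorems are applied with verified hypotheses in the fiberwise setting, and in particular that superlinearity of $H(x,\cdot)$ forces $L(x,\cdot)$ to be everywhere finite (so that the $\max$ defining $L$ is attained and the subdifferentials are nonempty), and conversely. One should also note the mild abuse of identifying $T_xM$ with its bidual $T_x^{**}M$, which is harmless in finite dimension. Given these checks, the three items follow directly from the definition and standard duality.
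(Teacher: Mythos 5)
Your proof is correct: the Fenchel inequality falls out of the definition of $L$ as a supremum, the equality case is the Fenchel--Young characterization of subdifferentials combined with biduality, and the third item is the Fenchel--Moreau theorem, all applied fiberwise with the relevant hypotheses (convexity, continuity, superlinearity, finite dimension of the fibers) properly verified. The paper itself gives no proof of this proposition --- it is recalled as classical background from its weak KAM references (\cite{FS05,Fa12}) --- and your argument is exactly the standard one those sources provide, including the two points of care you flag (superlinearity forcing finiteness and attainment of the max, and the harmless identification of $T_xM$ with its bidual).
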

For every $t>0$ we define the action functional $h_t : M\times M\to \R$:
$$ h_t(x,y) = \inf\left\{ \int_{-t}^0 \Big[L\big(\gamma (s),\dot\gamma(s)\big) +c_H\Big]ds,\ \gamma\in AC([-t,0],M) , \gamma(-t)=x, \gamma(0)=y \right\}, $$
where $AC([-t,0],M)$ is the set of absolutely continuous curves from $[-t,0]$ to $M$.

Note that the infimum is actually a minimum that is reached by a Lipschitz curve by the Clarke-Vinter Theorem \cite[Theorem 16.18]{Cl13} or \cite{Cl7}.

We recall for later use a simple property of the action functional that follows from its definition:

\begin{prop}\label{inegtr}
The functions $h_t$ verify the following triangular inequality:
$$\forall (x,y,z,t,t')\in M\times M\times M\times (0,+\infty)\times (0,+\infty), \quad h_{t+t'}(x,z)\leqslant h_t(x,y)+h_t(y,z).$$
\end{prop}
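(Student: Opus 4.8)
The plan is to prove the triangular inequality
$$h_{t+t'}(x,z)\leqslant h_t(x,y)+h_{t'}(y,z)$$
by a standard concatenation-of-curves argument, exploiting the fact that the action functional is defined as an infimum over absolutely continuous curves. The key observation is that the action is \emph{additive} under concatenation: if $\gamma_1$ realizes a path from $x$ to $y$ on a time interval of length $t$ and $\gamma_2$ realizes a path from $y$ to $z$ on a time interval of length $t'$, then gluing them at $y$ produces an absolutely continuous curve from $x$ to $z$ on a time interval of length $t+t'$ whose action is exactly the sum of the two actions.

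First I would fix $\eps>0$ and choose, by definition of the infimum, curves $\gamma_1\in AC([-t,0],M)$ with $\gamma_1(-t)=x$, $\gamma_1(0)=y$ and $\gamma_2\in AC([-t',0],M)$ with $\gamma_2(-t')=y$, $\gamma_2(0)=z$, that are $\eps$-almost minimizing, so that their actions are within $\eps$ of $h_t(x,y)$ and $h_{t'}(y,z)$ respectively. (In fact, as noted in the excerpt, the infimum is attained by the Clarke--Vinter theorem, so one may take genuine minimizers and dispense with $\eps$ altogether; I would mention this but the $\eps$-version keeps the argument self-contained.) Next I would define the concatenated curve $\gamma\in AC([-(t+t'),0],M)$ by time-shifting $\gamma_1$ to the interval $[-(t+t'),-t']$ and letting $\gamma$ agree with $\gamma_2$ on $[-t',0]$; explicitly $\gamma(s)=\gamma_1(s+t')$ for $s\in[-(t+t'),-t']$ and $\gamma(s)=\gamma_2(s)$ for $s\in[-t',0]$. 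One checks that the two pieces match at the gluing time $s=-t'$ since both equal $y$ there, so $\gamma$ is absolutely continuous and satisfies the correct endpoint conditions $\gamma(-(t+t'))=x$ and $\gamma(0)=z$.

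Then, by the change of variables $s\mapsto s+t'$ on the first subinterval and the additivity of the integral over $[-(t+t'),-t']\cup[-t',0]$, the action of $\gamma$ splits as
$$\int_{-(t+t')}^0\big[L(\gamma,\dot\gamma)+c_H\big]\,ds=\int_{-t}^0\big[L(\gamma_1,\dot\gamma_1)+c_H\big]\,ds+\int_{-t'}^0\big[L(\gamma_2,\dot\gamma_2)+c_H\big]\,ds.$$
Since $h_{t+t'}(x,z)$ is the infimum over all admissible curves, it is bounded above by the action of this particular $\gamma$, which is at most $h_t(x,y)+h_{t'}(y,z)+2\eps$. Letting $\eps\to0$ yields the claim.

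There is essentially no real obstacle here; the only point requiring a modicum of care is the verification that the concatenated curve is genuinely absolutely continuous (not merely piecewise so) and that its weak derivative is the obvious piecewise-defined velocity, so that the action integral factors as stated. This is routine: absolute continuity is preserved under gluing at a single matching endpoint, and the integral over a union of adjacent intervals is the sum of the integrals. I would therefore present this as a brief verification rather than a substantive difficulty.
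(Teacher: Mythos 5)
Your proof is correct and is exactly the standard concatenation argument the paper intends (the paper omits the proof entirely, stating only that the inequality ``follows from its definition''). Note that you have also silently corrected a typo in the paper's statement: the right-hand side should read $h_t(x,y)+h_{t'}(y,z)$, as in your version, not $h_t(x,y)+h_t(y,z)$.
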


The following characterization hold (see \cite{DavZav, Fa,FS05}):
\begin{prop}\label{sub}
A function $w : M\to \R$ is a critical subsolution of \eqref{HJ-crit} if and only if 
$$\forall (x,y)\in M\times M,\ \forall t>0,\quad w(y) - w(x) \leqslant h_t(x,y).$$
\end{prop}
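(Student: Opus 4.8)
The plan is to prove both implications, and I expect the genuine work to lie in handling the non--smoothness of $w$; the two directions are dual to one another through the Fenchel relations of Proposition \ref{fenchel}.

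First I would treat the direct implication: assume $w$ is a critical subsolution. By Proposition \ref{lip} it is Lipschitz, so Proposition \ref{approx} applies to the Hamiltonian $G(x,p,r)=H(x,p)-c_H$ (convex and coercive in $p$, independent of $r$): for every $\varepsilon>0$ there is a smooth $w_\varepsilon$ with $\|w-w_\varepsilon\|_\infty<\varepsilon$ and $H(x,D_xw_\varepsilon)\leqslant c_H+\varepsilon$ on all of $M$. Fix $t>0$ and $x,y\in M$, and let $\gamma\in AC([-t,0],M)$ be any curve joining $x$ to $y$ (it is harmless, by the Clarke--Vinter remark, to take $\gamma$ Lipschitz). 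Since $w_\varepsilon$ is smooth, $w_\varepsilon\comp\gamma$ is absolutely continuous and the classical chain rule together with the Fenchel inequality of Proposition \ref{fenchel} gives, for a.e. $s$,
$$\frac{d}{ds}w_\varepsilon(\gamma(s)) = D_{\gamma(s)}w_\varepsilon(\dot\gamma(s)) \leqslant L(\gamma(s),\dot\gamma(s)) + H(\gamma(s),D_{\gamma(s)}w_\varepsilon) \leqslant L(\gamma(s),\dot\gamma(s)) + c_H + \varepsilon.$$
Integrating over $[-t,0]$ and taking the infimum over all such $\gamma$ yields $w_\varepsilon(y)-w_\varepsilon(x)\leqslant h_t(x,y)+\varepsilon t$, and letting $\varepsilon\to0$ produces $w(y)-w(x)\leqslant h_t(x,y)$. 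The detour through $w_\varepsilon$ serves precisely to make the chain rule licit, which is the only delicate point: for merely Lipschitz $w$ the identity $\frac{d}{ds}w(\gamma(s))=D_{\gamma(s)}w(\dot\gamma(s))$ may fail on a set of times of positive measure.

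For the converse, assume the domination inequality for all $x,y,t$. I would first record that $w$ is Lipschitz, hence continuous: estimating $h_t(x,y)$ by the action of the unit--speed minimizing geodesic from $x$ to $y$ run over a time interval of length $t=d(x,y)$ gives $h_{d(x,y)}(x,y)\leqslant A\,d(x,y)$ with $A=c_H+\max\{L(z,v):z\in M,\ \|v\|_z=1\}<+\infty$; combined with the hypothesis and with the roles of $x,y$ exchanged this yields $|w(x)-w(y)|\leqslant A\,d(x,y)$. It then remains to check the viscosity subsolution inequality. Let $\varphi$ be $C^1$ and suppose $w-\varphi$ has a local maximum at $x_0$, normalized so that $w(x_0)=\varphi(x_0)$ and $w\leqslant\varphi$ near $x_0$. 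Fix $v\in T_{x_0}M$ and, for small $t>0$, pick a smooth curve $\gamma:[-t,0]\to M$ with $\gamma(0)=x_0$ and $\dot\gamma(0)=v$. Using $w(x_0)=\varphi(x_0)$, $w(\gamma(-t))\leqslant\varphi(\gamma(-t))$, the hypothesis $w(x_0)-w(\gamma(-t))\leqslant h_t(\gamma(-t),x_0)\leqslant\int_{-t}^0[L(\gamma,\dot\gamma)+c_H]\,ds$, and the $C^1$ chain rule for $\varphi$, I obtain
$$\int_{-t}^0 \big[D_{\gamma(s)}\varphi(\dot\gamma(s)) - L(\gamma(s),\dot\gamma(s)) - c_H\big]\,ds \leqslant 0.$$
Dividing by $t$ and letting $t\to0^+$, continuity of the integrand at $s=0$ gives $D_{x_0}\varphi(v)-L(x_0,v)\leqslant c_H$. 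Since $v$ is arbitrary, taking the supremum and invoking the Legendre formula $H(x_0,p)=\max_v\big(p(v)-L(x_0,v)\big)$ of Proposition \ref{fenchel} yields $H(x_0,D_{x_0}\varphi)\leqslant c_H$, so $w$ is a critical subsolution.

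The main obstacle, as flagged, is the failure of the chain rule for a Lipschitz $w$ composed with an absolutely continuous curve in the direct implication, which is what forces the passage through the smooth approximants $w_\varepsilon$ of Proposition \ref{approx}; once that is handled, both implications reduce to the pointwise Fenchel inequality and its equality case, exploited in the two opposite directions.
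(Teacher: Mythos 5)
Your proof is correct. Note that the paper itself does not prove Proposition \ref{sub}; it is quoted as a known result with references (\cite{DavZav,Fa,FS05}), and your argument is essentially the standard one found there: in the direct implication, the passage through the smooth approximants of Proposition \ref{approx} is exactly the right way to circumvent the failure of the chain rule for a merely Lipschitz $w$ along an absolutely continuous curve, and the Fenchel inequality then gives the integral bound; in the converse, the test-curve argument with $\dot\gamma(0)=v$ arbitrary, followed by the Legendre formula $H(x_0,p)=\max_v\big(p(v)-L(x_0,v)\big)$, is the standard derivation of the viscosity inequality, and your preliminary Lipschitz estimate via geodesics correctly handles the fact that continuity of $w$ is not assumed a priori. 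Both directions are complete as written.
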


The Peierls barrier is defined as follows
\begin{df}\label{Peierl}\rm
The Peierls barrier is the function $h : M\times M\to \R$:
$$\forall(x,y)\in M\times M,\quad h(x,y) = \liminf_{t\to+\infty} h_t(x,y).$$
\end{df}

By results of Fathi \cite{Fa1} extended to less regular setting in \cite{DS} the liminf above is actually a limit as soon as $H$ is strictly convex.
Fundamental properties of $h$ are summed up below
\begin{prop}\label{proph}
The Peierls barrier verifies the following properties:
\begin{enumerate}
\item It is finite valued and Lipschitz continuous.
\item If $w:M\to \R$ is a critical subsolution then 
$$\forall (x,y)\in M\times M,\quad w(y) - w(x) \leqslant h(x,y).$$
\item For any $x\in M$, the function $h(x,\cdot)$ is a critical solution.
\item For any $y\in M$, the function $-h(\cdot,y)$ is a critical subsolution.
\end{enumerate}
\end{prop}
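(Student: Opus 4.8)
The plan is to prove the four assertions in the order that exposes their logical dependencies: first the domination statement (2), then the ``one-sided triangle'' inequalities that immediately give the subsolution half of (3) together with (4), then finiteness and Lipschitz regularity (1), and finally the supersolution half of (3), which is the genuine difficulty.

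Assertion (2) is immediate from Proposition \ref{sub}: if $w$ is a critical subsolution then $w(y)-w(x)\le h_t(x,y)$ for every $t>0$, and taking the $\liminf$ as $t\to+\infty$ gives $w(y)-w(x)\le h(x,y)$. Fixing one critical subsolution $w_0$ (which exists by definition of $c_H$, or via Theorem \ref{strict}) already yields the uniform lower bound $h(x,y)\ge w_0(y)-w_0(x)\ge -2\|w_0\|_\infty$. Next I record the two consequences of Proposition \ref{inegtr}. Since a shift of the time parameter by a fixed $s>0$ does not change the $\liminf$ as $t\to+\infty$, passing to the $\liminf$ in $h_{t+s}(x,z)\le h_t(x,y)+h_s(y,z)$ and, symmetrically, in $h_{s+t}(x,y)\le h_s(x,z)+h_t(z,y)$ gives
\[
 h(x,z)\le h(x,y)+h_s(y,z),\qquad h(x,y)\le h_s(x,z)+h(z,y),
\]
for all $s>0$ and all points. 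Read through Proposition \ref{sub}, the first inequality says exactly that $y\mapsto h(x,y)$ is a critical subsolution (the subsolution half of (3)), and the second, taken with $w=-h(\cdot,y)$, gives $w(z)-w(x)=h(x,y)-h(z,y)\le h_s(x,z)$, which is assertion (4).

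Both displayed inequalities presuppose that $h$ is finite valued, so the first real obstacle is the upper bound $h(x,y)<+\infty$. The lower bound being already settled, this is where the meaning of the critical value enters: one must produce, for a suitable sequence of times $t_n\to+\infty$, admissible curves from $x$ to $y$ whose action $\int_{-t_n}^0(L+c_H)$ stays bounded. This is a manifestation of the non-emptiness of the Aubry set (equivalently, of the existence of minimizing curves of arbitrarily long duration and uniformly bounded relative action, along which long ``almost loops'' cost $\approx 0$): concatenating a fixed connector from $x$ to such a recurrent configuration, a long almost-loop, and a fixed connector to $y$ bounds $h_{t_n}(x,y)$, see \cite{Fa, FS05, DS}. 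Once finiteness is in hand, assertion (1) follows at no extra cost: having just shown that $h(x,\cdot)$ and $-h(\cdot,y)$ are critical subsolutions, Proposition \ref{lip} endows them with the uniform Lipschitz constant $\kappa_{c_H}$ of \eqref{kappa}, which is independent of the frozen variable, so $h$ is Lipschitz on $M\times M$.

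The main obstacle is the supersolution half of (3). Here the first displayed inequality gives $h(x,y)\le\inf_z\big[h(x,z)+h_s(z,y)\big]$, and the task is the reverse bound for each $s>0$. I would choose $t_n\to+\infty$ realizing $h(x,y)=\lim_n h_{t_n}(x,y)$, take action-minimizing curves $\gamma_n$ for $h_{t_n}(x,y)$ (which exist and are compact by Tonelli/Clarke--Vinter theory, \cite{Cl13}), and set $z_n=\gamma_n(-s)$. By the dynamic programming additivity of minimizers, $h_{t_n}(x,y)=h_{t_n-s}(x,z_n)+h_s(z_n,y)$. Extracting $z_n\to z$ and using continuity of $h_s(\cdot,y)$, the first summand converges to $h(x,y)-h_s(z,y)$. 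The crucial point is then $h(x,z)\le h(x,y)-h_s(z,y)$, which I would obtain by combining the uniform-in-$t$ (for $t$ bounded below) Lipschitz continuity of $z'\mapsto h_t(x,z')$ — a consequence of the a priori compactness of minimizers — so that $\liminf_n h_{t_n-s}(x,z_n)=\liminf_n h_{t_n-s}(x,z)$, with the elementary fact that a $\liminf$ along the subsequence $t_n-s\to+\infty$ dominates the full $\liminf$ defining $h(x,z)$. This yields $h(x,y)\ge h(x,z)+h_s(z,y)\ge\inf_w[h(x,w)+h_s(w,y)]$, hence equality. Thus $h(x,\cdot)$ is a fixed point of the Lax--Oleinik semigroup, which is precisely the characterization of weak KAM (viscosity) solutions, completing (3).
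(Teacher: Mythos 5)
The first thing to say is that the paper itself contains no proof of Proposition \ref{proph}: it is recalled as classical weak KAM theory, with the reader sent to \cite{FS05,Fa12} (see also \cite{DavZav,DS}). So your proposal cannot be compared to ``the paper's proof''; measured against the standard arguments in those references, its architecture is correct and is essentially the classical one. Items (2), (4) and the subsolution half of (3) do follow from Proposition \ref{sub} combined with the two liminf forms of the triangle inequality (note that you tacitly, and correctly, fixed the typo in Proposition \ref{inegtr}, whose right-hand side should read $h_t(x,y)+h_{t'}(y,z)$); the Lipschitz part of (1) follows, once finiteness is known, from Proposition \ref{lip} applied in each variable separately with the constant $\kappa_{c_H}$ of \eqref{kappa}; and your treatment of the supersolution half of (3) --- split minimizers of $h_{t_n}(x,y)$ at time $-s$, pass to a limit point $z$ of $z_n=\gamma_n(-s)$, and read the resulting identity $h(x,y)=h(x,z)+h_s(z,y)$ as item (2) of Theorem \ref{weakKAM} for $u=h(x,\cdot)$ --- is the standard argument. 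Invoking Theorem \ref{weakKAM} here creates no circularity, since that statement concerns only the finite-time functionals $h_t$.

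Two steps, however, are invoked rather than proved, and one of them is defective as phrased. First, finiteness of $h$: you justify it by ``non-emptiness of the Aubry set'', but in this paper $\A$ is \emph{defined} from $h$ (Definition \ref{aubrydef}), so as written the justification is circular unless you independently produce some $x_0$ with $h(x_0,x_0)=0$. The gap is easy to close with the paper's own tools: by the remark following Definition \ref{aubrydef}, if $\A=\varnothing$ then the subsolution of Theorem \ref{strict} is strict on all of $M$, hence by compactness a subsolution at some level $c<c_H$, contradicting the minimality of $c_H$; so there exists $x_0$ with $h(x_0,x_0)=0$, and your own triangle inequalities --- which are valid with values in $(-\infty,+\infty]$, all quantities being bounded below by your lower bound from (2) --- give $h(x,y)\leqslant h_1(x,x_0)+h(x_0,x_0)+h_1(x_0,y)<+\infty$. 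Second, the limit passage in (3) uses the continuity of $h_s(\cdot,y)$ and the equi-Lipschitz continuity of $h_t(x,\cdot)$ for $t$ bounded away from $0$; in the merely continuous setting of this paper these are true but not free (they rest on a priori compactness of minimizers and reparametrization arguments, cf.\ \cite{DavZav,Cl13}), so they have the same status as the facts the paper itself outsources: acceptable as citations, but they should be flagged as such. The equi-Lipschitz claim, at least, can be bypassed: joining $z_n$ to $z$ by a unit-speed geodesic of duration $\delta_n=d(z_n,z)$, whose action is at most $C\delta_n$ with $C=\max\{L(w,v)+c_H,\ \|v\|_w\leqslant 1\}$, yields $h_{t_n-s+\delta_n}(x,z)\leqslant h_{t_n-s}(x,z_n)+C\delta_n$, and the liminf along the times $t_n-s+\delta_n\to+\infty$ dominates the full liminf defining $h(x,z)$.
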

We may now give a proper definition of the projected Aubry set:
\begin{df}\label{aubrydef}\rm
The projected Aubry set is the closed and non empty set
$$\mathcal A = \{x\in M,\ \ h(x,x)=0\}.$$
\end{df}
Let us recall the already introduced Theorem \ref{strict}
\begin{Th*}
There exists a function $v : M\to \R$ that is a subsolution of \eqref{HJ-crit}, that is $C^{\infty}$ and strict on $M\setminus \A$ meaning that
$$\forall x\in M\setminus \A , \quad H(x,D_x v) < c_H.$$
\end{Th*}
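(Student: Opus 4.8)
The plan is to build $v$ in two stages: a \emph{variational} stage producing a merely Lipschitz critical subsolution that is already strict on $M\setminus\A$, and a \emph{regularization} stage upgrading it to a function that is $C^\infty$ and strict on $M\setminus\A$ while remaining a viscosity subsolution of \eqref{HJ-crit} on all of $M$. Everything is driven by the Peierls barrier $h$ and by the defining identity $\A=\{x:h(x,x)=0\}$ (Definition \ref{aubrydef}), so that points outside $\A$ carry a quantitative ``action gap'' to exploit.

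First I would manufacture, for each fixed $x_0\in M\setminus\A$, a critical subsolution that is strict in a neighborhood of $x_0$. Since $x_0\notin\A$ we have $h(x_0,x_0)=:2\sigma>0$, and the triangular inequality for $h$ gives $h(x_0,x)+h(x,x_0)\ge h(x_0,x_0)>0$ for every $x\in M$. Hence the critical solution $u^+=h(x_0,\cdot)$ and the critical subsolution $u^-=-h(\cdot,x_0)$ (Proposition \ref{proph}, items (3)--(4)) satisfy $u^+-u^-\ge 2\sigma>0$ on $M$. The averaged function $w_{x_0}=\tfrac12(u^++u^-)$ is again a critical subsolution, as a convex combination of the equi--Lipschitz family of critical subsolutions (Proposition \ref{viscositygen}(3)). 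The key point is that $w_{x_0}$ cannot be tight at $x_0$: a failure of strictness there would force both $u^+$ and $u^-$ to realize equality in Proposition \ref{sub} along a common curve through $x_0$, and concatenating such curves would produce loops at $x_0$ of arbitrarily large duration with non-positive corrected action, contradicting $h(x_0,x_0)>0$. Thus $w_{x_0}$ has a strict defect at $x_0$, and by continuity this defect persists on a neighborhood $V_{x_0}$.

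To globalize, I would exhaust the open set $M\setminus\A$ by the compact sets $K_n=\{x\in M: d(x,\A)\ge 1/n\}$. Finitely many of the $V_{x_i}$ cover $K_n$, and a finite convex combination $w_n=\sum_i\lambda_i w_{x_i}$ with all $\lambda_i>0$ is, by convexity of $H$, strict on all of $K_n$ with a uniform gap $\delta_n>0$ (at each point at least one term is strict and carries positive weight). Finally I would set $w=\sum_n\mu_n w_n$ with $\mu_n>0$, $\sum_n\mu_n=1$, the $w_n$ normalized at a basepoint so the series converges uniformly; the partial sums are equi--Lipschitz critical subsolutions, so $w$ is one as well by Propositions \ref{stab} and \ref{viscositygen}, and for $x$ interior to some $K_{n_0}$ the same convexity estimate yields $H(x,D_xw)\le c_H-\mu_{n_0}\delta_{n_0}<c_H$ for a.e.\ such $x$. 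By Proposition \ref{aesub} this makes $w$ a critical subsolution that is strict, in the a.e.\ sense with a locally uniform gap, on every compact subset of $M\setminus\A$.

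It remains to regularize $w$ without losing exactness. Here I would mollify with a \emph{location--adapted} scale: choose a smooth $\rho:M\setminus\A\to(0,\infty)$ with $\rho(x)\to0$ as $x\to\A$, and require on each shell $K_n\setminus K_{n-1}$ that $\rho$ be small enough that the $O(\rho)$ overshoot produced by mollifying a subsolution of the convex equation \eqref{HJ-crit} (the mechanism behind Proposition \ref{approx}) stays strictly below the local gap $\delta_n$. Convolving $w$ at scale $\rho(x)$ then yields a function that is $C^\infty$ on $M\setminus\A$, satisfies $H(x,D_xv)<c_H$ pointwise there, and converges uniformly to $w$ as $x\to\A$, so extending by $v=w$ on $\A$ produces a continuous $v$ on $M$; the global subsolution property is recovered by comparing $v$ with $w$ through Proposition \ref{viscositygen}(2). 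I expect the genuine obstacle to be exactly this last calibration—arranging the mollification radius against a strictness gap that is allowed to degenerate toward $\A$, so that smoothness and strictness survive on $M\setminus\A$ while the perturbation of $w$ vanishes at $\A$ and the subsolution inequality remains exact across $\A$. This balancing is the technical heart of the statement and is precisely the content of \cite[Theorem 6.2]{FS05}.
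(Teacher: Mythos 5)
Your outline does reproduce, in broad strokes, the architecture of the Fathi--Siconolfi argument (note that the paper itself never proves this theorem: it imports it wholesale as \cite[Theorem 6.2]{FS05} and uses it as a black box in Proposition \ref{comp-l} and Proposition \ref{inclusion}): a subsolution strict near each point $x_0\notin\A$, a countable convex combination giving a single subsolution strict on all of $M\setminus\A$, then a smoothing whose scale degenerates near $\A$. Your globalization step is fine. But both steps where the real work lies are gapped. The first gap is your claim that $w_{x_0}=\frac12\big(h(x_0,\cdot)-h(\cdot,x_0)\big)$ is strict near $x_0$. Failure of strictness of a Lipschitz subsolution is an essential, almost-everywhere statement on neighborhoods of $x_0$: it yields positive-measure sets of points $z$ arbitrarily close to $x_0$ where $H(z,D_zw_{x_0})>c_H-\delta$, and (by convexity, since both summands are subsolutions) near-tightness of both $H\big(z,D_zh(x_0,\cdot)\big)$ and $H\big(z,-D_zh(\cdot,x_0)\big)$; it does not hand you a curve through $x_0$ along which either function is calibrated, let alone one \emph{common} curve calibrating both, which is what your concatenation-into-loops contradiction requires. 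Worse, since $H$ is only convex (not strictly convex) and only continuous (no Hamiltonian or Lagrangian flow), tightness of the average does not force $D_zh(x_0,\cdot)=-D_zh(\cdot,x_0)$ --- the level set $\{H(z,\cdot)=c_H\}$ may contain flat segments --- so the common curve has no reason to exist. Converting essential near-tightness of the PDE into near-calibrated curves and thence into $h(x_0,x_0)=0$ is precisely the dynamical--PDE equivalence that constitutes the hard part of \cite{FS05}; your sketch assumes it.

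The second gap is that the regularization stage is circular as written: you set up the variable-scale mollification, correctly identify the needed calibration of the scale against the degenerating strictness gap, and then declare this balancing to be ``precisely the content of \cite[Theorem 6.2]{FS05}'' --- that is, you invoke the very statement being proved. In addition, your proposed mechanism for recovering the global subsolution property, Proposition \ref{viscositygen}(2), does not apply: the glued function $v$ is not a pointwise infimum of a family of subsolutions. The correct way to close that particular step is Proposition \ref{aesub}: once $v$ is globally Lipschitz, one has $H(x,D_xv)<c_H$ a.e.\ on the open set $M\setminus\A$ by construction, while at a.e.\ point of $\A$ (a Lebesgue density point of $\A$ at which both $v$ and $w$ are differentiable) the equality $v=w$ on $\A$ forces $D_xv=D_xw$, so $H(x,D_xv)\leqslant c_H$ there as well. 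That repair is available, but it still leaves your argument resting on the unproved strictness lemma and on the uniform smoothing estimates, both of which must come from \cite{FS05}; as a self-contained proof it does not go through.
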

In particular, it is transparent from the previous Theorem and the definition of the critical value $c_H$ that $\A$ should be non--empty.

As the set of critical solutions is invariant by addition of constant, the critical equation cannot verify a comparison principle. However, the Aubry set allows to have a result in this direction:
\begin{Th}\label{compA}
If $u$ and $v$ are respectively a critical sub and supersolution such that $u\leqslant v$ on $\A$ then $u\leqslant v$ on the whole of $M$.

In particular, $\A$ is a uniqueness set for \eqref{HJ-crit} meaning that if two critical solutions coincide on $\A$ then they are equal.
\end{Th}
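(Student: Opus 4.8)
The plan is to reduce the comparison statement on all of $M$ to the hypothesis on $\A$ by exploiting the representation of critical subsolutions through the Peierls barrier $h$, together with the fact that $h(x,\cdot)$ is itself a critical solution (Proposition~\ref{proph}). The guiding idea is that the Aubry set acts as a boundary for the critical equation: information about a subsolution is ``carried'' from an arbitrary point $x\in M$ back to $\A$ along the action functional, so that controlling $u-v$ on $\A$ suffices to control it everywhere.

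First I would fix a point $x\in M$ and show that a critical supersolution $v$ admits a lower bound of the form
$$
v(x) \geqslant \sup_{z\in\A}\big(v(z) - h(z,x)\big),
$$
while a critical subsolution $u$ satisfies the companion upper bound
$$
u(x) \leqslant \inf_{z\in\A}\big(u(z) + h(z,x)\big).
$$
The subsolution estimate is immediate from item~(2) of Proposition~\ref{proph}, applied with the pair $(z,x)$: for every $z$ we have $u(x)-u(z)\leqslant h(z,x)$, so $u(x)\leqslant u(z)+h(z,x)$ and we take the infimum over $z\in\A$. The supersolution estimate is the dual statement and is the point where I expect the main work to lie.

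The hard part will be establishing the lower bound for the supersolution and, more fundamentally, showing that the bound is \emph{sharp on $\A$}, i.e. that for $x\in\A$ the infimum defining the subsolution bound is actually achieved with value $u(x)$ (using $h(x,x)=0$ for $x\in\A$), and likewise that a critical supersolution satisfies $v(x)=\sup_{z\in\A}(v(z)-h(z,x))$-type domination. To handle the supersolution I would argue directly: since $h(z,\cdot)$ is a critical solution by item~(3), and $v$ is a critical supersolution, one compares $v$ with the solution $z\mapsto h(z,x)$ using the local supersolution property at a minimum point of $v(\cdot)-(\text{const}-h(z,\cdot))$, or equivalently one invokes the known fact that any critical supersolution dominates the candidate $z\mapsto v(z)$ transported by the barrier. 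This is essentially the statement that the Peierls barrier realizes the extremal (Lax--Oleinik) solution, and it is here that strict convexity / the limit in Definition~\ref{Peierl} is used.

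Once both bounds are in hand, the conclusion is routine. Given $u\leqslant v$ on $\A$, for any $x\in M$ pick (or take a supremizing/infimizing sequence of) $z\in\A$ and combine:
$$
u(x) \leqslant \inf_{z\in\A}\big(u(z)+h(z,x)\big) \leqslant \inf_{z\in\A}\big(v(z)+h(z,x)\big),
$$
and then match this against the lower bound $v(x)\geqslant \sup_{z\in\A}(v(z)-h(z,x))$ after symmetrizing the roles of the two points, so that the common barrier term cancels and $u(x)\leqslant v(x)$ follows. For the uniqueness consequence, apply the inequality twice with the roles of the two critical solutions exchanged: if they agree on $\A$ then each dominates the other, hence they coincide on $M$. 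The only genuinely delicate ingredient is the transport lemma for supersolutions via $h$; everything else is bookkeeping with Proposition~\ref{proph}.
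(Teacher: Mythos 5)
Your subsolution estimate is fine, but the rest of the argument has a genuine gap --- in fact the key intermediate claim is false as stated. Consider $M=\T=\R/\Z$ with the flat metric and $H(x,p)=p^2$: then $c_H=0$, $L(x,v)=v^2/4$, $h_t(x,y)=d(x,y)^2/(4t)$, hence $h\equiv 0$ and $\A=M$; moreover \emph{every} continuous function is a viscosity supersolution of $(u')^2=0$, since at a local minimum of $v-\varphi$ the inequality $\varphi'(x)^2\geqslant 0$ is automatic. Your proposed bound $v(x)\geqslant \sup_{z\in\A}\big(v(z)-h(z,x)\big)$ would then read $v(x)\geqslant \max_M v$, forcing every supersolution to be constant, which fails for $v(x)=\cos(2\pi x)$ (the theorem itself survives in this example, because the only subsolutions are constants). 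The sign is the heart of the matter: the correct and classical dual estimate is
$$v(x)\ \geqslant\ \min_{z\in\A}\big(v(z)+h(z,x)\big),$$
with a \emph{plus} sign and an infimum, i.e.\ for each $x$ there is $z^*\in\A$ with $v(x)\geqslant v(z^*)+h(z^*,x)$. With that lemma the theorem follows in one line: $u(x)\leqslant u(z^*)+h(z^*,x)\leqslant v(z^*)+h(z^*,x)\leqslant v(x)$. Note also that even if both of your stated bounds were granted, they would not combine: from $v(x)\geqslant v(z)-h(z,x)$ one only gets $v(z)+h(z,x)\leqslant v(x)+2h(z,x)$, so the barrier terms add rather than cancel, and $h(z,x)$ has no favorable sign.

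The second problem is that your sketch for proving the supersolution estimate is circular: ``comparing $v$ with the solution $h(z,\cdot)$ at a minimum point'' is itself a global comparison between a supersolution and a solution of the critical equation, which --- as the paper stresses immediately before the theorem --- the critical equation does not satisfy (solutions are invariant under addition of constants); a single viscosity test point gives no global inequality. The genuinely nontrivial content of Theorem \ref{compA} is exactly the displayed lemma, proved classically either by Lax--Oleinik/calibration arguments producing curves from $x$ that accumulate on $\A$, or by the alternative route of perturbing $u$ by the strict critical subsolution of Theorem \ref{strict} (so that $u_\eta=(1-\eta)u+\eta w$ is strict off $\A$) and running a local comparison, in the style of the paper's proof of Theorem \ref{compG}, at a maximum point of $u_\eta-v$, which strictness forces into $\A$. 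For what it is worth, the paper offers no proof of this statement at all --- it is recalled as classical from \cite{FS05,Fa12} --- so the entire burden of a self-contained proposal lies on that lemma, which here is both misstated and unproved.
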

We finish this paragraph on further characterizations of critical solutions (see \cite{FS05,DavZav}):
\begin{Th}\label{weakKAM}
The following are equivalent:
\begin{enumerate}
\item The function $u: M\to \R$ is a critical solution.
\item The function $u$ is a critical subsolution and 
$$\forall x\in M,\forall t>0, \exists y\in M,\quad u(x) = u(y) +h_t(y,x).$$
\item\label{calibr} The function $u$ is a critical subsolution and for all $x\in M$, there exists a Lipschitz curve $\gamma_x : (-\infty , 0]\to M$ such that $\gamma(0)=x$ and
$$\forall t>0, \quad u(x) = u\big(\gamma_x(-t)\big) + h_t\big(\gamma_x(-t),x\big).$$
\end{enumerate}
 
\end{Th}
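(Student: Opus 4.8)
The plan is to treat this as the classical identification of critical (weak KAM) solutions with fixed points of the Lax--Oleinik semigroup, so I first reformulate condition (2). For a critical subsolution $u$, Proposition \ref{sub} gives $u(x)-u(y)\leqslant h_t(y,x)$ for all $y$ and all $t>0$, i.e. $u(x)\leqslant\inf_{y\in M}\{u(y)+h_t(y,x)\}$; since $M$ is compact and $y\mapsto u(y)+h_t(y,x)$ is continuous, the infimum is attained. Thus condition (2) is exactly the statement that this infimum equals $u(x)$ for every $x$ and $t$, i.e. that $u$ is calibrated. The implication $(3)\Rightarrow(2)$ is then immediate by taking $y=\gamma_x(-t)$, so the real content is $(1)\Leftrightarrow(2)$ together with $(2)\Rightarrow(3)$.

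For $(2)\Rightarrow(1)$ I would argue by contradiction using the Fenchel inequality, and this step I can make essentially self-contained. Assume $u$ is a calibrated subsolution but fails to be a supersolution at some $x_0$: there is a $C^1$ function $\varphi$ with $u\geqslant\varphi$ near $x_0$, $u(x_0)=\varphi(x_0)$, and $H(x_0,D_{x_0}\varphi)<c_H$, hence $H(x,D_x\varphi)\leqslant c_H-\delta$ on a ball $B(x_0,\rho)$ for some $\delta>0$. Let $\gamma$ be a Lipschitz minimizer realizing the calibration at $x_0$; since such minimizers are equi--Lipschitz (see the a priori bound discussed below) and $\gamma(0)=x_0$, the curve $\gamma|_{[-t,0]}$ stays in $B(x_0,\rho)$ for $t$ small, and it is itself minimizing for $h_t(\gamma(-t),x_0)$. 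Using $\varphi\leqslant u$ and Proposition \ref{fenchel},
$$u(x_0)-u\big(\gamma(-t)\big)\leqslant \varphi(x_0)-\varphi\big(\gamma(-t)\big)=\int_{-t}^0 D_{\gamma(s)}\varphi\big(\dot\gamma(s)\big)\,ds\leqslant \int_{-t}^0\Big[L\big(\gamma(s),\dot\gamma(s)\big)+c_H-\delta\Big]\,ds=h_t\big(\gamma(-t),x_0\big)-\delta t.$$
As calibration gives $u(x_0)-u(\gamma(-t))=h_t(\gamma(-t),x_0)$, this forces $0\leqslant-\delta t$, a contradiction for $t>0$; hence $u$ is a supersolution and therefore a critical solution.

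The converse $(1)\Rightarrow(2)$ needs only the reverse inequality $u(x)\geqslant\inf_y\{u(y)+h_t(y,x)\}$, and this is where the genuine difficulty lies: it is the dynamic programming principle, asserting that a viscosity supersolution of \eqref{HJ-crit} dominates the value function of the finite--horizon variational problem. In the present merely continuous, not strictly convex setting there is no Hamiltonian flow, so I would obtain it from the variational--viscosity correspondence established in \cite{DavZav,FS05}, namely that $w(t,x)=\inf_y\{u(y)+h_t(y,x)\}$ is a viscosity solution of the evolution equation $\partial_t w+H(x,D_xw)=c_H$ with $w(0,\cdot)=u$, so that a stationary supersolution lies above it. Combined with the subsolution inequality this yields $w(t,\cdot)=u$, i.e. condition (2). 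I expect this dynamic programming step in the non--smooth regime to be the main obstacle.

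Finally, for $(2)\Rightarrow(3)$ I would patch the calibrating points into a curve by a compactness argument. For each integer $n$, condition (2) at time $n$ gives a minimizer $\gamma_n\colon[-n,0]\to M$ with $\gamma_n(0)=x$ calibrating $u$ at the endpoint; the triangular inequality (Proposition \ref{inegtr}), the additivity of the action along $\gamma_n$, and Proposition \ref{sub} force all intermediate inequalities to be equalities, so $u(x)=u(\gamma_n(-s))+h_s(\gamma_n(-s),x)$ for every $0<s\leqslant n$. The curves $\gamma_n$ are equi--Lipschitz by the standard a priori estimate for critical minimizers on a compact manifold (superlinearity of $L$ bounds the energy of curves calibrated by the fixed strict subsolution of Theorem \ref{strict}), so by Arzel\`a--Ascoli and a diagonal extraction they converge uniformly on compact subsets to a Lipschitz curve $\gamma_x\colon(-\infty,0]\to M$. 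Passing to the limit in the calibration identity at each fixed $s$, using continuity of $u$ and of $h_s$, gives $u(x)=u(\gamma_x(-s))+h_s(\gamma_x(-s),x)$ for all $s>0$, which is exactly (3). The uniform Lipschitz bound underpinning this extraction is the secondary technical point of the proof.
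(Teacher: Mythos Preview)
The paper does not give its own proof of this theorem: it is stated as a known characterization and referenced to \cite{FS05,DavZav}. So there is no in--paper argument to compare against; the relevant comparison is whether your sketch matches the strategy of those references, which it does.

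Your outline is essentially correct and standard. A couple of small comments. In $(2)\Rightarrow(1)$, you invoke ``a Lipschitz minimizer realizing the calibration at $x_0$'' and cite the a~priori bound ``discussed below''; to make this airtight you should note that for a fixed small $t>0$, condition~(2) yields $y$ and a minimizer $\gamma\colon[-t,0]\to M$ of $h_t(y,x_0)$, and that the equality $u(x_0)-u(y)=h_t(y,x_0)$ together with Proposition~\ref{sub} forces the intermediate calibration $u(\gamma(s_2))-u(\gamma(s_1))=\int_{s_1}^{s_2}[L(\gamma,\dot\gamma)+c_H]\,d\sigma$. It is \emph{this} identity (with the Lipschitz constant $\kappa_{c_H}$ of $u$, not the strict subsolution of Theorem~\ref{strict}) that feeds the superlinearity estimate after the theorem and gives a Lipschitz bound on $\gamma$ independent of $t$; then $\gamma([-t,0])\subset B(x_0,\rho)$ for $t$ small and your Fenchel computation goes through. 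In $(2)\Rightarrow(3)$ the same remark applies: the equi--Lipschitz constant for the $\gamma_n$ comes from calibration by $u$, not from Theorem~\ref{strict}.

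You are right that the substantive step is $(1)\Rightarrow(2)$, i.e.\ that a critical viscosity solution is a fixed point of the Lax--Oleinik semigroup. In the merely continuous, convex (not strictly convex) setting this is precisely the content of the references the paper cites, and your description of it via the evolutionary comparison $w(t,x)=\inf_y\{u(y)+h_t(y,x)\}$ is the correct route; there is nothing to add beyond invoking \cite{FS05,DavZav} as the paper does.
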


Let us comment on this last point and make precise a Lipschitz constant for $\gamma_x$. Using the triangular inequality \ref{inegtr} and the definition of $h_t$ one infers that 
for $s<t\leqslant 0$,
\begin{equation}\label{calib}
u\big(\gamma_x(t)\big) - u\big(\gamma_x(s)\big) = \int_s^t\Big[L\big(\gamma_x (\sigma),\dot\gamma_x(\sigma)\big) +c_H\Big]d\sigma.
\end{equation}
Recall that $u$ is Lipschitz with constant $\kappa_{c_H}$ given by \eqref{kappa}. Moreover, by superlinearity of $L$, the constant
$$A(\kappa_{c_H}) = \max\left\{ (\kappa_{c_H}+1)\|v\|_x - L(x,v)-c_H , \ \ (x,v)\in TM  \right\}  $$
is such that 
 $$\forall (x,v)\in TM,\quad L(x,v) +c_H\geqslant (\kappa_{c_H}+1)\|v\|_x -A(\kappa_{c_H}).$$
 One then computes using \eqref{calib} that
 \begin{multline*}
 \kappa_{c_H}d\big(\gamma_x(t),\gamma_x(s)\big) 
  \geqslant \int_s^t\left[ (\kappa_{c_H}+1)\|\dot\gamma_x(\sigma)\|_{\gamma_x(\sigma)} -A(\kappa_{c_H})  \right]d \sigma. \\
  \geqslant (\kappa_{c_H}+1)d\big(\gamma_x(t),\gamma_x(s)\big) - (t-s)A(\kappa_{c_H})  .
 \end{multline*}
 Hence we conclude that $d\big(\gamma_x(t),\gamma_x(s)\big) \leqslant (t-s)A(\kappa_{c_H})$ and that $\gamma_x$ is indeed Lipschitz with constant $A(\kappa_{c_H})$.
 
 \subsection{Mather measures}
 We gather here informations already present in \cite{DFIZ} and references therein.
 
 Throughout the rest of this work, we will be dealing with Borel Probability measures on $TM$. We will denote this set by $\PP(TM)$ and. 
 
 We say that a sequence of Borel Probability measures on $TM$, $(\mu_n)_{n\in \mathbb N}$ weakly converges to $\mu\in \PP(TM)$ if for all continuous function $f : TM\to \R$ with compact support,
 $$\int_{TM} f(x,v)d \mu_n(x,v) \to \int_{TM} f(x,v)d \mu(x,v).$$
 If $\mu \in \PP(TM)$, we will denote $\pi^*\mu$ its push forward on $M$ defined by 
 $\pi^*\mu (B) = \mu\big(\pi^{-1}(B)\big)$ for any Borel set $B\subset M$. Recall that $\pi : TM\to M$ is the canonical projection. Note that the following formula hold for all continuous function $f : M\to \R$:
 $$\int_M f(x) d \pi^*\mu(x) = \int_{TM} f\circ \pi(x,v) d \mu(x,v).$$
 
 We then introduce closed measures:
 \begin{df}\label{closed}\rm
 We say that $\mu\in \PP(TM)$ is closed if
 \begin{enumerate}
 \item it has finite first moment: $\int_{TM} \|v\|_x d\mu(x,v)<+\infty$,
 \item for all function $f\in C^1(M,\R)$, $\int_{TM}D_xf (v) d\mu(x,v)=0$.
 \end{enumerate}
 We will denote by $\PP_0(TM)$ the set of closed measures on $TM$.
 \end{df}

Let us  recall the link between closed measures and the critical value $c_H$ (\cite[Theorem A.7.]{DFIZ}). Recall that in this section, $H$ and $L$ are convex and superlinear in the fibers:
\begin{Th}\label{minimizing}
The following holds:
$$\min_{\mu\in \PP_0(TM)} \int_{TM}L(x,v)d\mu(x,v) = -c_H.$$
\end{Th}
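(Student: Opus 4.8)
The plan is to prove the identity $\min_{\mu\in\PP_0(TM)}\int_{TM}L\,d\mu=-c_H$ by establishing the two inequalities separately and then verifying that the infimum is attained. I would begin by recalling that by definition $c_H$ is the smallest constant for which \eqref{HJ-crit} admits a subsolution, and that critical subsolutions are exactly the Lipschitz functions $w$ satisfying $w(y)-w(x)\le h_t(x,y)$ for all $t>0$ (Proposition \ref{sub}), equivalently $H(x,D_xw)\le c_H$ almost everywhere (Proposition \ref{aesub}). The bridge between measures and subsolutions is the Fenchel inequality from Proposition \ref{fenchel}: for any $(x,v)$ and any $p\in T^*_xM$ one has $L(x,v)\ge p(v)-H(x,p)$.

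For the inequality $\int L\,d\mu\ge -c_H$ for every closed $\mu$, I would fix a critical subsolution $w$ and approximate it, via Proposition \ref{approx}, by a smooth function $w_\eps$ with $H(x,D_xw_\eps)\le c_H+\eps$. Applying Fenchel pointwise with $p=D_xw_\eps$ gives $L(x,v)\ge D_xw_\eps(v)-H(x,D_xw_\eps)\ge D_xw_\eps(v)-c_H-\eps$. Integrating against $\mu$ and using the defining closedness property $\int_{TM}D_xw_\eps(v)\,d\mu=0$ (here the finite first moment of $\mu$ guarantees integrability), we obtain $\int_{TM}L\,d\mu\ge -c_H-\eps$, and letting $\eps\to0$ yields the lower bound. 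This half is essentially a duality computation and should be straightforward.

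For the reverse inequality, and simultaneously for attainment, I would exhibit a closed measure $\mu_0$ with $\int L\,d\mu_0\le -c_H$. The natural candidate comes from a calibrated curve: by Theorem \ref{weakKAM}\eqref{calibr}, for a fixed critical solution $u$ and a point $x$ there is a curve $\gamma_x:(-\infty,0]\to M$, Lipschitz with the explicit constant $A(\kappa_{c_H})$ computed above, satisfying the calibration identity \eqref{calib}, so that $\int_s^t[L(\gamma_x,\dot\gamma_x)+c_H]\,d\sigma=u(\gamma_x(t))-u(\gamma_x(s))$ stays bounded as $s\to-\infty$. I would form the occupation measures $\mu_T=\frac1T\int_{-T}^0\delta_{(\gamma_x(\sigma),\dot\gamma_x(\sigma))}\,d\sigma$ on $TM$; the uniform Lipschitz bound on $\gamma_x$ keeps these supported in a fixed compact set, so by Prokhorov a subsequence converges weakly to some $\mu_0\in\PP(TM)$. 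Closedness of $\mu_0$ follows because for $f\in C^1(M)$ the integral $\int D_xf(v)\,d\mu_T$ equals $\frac1T[f(\gamma_x(0))-f(\gamma_x(-T))]\to0$, and the first moment is controlled by the Lipschitz constant. Finally, dividing the calibration identity by $T$ gives $\int(L+c_H)\,d\mu_T=\frac1T[u(\gamma_x(0))-u(\gamma_x(-T))]\to0$, hence $\int L\,d\mu_0=-c_H$ by weak convergence (after mild care in passing continuous integrands to the limit on the fixed compact support).

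The main obstacle I anticipate is the measure-theoretic limiting argument: passing $\int L\,d\mu_T\to\int L\,d\mu_0$ requires that $L$, though only continuous and superlinear, be integrated against measures with uniformly bounded support, which the Lipschitz constant $A(\kappa_{c_H})$ supplies; one must also ensure the closedness condition survives the limit, which again uses the uniform first-moment bound. All the pieces are available from the earlier results, so the difficulty is organizational rather than deep, provided one is careful to confine everything to the compact set where $\gamma_x$ and its velocity live.
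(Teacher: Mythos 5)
Your proof is correct. One structural remark first: the paper does not actually prove Theorem \ref{minimizing} at all --- it imports it wholesale, citing Theorem A.7 of \cite{DFIZ} --- so your argument is not competing with an internal proof but supplying one, and it is the standard one. Your lower bound (Fenchel inequality with $p=D_xw_\eps$ for a smoothed critical subsolution $w_\eps$, plus closedness to kill the term $\int D_xw_\eps(v)\,d\mu$, plus $\eps\to 0$) is exactly the mechanism the paper itself deploys in the proof of Proposition \ref{inclusion}, and your attainment argument (occupation measures $\mu_T$ along a calibrated curve from Theorem \ref{weakKAM}\eqref{calibr}, uniformly compactly supported thanks to the Lipschitz bound $A(\kappa_{c_H})$, closed in the limit by the fundamental theorem of calculus applied to $f\circ\gamma_x$, and minimizing because the calibration identity \eqref{calib} makes $\int(L+c_H)\,d\mu_T=\frac1T\bigl[u(x)-u(\gamma_x(-T))\bigr]\to 0$) is the undiscounted analogue of the constructions in Propositions \ref{excursion} and \ref{mesure-lim}. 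All the delicate points are handled: the finite first moment justifies integrability in the duality half; the common compact support justifies testing weak convergence against the non-compactly-supported functions $D_xf(v)$ and $L$ via a cutoff; and the limit measure inherits support in the same compact set, so closedness and the value $-c_H$ both pass to the limit. The only caveat worth recording is that your attainment step presupposes the existence of a critical \emph{solution} (not merely a subsolution) so that Theorem \ref{weakKAM}\eqref{calibr} applies; the paper asserts this existence as background (``$c_H$ is the only real number such that the critical equation admits viscosity solutions''), so within the paper's framework this is legitimate, but in a fully self-contained treatment it is an ingredient one would have to prove separately.
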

Measures realizing the minimum in the previous expression are called minimizing measures, or Mather measures. We will denote by $\PP_{\mathrm{min}}$ the set of Mather measures. Closely linked is the Mather set:
\begin{df}\label{mather}\rm
The Mather set is defined as
$$\widetilde \MM = \bigcup_{\mu \in \PP_{\mathrm{min}}}\mathrm{supp}(\mu)$$
where supp$(\mu)$ denotes the support of $\mu$.

The projected Mather set is $\MM = \pi\big(\widetilde \MM\big)$.
\end{df}
It can be proved that the Mather set is compact and it is not empty as we will see in the last section.

We end by a classical result of weak KAM and Aubry--Mather theories. We provide a proof in this continuous setting for sake of completeness:
\begin{prop}\label{inclusion}
The following inclusion holds: $\MM\subset \A$.
\end{prop}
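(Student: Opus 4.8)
The plan is to combine the variational characterisation of $c_H$ through closed measures (Theorem \ref{minimizing}) with the Fenchel inequality (Proposition \ref{fenchel}) and the strict subsolution of Theorem \ref{strict}. Since it suffices to treat a single Mather measure and then take the union of supports, I would fix $\mu\in\PP_{\mathrm{min}}$; by definition $\mu$ is closed (Definition \ref{closed}) and realises the minimum in Theorem \ref{minimizing}, so $\int_{TM}(L+c_H)\,d\mu=0$. Let $w:M\to\R$ denote the critical subsolution furnished by Theorem \ref{strict}, which I will employ as a $C^1$ test function; its regularity at points of $\A$ is the delicate point, discussed at the end.

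The first step is purely pointwise. For every $(x,v)\in TM$ the Fenchel inequality gives $L(x,v)+H(x,D_xw)\geqslant D_xw(v)$, and since $w$ is a critical subsolution, $H(x,D_xw)\leqslant c_H$. Combining the two yields
$$0\leqslant c_H-H(x,D_xw)\leqslant L(x,v)+c_H-D_xw(v)=:\Phi(x,v),$$
so that $\Phi$ is a nonnegative continuous function on $TM$. The second step is to integrate $\Phi$ against $\mu$. Closedness of $\mu$ (applied to the $C^1$ function $w$) gives $\int_{TM}D_xw(v)\,d\mu=0$, and the Mather property gives $\int_{TM}(L+c_H)\,d\mu=0$; hence $\int_{TM}\Phi\,d\mu=0$. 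As $\Phi\geqslant0$, this forces $\Phi=0$ $\mu$-almost everywhere, and the chain of inequalities above then forces $H(x,D_xw)=c_H$ for $\mu$-almost every $(x,v)$.

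The conclusion follows from strictness. Since $H(x,D_xw)=c_H$ holds $\pi^*\mu$-almost everywhere while $w$ is strict off $\A$, i.e. $H(x,D_xw)<c_H$ on the open set $M\setminus\A$, we deduce $\pi^*\mu(M\setminus\A)=0$. Openness of $M\setminus\A$ then gives $\mathrm{supp}(\pi^*\mu)\subset\A$, and as $\pi\big(\mathrm{supp}(\mu)\big)\subset\mathrm{supp}(\pi^*\mu)$ (any neighbourhood of $\pi(y)$ pulls back to a neighbourhood of $y$ of positive $\mu$-mass), we obtain $\pi\big(\mathrm{supp}(\mu)\big)\subset\A$. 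Taking the union over $\mu\in\PP_{\mathrm{min}}$ yields $\MM\subset\A$.

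The step I expect to be the main obstacle is justifying $\int_{TM}D_xw(v)\,d\mu=0$, since Definition \ref{closed} applies to $C^1$ test functions whereas Theorem \ref{strict} only asserts that $w$ is continuous on $M$ and $C^\infty$ on $M\setminus\A$. I would resolve this by using that the Fathi--Siconolfi construction underlying Theorem \ref{strict} actually produces a globally $C^1$ critical subsolution that is strict on $M\setminus\A$, which legitimises the closedness identity. Should one wish to argue only with the stated regularity, it is enough to rule out $\pi^*\mu(\overline U)>0$ for an arbitrary compact $\overline U\subset M\setminus\A$; on a neighbourhood of such $\overline U$ the function $w$ is smooth, so multiplying by a smooth cutoff supported in $M\setminus\A$ yields an admissible $C^1$ test function, at the cost of carrying the extra product-rule term through the computation.
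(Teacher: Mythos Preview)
Your argument is essentially the paper's: combine the Fenchel inequality, the closedness identity, the minimizing property $\int(L+c_H)\,d\mu=0$, and the strictness of the Fathi--Siconolfi subsolution to force $\mathrm{supp}(\pi^*\mu)\subset\A$. You have also correctly identified the only delicate point, namely that Theorem~\ref{strict} as stated gives $w$ merely continuous on $M$ and smooth on $M\setminus\A$, so Definition~\ref{closed} does not apply to $w$ directly.

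The paper resolves this differently from either of your suggestions. Rather than invoking extra regularity from \cite{FS05} or using a cutoff, it applies the smoothing result Proposition~\ref{approx}: one first quantifies the strictness by a continuous $\beta\geqslant0$ with $\beta^{-1}(\{0\})=\A$ and $H(x,D_xw)<c_H-\beta(x)$ on $M\setminus\A$, then for small $\varepsilon'>0$ obtains a globally $C^1$ function $w_{\varepsilon'}$ satisfying $H(x,D_xw_{\varepsilon'})<c_H-\beta(x)+\varepsilon'$ on all of $M$. Now the closedness identity is legitimate for $w_{\varepsilon'}$, and integrating Fenchel against $\mu$ (argued by contradiction from a point $x_0\in\pi(\mathrm{supp}\,\mu)\setminus\A$ and a neighbourhood $O\ni x_0$ where $\beta\equiv\varepsilon>0$) yields $0\leqslant -\varepsilon\,\mu(\pi^{-1}(O))+\varepsilon'<0$. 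This stays entirely within the paper's toolkit. Your cutoff alternative, by contrast, introduces the product-rule term $w(x)D_x\chi(v)$ whose $\mu$-integral you have no a~priori control over, so that route would need more work to close; the approximation via Proposition~\ref{approx} is the cleaner fix here.
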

\begin{proof}
We argue by contradiction and assume that there exists $\mu\in \PP_{\mathrm{min}}$ and $(x_0,v_0)\in \mathrm{supp}(\mu)$ such that $x_0\notin \A$. Let $O$ and $O'$ be open sets of $M$ such that $\overline O \cap \overline{O'}=\varnothing$, $x_0\in O$ and $\A\subset O'$. Let $w : M\to\R$ be the function given by Theorem \ref{strict} and let $\varepsilon>0$ such that 
$H\big(x,D_x w)<c_H-\varepsilon$ for $x\in O$.  Let $\beta : M\to [0,+\infty)$ be a continuous function such that $\beta^{-1}(\{0\})= \A$, $O\subset \beta^{-1}(\{\varepsilon\})$ and 
$$\forall x\in M\setminus \A, \quad H(x,D_x w)<c_H-\beta(x).$$
 By assumption, $\mu\big(\pi^{-1}(O)\big)>0$ so let $\varepsilon'>0$ be such that $2\varepsilon' < \varepsilon \mu\big(\pi^{-1}(O)\big)$. 
 Finally, we apply Proposition \ref{approx} to the function $w$ and Hamiltonian $G(x,p)= H(x,p)-\beta(x)-c_H$ to obtain a $C^1$ function $w_{\varepsilon'}$ such that 
 $$\forall x\in M, \quad H(x,D_x w_{\varepsilon'})<c_H-\beta(x)+\varepsilon'.$$
 
 We then use the Fenchel inequality (Proposition \ref{fenchel}) and the fact that $\mu$ is closed and minimizing to infer
 \begin{align*}
 0=\int_{TM} D_x w_{\varepsilon'} (v) d \mu(x,v) & \leqslant \int_{TM}\left[ H(x,D_x  w_{\varepsilon'})+L(x,v)\right]d \mu(x,v) \\
 &\leqslant  \int_{TM} (c_H-\beta(x)+\varepsilon')d\mu(x,v) -c_H \\
 &\leqslant \int_{\pi^{-1}(O)} -\beta(x) d\mu(x,v) + \int_{TM}(c_H+\varepsilon') d\mu(x,v) - c_H \\
 &= -\varepsilon \mu\big(\pi^{-1}(O)\big) +\varepsilon' <0.
 \end{align*}
This is absurd and proves the proposition.
\end{proof}

\section{The degenerate discounted equation}\label{sec:deg}

We now turn to \eqref{HJ-l} and study its properties.

\subsection{Generalities on the degenerate discounted equation}
 In this section, unless stated otherwise, we fix a $\lambda>0$ and start with a comparison principle that follows from the previous results:

\begin{prop}\label{comp-l}
Equation \eqref{HJ-l} enjoys the two following properties:
\begin{enumerate}
\item \label{comp-1} If $u : M\to \R$ and $v: M\to \R$ are respectively a subsolution and a supersolution of \eqref{HJ-l} then $u\leqslant v$.
\item \label{existence} There exists a unique solution $u_\lambda$ to \eqref{HJ-l}.

\end{enumerate}
\end{prop}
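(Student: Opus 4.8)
The plan is to recognize \eqref{HJ-l} as a particular instance of the general equation \eqref{HJgen}, with
\[
G(x,p,r) = \lambda\alpha(x)\,r + H(x,p) - c_H ,
\]
and to check that this $G$ satisfies all the structural hypotheses of Section \ref{generality}: it is continuous; convex in $(p,r)$, being the sum of $H(x,\cdot)$, convex by (H1), and the affine term $\lambda\alpha(x)r$; coercive in $p$, since the discount term does not involve $p$ and (H2) is inherited; and, crucially, \emph{non-decreasing} in $r$ precisely because $\alpha\geqslant 0$ by ($\alpha$1) and $\lambda>0$. Once this is done, assertion \eqref{comp-1} will follow from Theorem \ref{compG}, uniqueness in \eqref{existence} will be immediate, and existence will be produced by Perron's method.

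The crux, and the only place where ($\alpha$2) is genuinely used, is the construction of the strict $C^1$ subsolution of \eqref{HJ-l} that Theorem \ref{compG} requires. I would take the function $v$ of Theorem \ref{strict} and set $\varphi = v - C$ with $C > \|v\|_\infty$, so that $\varphi < 0$ everywhere. Writing $\delta(x) = c_H - H(x,D_x v)\geqslant 0$, one computes
\[
G\big(x,D_x\varphi,\varphi(x)\big) = \lambda\alpha(x)\big(v(x)-C\big) - \delta(x).
\]
Both terms on the right are $\leqslant 0$ since $\alpha\geqslant 0$ and $v(x)-C<0$, so $G\leqslant 0$ pointwise; moreover $G=0$ would force simultaneously $\alpha(x)=0$ and $\delta(x)=0$. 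But $\delta(x)=0$ forces $x\in\A$ by the strictness in Theorem \ref{strict}, while ($\alpha$2) gives $\alpha(x)>0$ on $\A$, a contradiction. Hence $G(x,D_x\varphi,\varphi(x))<0$ at every point, and by continuity of $G$ and compactness of $M$ this strict inequality is uniform. This is where the degeneracy of the discount is compensated by the nonvanishing of $\alpha$ on the Aubry set, and I expect it to be the main obstacle; everything else is then essentially formal. With $\varphi$ in hand, Theorem \ref{compG} yields \eqref{comp-1} verbatim, and uniqueness in \eqref{existence} follows because two solutions are each simultaneously sub- and supersolutions, so comparison gives inequalities both ways.

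For existence I would run Perron's method, for which I need an upper barrier, i.e.\ a supersolution. Starting from a critical (weak KAM) solution $u_c$ of \eqref{HJ-crit}, which exists, e.g.\ $u_c = h(x_0,\cdot)$ by Proposition \ref{proph}, I set $\bar u = u_c + C'$ with $C'$ large enough that $\bar u\geqslant 0$. At any minimum point of $\bar u - \phi$ the supersolution inequality for $u_c$ gives $H(x,D_x\phi)\geqslant c_H$, and adding the nonnegative term $\lambda\alpha(x)\bar u(x)$ preserves $\geqslant c_H$, so $\bar u$ is a supersolution of \eqref{HJ-l}. By the comparison principle every subsolution lies between $\varphi$ and $\bar u$, so the subsolutions form a uniformly bounded, hence (by Proposition \ref{lip}) equi-Lipschitz, family. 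Therefore
\[
u_\lambda = \sup\{\, w : w \text{ is a subsolution of } \eqref{HJ-l} \,\}
\]
is Lipschitz, is a subsolution by Proposition \ref{viscositygen}(1), and the standard Perron bump argument shows that it is also a supersolution. This $u_\lambda$ is the desired unique solution.
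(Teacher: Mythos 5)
Your overall strategy --- recasting \eqref{HJ-l} as \eqref{HJgen} with $G(x,p,r)=\lambda\alpha(x)r+H(x,p)-c_H$, feeding a strict subsolution into Theorem \ref{compG}, then running Perron's method --- is exactly the paper's. But the step you yourself identify as the crux contains a genuine gap. Theorem \ref{strict} does \emph{not} provide a function that is $C^1$ on all of $M$: it provides a \emph{continuous} critical subsolution $v$ that is $C^\infty$ and strict only on $M\setminus\A$; on the Aubry set itself $v$ is merely a Lipschitz viscosity subsolution, and $D_xv$ need not exist there. Consequently your pointwise identity
$$G\big(x,D_x\varphi,\varphi(x)\big)=\lambda\alpha(x)\big(v(x)-C\big)-\delta(x)$$
is meaningless precisely at the points of $\A$ --- the very points where you need the discount term to provide strictness; the function $\delta(x)=c_H-H(x,D_xv)$ is not defined (let alone continuous) there, so the ``continuity of $G$ plus compactness'' argument for a uniform negative margin collapses; and Theorem \ref{compG}, whose hypothesis explicitly demands $\varphi\in C^1(M)$, cannot be applied ``verbatim'' to your $\varphi$.

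This is exactly the difficulty the paper's proof is built to circumvent, and the repair is not cosmetic: one must smooth the subsolution while keeping strictness, and near $\A$ the strictness comes \emph{only} from the discount term, so the admissible smoothing error must be calibrated against it. Concretely, the paper normalizes $\tilde w=w-\|w\|_\infty-2\leqslant-2$, sets $\delta=\tfrac12\min_\A\alpha>0$, chooses $\eta>0$ with $H(x,D_x\tilde w)<c_H-\eta$ on the compact set $\alpha^{-1}([0,\delta])$ (which is disjoint from $\A$, so $\tilde w$ is smooth and strict there), encodes strictness in a continuous $\beta$ vanishing exactly on $\A$, and applies Proposition \ref{approx} with error $\varepsilon<\min(1,\lambda\delta,\eta)$: on $\alpha^{-1}([0,\delta])$ the margin $\eta$ absorbs $\varepsilon$, while on $\alpha^{-1}((\delta,+\infty))$ the Hamiltonian inequality may be ruined near $\A$ but the term $\lambda\alpha(x)w_\varepsilon(x)\leqslant-\lambda\delta$ (using $w_\varepsilon\leqslant-1$) absorbs both $\varepsilon$ and the loss. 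Only then does one have a smooth strict subsolution to which Theorem \ref{compG} applies. (Your argument could alternatively be salvaged by proving an almost-everywhere version of Theorem \ref{compG} for Lipschitz $\varphi$, via Proposition \ref{aesub}, but the paper states no such result, and your proposal does not supply it.) A secondary, fixable slip in the existence part: arbitrary subsolutions of \eqref{HJ-l} are bounded above by $\bar u$ but not below, hence not equi-Lipschitz as a family; as in the paper, Perron's supremum should be taken over subsolutions pinched between a fixed subsolution $\underline u$ and $\bar u$, which changes nothing since replacing $u$ by $\max(u,\underline u)$ keeps the subsolution property by Proposition \ref{viscositygen}.
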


\begin{proof}
\begin{enumerate} \item
Let $w : M\to \R$ be the function given by Theorem \ref{strict}. The function $\tilde w = w-\|w\|_\infty-2\leqslant -2$ is then negative and still  is a subsolution of \eqref{HJ-crit}, that is $C^{\infty}$ and strict on $M\setminus \A$ meaning that
$$\forall x\in M\setminus \A , \quad H(x,D_x \tilde w) < c_H.$$
 Let $\delta = \frac12 \min\{\alpha(x) , \ \ x\in \A\}$. Then $\delta>0$ and $\A \subset \alpha^{-1}\big( (\delta,+\infty)\big)$.
 Let $\eta>0$ such that
  $$\forall x\in \alpha^{-1}[0,\delta] , \quad H(x,D_x \tilde w) < c_H-\eta.$$
  Finally, let $\beta : M\to [0,+\infty)$ be a continuous function such that $\beta^{-1}\{0\} = \A$, $\beta = \eta$ on  $\alpha^{-1}([0,\delta])$ and 
  $$\forall x\in M\setminus \A , \quad H(x,D_x \tilde w) < c_H-\beta(x).$$
  
  If $0<\varepsilon<\min(1,\lambda\delta, \eta)$, by Proposition \ref{approx}, there exists a smooth $w_\varepsilon : M\to \R$ such that $\|\tilde w-w_\varepsilon\|_\infty<\varepsilon$ and 
   $$\forall x\in M , \quad H(x,D_x \tilde w_\varepsilon) \leqslant c_H-\beta(x)+\varepsilon.$$
   It follows that if $x\in \alpha^{-1}([0,\delta])$ then 
   $$\lambda \alpha(x) w_\varepsilon (x) + H(x,D_x \tilde w_\varepsilon)\leqslant  H(x,D_x \tilde w_\varepsilon)\leqslant c_H-\beta(x)+\varepsilon = c_H-\eta+\varepsilon <c_H.$$
   
   On the other hand, if $x\in \alpha^{-1}\big( (\delta , +\infty)\big) $ then 
   $$\lambda \alpha(x) w_\varepsilon (x) + H(x,D_x \tilde w_\varepsilon)\leqslant  -\lambda \delta + H(x,D_x \tilde w_\varepsilon)\leqslant -\lambda\delta +c_H + \varepsilon <c_H.$$
   
   It follows that $w_\varepsilon$ is a smooth strict subsolution to \eqref{HJ-l} and we may apply directly Theorem \ref{compG} that yields the result.
   
   \item This is now a standard result in viscosity solutions theory (see \cite[Théorème 2.12, Théorème 2.14]{barles}). Uniqueness is a direct consequence of the first part. As for existence, it follows from Perron's method. Let $u_0 : M\to \R$ be a critical solution. One verifies that the function $\bar u = u_0+\|u_0\|_\infty \geqslant 0$ is a supersolution to \eqref{HJ-l} and that $\underline u = u_0-\|u_0\|_\infty \leqslant 0$ is a subsolution to \eqref{HJ-l}. By the first point any subsolution $u$ verifies $u\leqslant \bar u$. Moreover, the set $\mathcal S_\lambda$  of subsolutions $u$ to \eqref{HJ-l} such that $\underline u \leqslant u \leqslant \bar u$ is not empty as it contains $\underline u$. The set $\mathcal S_\lambda$ is made of equiLipschitz functions as they verify for almost every $x\in M$,
   $$H(x,D_x u ) \leqslant c_H - \lambda\alpha(x) u(x)\leqslant c_H +\lambda \|\alpha\|_\infty \|\underline u\|_\infty. $$
   A common Lipschitz constant for elements of $\mathcal S_\lambda$ is 
   $$\kappa_{ c_H +\lambda \|\alpha\|_\infty \|\underline u\|_\infty}= \max\big\{ \|p\|_x, \ \ (x,p)\in H^{-1}\big((-\infty, c_H +\lambda \|\alpha\|_\infty \|\underline u\|_\infty ) \big)\big\}.$$
   Hence one verifies that the function $u_\lambda = \max\limits_{u\in \mathcal S_\lambda} u$ is well defined, Lipschitz and a solution to \eqref{HJ-l}.
   
   \end{enumerate}
\end{proof}

Before the next and fundamental subsection let us give a property on the family of solutions:
\begin{lemma}\label{boundedLip}
The family $(u_\lambda)_{\lambda \in (0,1]}$ is equibounded and equiLipschitz.
\end{lemma}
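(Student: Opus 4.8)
The plan is to exhibit two barriers that do not depend on $\lambda$ and then read off the Lipschitz bound from coercivity. First I would fix a critical solution $u_0$ of \eqref{HJ-crit} and reuse the two functions $\underline u = u_0 - \|u_0\|_\infty \leqslant 0$ and $\bar u = u_0 + \|u_0\|_\infty \geqslant 0$ already introduced in the proof of Proposition \ref{comp-l}. The key point I would emphasize is that neither of them depends on $\lambda$. Since $\underline u$ and $\bar u$ differ from $u_0$ by a constant they are still critical solutions of \eqref{HJ-crit}; combining this with $\lambda\alpha \geqslant 0$ and the signs $\underline u \leqslant 0 \leqslant \bar u$, one checks at every test point that $\lambda\alpha(x)\underline u(x) + H(x,D_x\underline u) \leqslant c_H$ and $\lambda\alpha(x)\bar u(x) + H(x,D_x\bar u) \geqslant c_H$. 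Hence $\underline u$ is a subsolution and $\bar u$ a supersolution of \eqref{HJ-l}, \emph{simultaneously for every} $\lambda > 0$.

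Second, I would invoke the comparison principle of Proposition \ref{comp-l}(\ref{comp-1}). Viewing $u_\lambda$ as a supersolution and $\underline u$ as a subsolution gives $\underline u \leqslant u_\lambda$; viewing $u_\lambda$ as a subsolution and $\bar u$ as a supersolution gives $u_\lambda \leqslant \bar u$. Because the two barriers are $\lambda$-independent, this produces the uniform bound $\|u_\lambda\|_\infty \leqslant 2\|u_0\|_\infty$ valid for every $\lambda > 0$, which is precisely equiboundedness (in fact on the whole of $(0,+\infty)$).

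Finally, for the Lipschitz estimate I would use that $u_\lambda$ is itself a subsolution of \eqref{HJ-l}. By Proposition \ref{aesub}, for almost every $x\in M$,
$$H(x,D_x u_\lambda) = c_H - \lambda\alpha(x)u_\lambda(x) \leqslant c_H + \lambda\|\alpha\|_\infty\|u_\lambda\|_\infty \leqslant c_H + 2\|\alpha\|_\infty\|u_0\|_\infty,$$
where the last inequality uses $\lambda \leqslant 1$ together with the bound just obtained. Thus every $u_\lambda$ with $\lambda \in (0,1]$ is a subsolution of the single, fixed equation $H(x,D_x u) = C$ with $C := c_H + 2\|\alpha\|_\infty\|u_0\|_\infty$, and coercivity (Proposition \ref{lip}, equivalently the constant $\kappa_C$ from \eqref{kappa}) furnishes the common Lipschitz constant $\kappa_C$. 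No step here is genuinely delicate; the only thing requiring care is that the barriers really are free of $\lambda$, since that is exactly what upgrades a family of pointwise comparisons into a uniform bound, and that the restriction $\lambda \leqslant 1$ is used precisely to absorb $\lambda\|\alpha\|_\infty\|u_\lambda\|_\infty$ into the fixed constant $C$.
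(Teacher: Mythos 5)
Your proof is correct and takes essentially the same route as the paper: the paper's own proof of Lemma \ref{boundedLip} likewise gets equiboundedness from the $\lambda$--independent barriers $\underline u \leqslant u_\lambda \leqslant \bar u$ constructed in the proof of Proposition \ref{comp-l}, and then obtains a common Lipschitz constant of the form $\kappa_{c_H+\|\alpha\|_\infty\|\underline u\|_\infty}$ for $\lambda\in(0,1]$ from the almost--everywhere subsolution inequality together with coercivity. The only cosmetic difference is that you re-derive the two-sided bound via the comparison principle, whereas the paper simply cites it from the Perron existence argument.
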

\begin{proof}
We have already seen that $\underline u\leqslant u_\lambda \leqslant \bar u$ for all $\lambda >0$. As far as the equiLipschitz property il concerned, if $0<\lambda \leqslant 1$ a common Lipschitz constant is  
$$\kappa_{ c_H + \|\alpha\|_\infty \|\underline u\|_\infty}= \max\big\{ \|p\|_x, \ \ (x,p)\in H^{-1}\big((-\infty, c_H + \|\alpha\|_\infty \|\underline u\|_\infty ) \big)\big\}.$$
\end{proof}

\subsection{Representation formula for $u_\lambda$}

Thanks to Lemma \ref{boundedLip} we know that for $\lambda<1$ all the functions $u_\lambda$ are equiLipschitz as well as critical solutions. Hence only the knowledge of $H$ on the compact set  
$$\{(x,p)\in T^*M , \quad \|p\|_x\leqslant \kappa_{ c_H + \|\alpha\|_\infty \|\underline u\|_\infty}\}$$
is relevant for our study. Hence up to modifying $H$ outside of this compact set, we assume without loss of generality, until the end of the article,  that $H$ satisfies hypothesis (H2') the superlinearity condition. We will now apply results of section \ref{superlinear}.

We fix once more $\lambda\in(0,1]$ until the end of this section.

\begin{nota}\label{notation}
If $\gamma : I\to M$ is a continuous curve defined on an interval containing $0$ and if $s\in I$ we denote  
$$A_\gamma (s) =  \int_{0}^s \alpha\circ\gamma(\sigma) d\sigma.$$
\end{nota}

We start by a representation formula in the spirit of Theorem \ref{weakKAM}.

\begin{Th}\label{representation}
The function $u_\lambda $ verifies the following properties:
\begin{enumerate}
\item For all $x\in M$ and $t>0$,
\begin{multline}\label{LOl}
u_\lambda(x) = \min_{\substack{\gamma \in AC([-t,0],M   )\\ \gamma(0)=x}} \Bigg \{   \exp\big(\lambda A_\gamma (-t) \big)u_\lambda\big(\gamma(-t)\big) \\
 +\int_{-t}^0 \exp\big(\lambda A_\gamma (s) \big) \left[L\big(\gamma(s),\dot\gamma(s)\big)+c_H\right] ds\Bigg\}.
\end{multline}

\item For all $x\in M$ there exists a Lipschitz curve $\gamma_{\lambda,x} : (-\infty,0] \to M$ with $\gamma_{\lambda,x}(0)=x$ such that for all $t>0$,
\begin{multline}\label{LOl=}
u_\lambda(x) = \exp\left(\lambda A_{\gamma_{\lambda,x}} (-t) \right) u_\lambda\big(\gamma_{\lambda,x}(-t)\big)  \\
+\int_{-t}^0 \exp\left(\lambda A_{\gamma_{\lambda,x}} (s) \right)\left[ L\big(\gamma_{\lambda,x}(s),\dot\gamma_{\lambda,x}(s)\big)+c_H\right] ds.
\end{multline}

\end{enumerate}

\end{Th}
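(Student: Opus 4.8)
The plan is to regard the right-hand side of \eqref{LOl} as a nonlinear Lax--Oleinik operator and to prove that $u_\lambda$ is its unique fixed point, then to extract the calibrated curve of part (2) by a compactness argument. For a continuous $w:M\to\R$ and $t>0$, introduce the value functional
\[
T^\lambda_t w(x)=\inf_{\substack{\gamma\in AC([-t,0],M)\\ \gamma(0)=x}}\Big\{\exp\big(\lambda A_\gamma(-t)\big)\,w\big(\gamma(-t)\big)+\int_{-t}^0\exp\big(\lambda A_\gamma(s)\big)\big[L(\gamma,\dot\gamma)+c_H\big]\,ds\Big\},
\]
with $A_\gamma$ as in Notation \ref{notation}. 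Since $\alpha\ge 0$ is bounded, the weight $\exp(\lambda A_\gamma(s))$ stays in $[e^{-\lambda\|\alpha\|_\infty t},1]$ on $[-t,0]$; with the superlinearity of $L$ and the boundedness of $u_\lambda$ (Lemma \ref{boundedLip}) the functional is coercive and lower semicontinuous, so by Clarke--Vinter \cite[Theorem 16.18]{Cl13} the infimum defining $T^\lambda_t u_\lambda(x)$ is attained by a Lipschitz minimizer. Two elementary facts will be used repeatedly: $T^\lambda_t$ is order preserving, because the weight is positive, and it satisfies the semigroup identity $T^\lambda_{t+t'}=T^\lambda_t\circ T^\lambda_{t'}$, which follows by splitting the action integral at time $-t$ and using the cocycle relation $A_\gamma(s)=A_\gamma(-t)+A_{\tilde\gamma}(s+t)$ for the shifted curve $\tilde\gamma(\cdot)=\gamma(\cdot-t)$, the factor $\exp(\lambda A_\gamma(-t))$ then factoring out of the inner minimization.

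First I would prove $u_\lambda\le T^\lambda_t u_\lambda$. Fix $\gamma$ with $\gamma(0)=x$. As $u_\lambda$ is a subsolution of \eqref{HJ-l}, Proposition \ref{approx} applied to $G(x,p,r)=\lambda\alpha(x)r+H(x,p)-c_H$ gives smooth $w_\varepsilon$ with $\|u_\lambda-w_\varepsilon\|_\infty<\varepsilon$ and $\lambda\alpha(x)w_\varepsilon(x)+H(x,D_xw_\varepsilon)\le c_H+\varepsilon$. The function $g_\varepsilon(s)=\exp(\lambda A_\gamma(s))\,w_\varepsilon(\gamma(s))$ is absolutely continuous, with $g_\varepsilon'(s)=\exp(\lambda A_\gamma(s))\big[\lambda\alpha(\gamma(s))w_\varepsilon(\gamma(s))+D_{\gamma(s)}w_\varepsilon(\dot\gamma(s))\big]$ almost everywhere. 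Bounding $D_\gamma w_\varepsilon(\dot\gamma)\le H(\gamma,D_\gamma w_\varepsilon)+L(\gamma,\dot\gamma)$ by the Fenchel inequality (Proposition \ref{fenchel}) and inserting the approximate subsolution inequality yields $g_\varepsilon'(s)\le\exp(\lambda A_\gamma(s))[L(\gamma,\dot\gamma)+c_H+\varepsilon]$. Integrating over $[-t,0]$ and letting $\varepsilon\to 0$ gives exactly $u_\lambda(x)\le\exp(\lambda A_\gamma(-t))u_\lambda(\gamma(-t))+\int_{-t}^0\exp(\lambda A_\gamma(s))[L+c_H]\,ds$, and taking the infimum over $\gamma$ gives $u_\lambda\le T^\lambda_t u_\lambda$.

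The heart of the matter is the reverse inequality. Write $U(\cdot,t)=T^\lambda_t u_\lambda$. The previous bound says $u_\lambda\le U(\cdot,s)$ for every $s>0$; applying the monotone $T^\lambda_t$ and the semigroup identity gives $U(\cdot,t)\le U(\cdot,t+s)$, so $t\mapsto U(x,t)$ is nondecreasing. I claim this forces each $U(\cdot,t)$ to be a subsolution of \eqref{HJ-l}. Let $\phi$ be $C^1$ with $U(\cdot,t)-\phi$ attaining a local maximum at $x_0$. For small $h>0$ the semigroup identity gives $U(x_0,t)=T^\lambda_h U(\cdot,t-h)(x_0)$; testing with $\gamma_h(s)=\exp_{x_0}(sv)$ for fixed $v\in T_{x_0}M$, using monotonicity $U(\cdot,t-h)\le U(\cdot,t)$ and the maximum condition $U(\gamma_h(-h),t)\le\phi(\gamma_h(-h))+U(x_0,t)-\phi(x_0)$, one gets
\[
U(x_0,t)\big[1-\exp(\lambda A_{\gamma_h}(-h))\big]\le\exp(\lambda A_{\gamma_h}(-h))\big[\phi(\gamma_h(-h))-\phi(x_0)\big]+\int_{-h}^0\exp(\lambda A_{\gamma_h}(s))\big[L(\gamma_h,\dot\gamma_h)+c_H\big]\,ds.
\]
Dividing by $h$ and letting $h\to 0^+$ (so that $h^{-1}(1-\exp(\lambda A_{\gamma_h}(-h)))\to\lambda\alpha(x_0)$, $h^{-1}(\phi(\gamma_h(-h))-\phi(x_0))\to-D_{x_0}\phi(v)$, and the integral average tends to $L(x_0,v)+c_H$) gives $\lambda\alpha(x_0)U(x_0,t)\le L(x_0,v)-D_{x_0}\phi(v)+c_H$; infimizing over $v$ and recognizing $H(x_0,\cdot)$ as the Legendre transform of $L(x_0,\cdot)$ (Proposition \ref{fenchel}) yields $\lambda\alpha(x_0)U(x_0,t)+H(x_0,D_{x_0}\phi)\le c_H$. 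Thus $U(\cdot,t)$ is a subsolution and $u_\lambda$ a supersolution of \eqref{HJ-l}, so the comparison principle (Proposition \ref{comp-l}(\ref{comp-1})) gives $U(\cdot,t)\le u_\lambda$; combined with the first inequality this proves $U(\cdot,t)=u_\lambda$, which is \eqref{LOl}, the minimum being attained as noted above. This viscosity-subsolution argument for $U(\cdot,t)$, and in particular the small-time expansion with the \emph{path-dependent} weight, is the step I expect to be most delicate.

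Finally, for the calibrated curve of part (2), I would take $t_n\to+\infty$ and, for each $n$, a minimizer $\gamma_n:[-t_n,0]\to M$ realizing \eqref{LOl} at time $t_n$. By the optimality (Bellman) principle the restriction of $\gamma_n$ to any fixed $[-T,0]$ is again a minimizer of the corresponding finite-horizon problem, on which the weight is bounded between positive constants; superlinearity of $L$ then gives, exactly as in the computation following Theorem \ref{weakKAM}, a Lipschitz bound independent of $n$. Arzel\`a--Ascoli and a diagonal extraction produce a Lipschitz limit $\gamma_{\lambda,x}:(-\infty,0]\to M$ with $\gamma_{\lambda,x}(0)=x$. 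For each fixed $t$, lower semicontinuity of the action together with the continuity of $L$, $\alpha$ and $u_\lambda$ shows that the action of $\gamma_{\lambda,x}$ on $[-t,0]$ is $\le u_\lambda(x)$, while the inequality already established gives $\ge u_\lambda(x)$; hence equality holds, which is precisely the calibration identity \eqref{LOl=}.
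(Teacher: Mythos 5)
Your strategy is genuinely different from the paper's. You work directly with the discounted, path--dependent Lax--Oleinik operator $T^\lambda_t$, prove $u_\lambda\leqslant T^\lambda_t u_\lambda$ by smooth approximation (Proposition \ref{approx}) plus the Fenchel inequality, then show that $T^\lambda_t u_\lambda$ is itself a viscosity subsolution of \eqref{HJ-l} via a small-time expansion along test curves, and conclude by the comparison principle of Proposition \ref{comp-l}. The paper instead freezes the zero-order term: it observes that $u_\lambda$ is a critical solution for the Hamiltonian $H_\lambda(x,p)=\lambda\alpha(x)u_\lambda(x)+H(x,p)$, imports the subsolution inequality (Proposition \ref{sub}) and the calibrated curve (the last characterization in Theorem \ref{weakKAM}) from classical weak KAM theory applied to $H_\lambda$, and converts these unweighted identities into the exponentially weighted ones by Gronwall's inequality and an explicit linear-ODE integrating factor. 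Your route is more self-contained on the PDE side and never needs Theorem \ref{weakKAM}; its cost is that every variational ingredient (existence and regularity of minimizers) must be re-proved by hand for the weighted functional, which is precisely what the paper's frozen-coefficient reduction provides for free. Incidentally, the step you flag as the most delicate, the subsolution property of $T^\lambda_t u_\lambda$, is in fact correct as written.

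The genuine gap is the attainment of the minimum. You invoke Clarke--Vinter for the functional $\gamma\mapsto \exp\big(\lambda A_\gamma(-t)\big)u_\lambda\big(\gamma(-t)\big)+\int_{-t}^0\exp\big(\lambda A_\gamma(s)\big)\big[L+c_H\big]ds$, but this is not of the form ``integral of $\Lambda\big(s,\gamma(s),\dot\gamma(s)\big)$ plus terminal cost'': the weight $\exp\big(\lambda A_\gamma(s)\big)$ depends on the whole arc of $\gamma$ between $s$ and $0$, so the cited Tonelli-type theorem does not apply as stated. This is load-bearing twice over: it is the ``min'' asserted in \eqref{LOl}, and your whole construction in part (2) starts from finite-horizon \emph{minimizers} $\gamma_n$ (approximate minimizers would not do, since they carry no Lipschitz bounds). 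The gap is repairable, either by running the direct method by hand (on $[-t,0]$ the weight lies in $[e^{-\lambda\|\alpha\|_\infty t},1]$ and converges uniformly along a minimizing sequence, while the integrand is convex in the velocity), or by adding the auxiliary state $z(s)=A_\gamma(s)$, $\dot z=\alpha(\gamma)$, and invoking an existence theorem for extended-valued Lagrangians on $M\times\R$; but as written the citation does not cover the claim. Two related points need care. First, your assertion that the minimizers are equi-Lipschitz ``exactly as in the computation following Theorem \ref{weakKAM}'' is too quick: that computation has no exponential weight, and transplanting it verbatim produces constants that degenerate as the horizon grows, since the weight on $[-T,0]$ is only bounded below by $e^{-\lambda\|\alpha\|_\infty T}$; you must re-base time using the Bellman property so the weight restarts on each unit interval (compare Remark \ref{remsg}), or accept $T$-dependent constants in the diagonal extraction and recover the global Lipschitz bound of the limit curve a posteriori from the calibration identity. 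Second, applying the comparison principle requires $T^\lambda_t u_\lambda$ to be continuous, which again follows from the minimizer analysis and so belongs to the same cluster of variational facts your write-up takes for granted.
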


\begin{proof}
We will prove both points simultaneously. The function $u_\lambda $ is a solution to the Hamilton--Jacobi equation 
$$H_\lambda(x,D_x u) =c_H, \quad \quad x \in M,$$
where $H_\lambda (x,p) = \lambda \alpha(x) u_\lambda(x) + H(x,p) $.
The Hamiltonian $H_\lambda$ satisfies hypotheses (H1) and (H2') hence by Proposition \ref{sub} for all $x\in M$, $T>0$ and $\gamma : [-T,0]\to M$ absolutely continuous curve such that $\gamma(0)=x$ we find that for $0\leqslant t\leqslant T$,
$$u_\lambda\big(x)\leqslant u_\lambda\big(\gamma(-t)\big)+ \int_{-t}^0 \Big[L_\lambda\big(\gamma (s),\dot\gamma(s)\big) +c_H\Big]ds,$$
where one computes easily that $L_\lambda$, the Lagrangian associated to $H_\lambda$ is 
$$\forall (y,v)\in TM,\quad L_\lambda(y,v) = L(y,v)-\lambda \alpha(y) u_\lambda(y) .$$
Setting $f(t) =- u_\lambda\big(\gamma(-t)\big)$ one finds by a change of variables $s\mapsto -s$
$$f(t)\leqslant f(0) +\int_0^t  \Big[L\big(\gamma (-s),\dot\gamma(-s)\big) +c_H+ \lambda \alpha\big(\gamma(-s)\big) f(s)\Big]ds. $$
Recalling that $\alpha\geqslant 0$ we may apply Gronwall's inequality (\cite[Lemma 2.1]{mis}), thus obtaining
\begin{multline*}
f(t)\leqslant \exp\left(\lambda \int_{0}^t \alpha\circ\gamma(-\sigma) d\sigma \right)  f(0)\\
 +\int_0^t     \Big[L\big(\gamma (-s),\dot\gamma(-s)\big) +c_H \Big]\exp    \left(\lambda \int_{s}^t \alpha\circ\gamma(-\sigma) d\sigma \right)  ds.   
 \end{multline*}
After dividing by $\exp\left(\lambda \int_{0}^t \alpha\circ\gamma(-\sigma) d\sigma \right) $ and making the changes of variables $\sigma \to -\sigma $ and $s\to -s$ we find that
\begin{equation}\label{ssdeg}
u_\lambda(x) \leqslant \exp\big(\lambda A_\gamma (-t) \big)u_\lambda\big(\gamma(-t)\big) +\int_{-t}^0 \exp\big(\lambda A_\gamma (s) \big) \big[L\big(\gamma(s),\dot\gamma(s)\big)+c_H\big] ds.
\end{equation}

To finish the proof, we will directly establish the second point thus proving that \eqref{LOl} is  valid and that the right hand side is indeed a minimum. We use here item \ref{calib} of Theorem \ref{weakKAM} (with Hamiltonian $H_\lambda$)  to obtain a Lipschitz curve $\gamma_{\lambda,x} : (-\infty , 0] \to M$ such that  $\gamma_{\lambda,x}(0) = x$ and 

\begin{multline*}
u_\lambda(x) - u_\lambda\big(\gamma_{\lambda,x}(-t)\big) = \int_{-t}^0\Big[L_\lambda\big(\gamma_{\lambda,x} (\sigma),\dot\gamma_{\lambda,x}(\sigma)\big) +c_H\Big]d\sigma \\
= \int_{-t}^0\Big[L\big(\gamma_{\lambda,x} (\sigma),\dot\gamma_{\lambda,x}(\sigma)\big) +c_H-\lambda \alpha\big(\gamma_{\lambda,x} (\sigma)\big)    u_\lambda  \big(\gamma_{\lambda,x} (\sigma)\big)  \Big]d\sigma,
\end{multline*}
for all $t>0$.
Hence we find that for $t>0$,
$$g(t) =g(0) + \int_0^t h(s)g(s)ds+\int_0^t \ell(s),$$
where $g(s) =- u_\lambda\big(\gamma_{\lambda,x}(-s)\big) $, $h(s) = \lambda \alpha\big(\gamma_{\lambda,x} (-s)\big)$ and 
$$\ell(s) = L\big(\gamma_{\lambda,x} (-s),\dot\gamma_{\lambda,x}(-s)\big) +c_H.$$
We infer from this,
\footnote{The proof of this last fact follows exactly the classical proof of the Cauchy--Lipschitz Theorem. Fix $T>0$ and let $\mathcal F$ the operator from $\big(C^0([0,T],\R],\|\cdot\|_\infty\big)$ to itself defined  for $f\in C^0([0,T],\R]$ by 
$$\forall t\in [0,T], \quad \mathcal F (f) (t) = g(0) +  \int_0^t h(s)f(s)ds+\int_0^t \ell(s).$$
This functional $\mathcal F$ is $T\|h\|_\infty$-Lipschitz hence for $T$ small enough it is a contraction hence has a unique fixed point. This proves local existence of a solution. As the time of existence is constant, the maximal solution is defined for all $t>0$. Finally one checks by a direct computation that the given function is solution.}

$$\forall t>0, \quad g(t) = g(0)\exp\left(\int_0^t h(s) ds\right) +\int_0^t \ell(s)\exp\left( \int_s^t h(\sigma)d\sigma \right) ds,$$
that is the announced formula.
\end{proof}
\begin{oss}\label{remsg}
Note that, if $0<s<t$ by subtracting \eqref{LOl=} applied to $s$ and $t$ one obtains
\begin{multline}\label{LOlst}
\exp\left(\lambda A_{\gamma_{\lambda,x}} (-s) \right)u_\lambda\big(\gamma_{\lambda,x}(-s)\big) =\exp\left(\lambda A_{\gamma_{\lambda,x}} (-t) \right) u_\lambda\big(\gamma_{\lambda,x}(-t)\big)  \\
+\int_{-t}^{-s} \exp\left(\lambda A_{\gamma_{\lambda,x}} (s) \right)\left[ L\big(\gamma_{\lambda,x}(s),\dot\gamma_{\lambda,x}(s)\big)+c_H\right] ds.
\end{multline}
In particular, the curve $\gamma_{\lambda,x}(-s+\cdot)$ realizes equality in \eqref{LOl=} for the point $\gamma_{\lambda,x}(-s)$.
\end{oss}

We continue by giving  important properties on the  curves $\gamma_{\lambda,x}$ and there asymptotics.

\begin{prop}\label{equiLip}
The curves $\gamma_{\lambda,x}$ with $\lambda\in (0,1]$ and $x\in M$ are equiLipschitz.
\end{prop}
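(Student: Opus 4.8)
The plan is to reduce the claim to a \emph{uniform} short--time estimate for the calibrated curves and then concatenate. First I would record the uniform data provided by Lemma \ref{boundedLip}: a common Lipschitz constant $\kappa := \kappa_{c_H+\|\alpha\|_\infty\|\underline u\|_\infty}$ and a uniform bound $R := \max(\|\bar u\|_\infty,\|\underline u\|_\infty)$, so that $|u_\lambda|\leqslant R$ for all $\lambda\in(0,1]$. Exactly as in the computation following Theorem \ref{weakKAM}, superlinearity of $L$ furnishes a constant $A_1\geqslant 0$ with $L(x,v)+c_H\geqslant (\kappa+1)\|v\|_x-A_1$ for all $(x,v)\in TM$. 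The starting point is the identity \eqref{LOlst} obtained in Remark \ref{remsg}: writing $w(\sigma):=\exp\big(\lambda A_{\gamma_{\lambda,x}}(\sigma)\big)$, for every $0\leqslant s<t$ one has
\[
w(-s)\,u_\lambda\big(\gamma_{\lambda,x}(-s)\big)=w(-t)\,u_\lambda\big(\gamma_{\lambda,x}(-t)\big)+\int_{-t}^{-s}w(\sigma)\big[L\big(\gamma_{\lambda,x}(\sigma),\dot\gamma_{\lambda,x}(\sigma)\big)+c_H\big]\,d\sigma .
\]
Since $\alpha\geqslant 0$, the weight $w$ is nondecreasing and satisfies $w\leqslant 1$ on $(-\infty,0]$.

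Next I would fix $0\leqslant s<t$ with $h:=t-s$ small and \emph{divide the identity by} $w(-s)$, thereby normalizing the weight to equal $1$ at the right endpoint. Abbreviating $d:=d\big(\gamma_{\lambda,x}(-t),\gamma_{\lambda,x}(-s)\big)$, I bound the left--hand side from above using the common Lipschitz constant together with $|u_\lambda|\leqslant R$ and the elementary inequality $1-w(-t)/w(-s)\leqslant \lambda\|\alpha\|_\infty h\leqslant \|\alpha\|_\infty h$, which produces a bound $\kappa\, d+\|\alpha\|_\infty R\,h$. For the right--hand side I use superlinearity, the fact that the normalized weight $w(\sigma)/w(-s)$ lies in $\big[\exp(-\|\alpha\|_\infty h),1\big]$ for $\sigma\in[-t,-s]$ (here $\lambda\leqslant 1$ is used), and that the length of $\gamma_{\lambda,x}$ over $[-t,-s]$ dominates $d$; this yields the lower bound $(\kappa+1)e^{-\|\alpha\|_\infty h}\,d-A_1 h$. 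Equating the two sides gives
\[
\big[(\kappa+1)e^{-\|\alpha\|_\infty h}-\kappa\big]\,d\leqslant \big(A_1+\|\alpha\|_\infty R\big)\,h .
\]

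Finally, since the bracket tends to $(\kappa+1)-\kappa=1$ as $h\to0$, I would fix once and for all a uniform $h_0>0$ (depending only on $\kappa$ and $\|\alpha\|_\infty$) for which the bracket stays above some $c_0>0$ whenever $0<h\leqslant h_0$. This gives $d\big(\gamma_{\lambda,x}(-t),\gamma_{\lambda,x}(-s)\big)\leqslant C_0\,(t-s)$ with $C_0:=(A_1+\|\alpha\|_\infty R)/c_0$ for every subinterval of $(-\infty,0]$ of length at most $h_0$, the constant $C_0$ being independent of $\lambda$ and $x$. Splitting an arbitrary interval into pieces of length $\leqslant h_0$ and summing via the triangle inequality upgrades this to the global bound $d\big(\gamma_{\lambda,x}(-t),\gamma_{\lambda,x}(-s)\big)\leqslant C_0|t-s|$, the desired equi--Lipschitz estimate. \textbf{The main obstacle} is precisely that, in contrast with the classical calibration estimate recalled after Theorem \ref{weakKAM}, the weights $w(\sigma)$ degenerate to $0$ as $\sigma\to-\infty$, so a single global application of the argument loses all uniformity; the remedy is the renormalization by $w(-s)$ together with localization to short intervals, on which the normalized weight is uniformly close to $1$ (uniformly for $\lambda\leqslant 1$) and the decisive cancellation $(\kappa+1)-\kappa=1$ survives.
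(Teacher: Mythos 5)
Your proof is correct, but it follows a genuinely different route from the paper's. The paper exploits the fact that $\gamma_{\lambda,x}$ is calibrated for the modified Hamiltonian $H_\lambda(x,p)=H(x,p)+\lambda\alpha(x)u_\lambda(x)$: it works with the \emph{unweighted} identity \eqref{calibl}, in which the discount enters only as the additive perturbation $-\lambda\alpha(x)u_\lambda(x)$ of the Lagrangian, uniformly bounded by $K\|\alpha\|_\infty$ for $\lambda\in(0,1]$. The classical computation following Theorem \ref{weakKAM} then applies verbatim, with $A(K)$ replaced by $A(K)+K\|\alpha\|_\infty$, and produces a global Lipschitz constant in one stroke, with no localization. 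You instead start from the weighted identity \eqref{LOlst} of Remark \ref{remsg}, where the discount appears as the multiplicative weight $\exp\big(\lambda A_{\gamma_{\lambda,x}}\big)$; since these weights degenerate as $s\to-\infty$, you are forced to renormalize at an endpoint, restrict to intervals of length $h_0$ on which the bracket $(\kappa+1)e^{-\|\alpha\|_\infty h}-\kappa$ stays positive, and then concatenate. Both arguments are sound; the paper's is shorter and gives an explicit constant because the additive form of the perturbation preserves the classical argument globally, while yours stays entirely within the weighted representation formulas (at the cost of the localization step and a less explicit constant through $h_0$ and $c_0$). One step you should write out explicitly: in your lower bound for the right-hand side, the integrand $L+c_H$ need not be nonnegative, so the estimate $w(\sigma)/w(-s)\geqslant e^{-\|\alpha\|_\infty h}$ cannot be applied to it as a whole; split $L+c_H=\big(L+c_H+A_1\big)-A_1$, apply the lower weight bound to the nonnegative part $L+c_H+A_1\geqslant(\kappa+1)\|\dot\gamma_{\lambda,x}\|_{\gamma_{\lambda,x}}$ and the upper bound $w(\sigma)/w(-s)\leqslant 1$ to the constant $-A_1$; this yields exactly your stated bound $(\kappa+1)e^{-\|\alpha\|_\infty h}\,d-A_1h$.
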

\begin{proof}
Recall that by construction, the curves $\gamma_{\lambda,x}$ verify for $s\leqslant t\leqslant 0$,
\begin{equation}\label{calibl}
u_\lambda\big(\gamma_{\lambda,x}(t)\big) - u_\lambda\big(\gamma_{\lambda,x}(s)\big) = \int_s^t\Big[L_\lambda\big(\gamma_{\lambda,x} (\sigma),\dot\gamma_{\lambda,x}(\sigma)\big) +c_H\Big]d\sigma,
\end{equation}
where $L_\lambda : (x,v)\mapsto L(x,v) - \lambda\alpha(x)u_\lambda(x)$ is the Lagrangian associated to the Hamiltonian $H_\lambda : (x,p)\mapsto h(x,p)+\lambda\alpha(x)u_\lambda(x)$.
Recall that $(u_\lambda)_{\lambda\in (0,1]}$ are equiLipschitz, by Lemma \ref{boundedLip}, with constant that we denote $K>0$. We assume also  by Lemma \ref{boundedLip} that $K$ is big enough such that $\|u_\lambda \|_\infty <K$ for $\lambda \in (0,1]$. Moreover, by superlinearity of $L$, the constant
$$A(K) = \max\left\{ (K+1)\|v\|_x - L(x,v)-c_H , \ \ (x,v)\in TM  \right\}  $$
is such that 
 $$\forall (x,v)\in TM,\quad L(x,v) +c_H\geqslant (K+1)\|v\|_x -A(K).$$
 One then computes using \eqref{calibl} that
 \begin{multline*}
Kd\big(\gamma_{\lambda,x}(t),\gamma_{\lambda,x}(s)\big) 
  \geqslant \int_s^t\left[ (K+1)\|\dot\gamma_{\lambda,x}(\sigma)\|_{\gamma_{\lambda,x}(\sigma)} -A(K) - \lambda \|\alpha\|_\infty \|u_\lambda\|_\infty  \right]d \sigma. \\
  \geqslant (K+1)d\big(\gamma_{\lambda,x}(t),\gamma_{\lambda,x}(s)\big) - (t-s)\big(A(K)+K\|\alpha\|_\infty\big)  .
 \end{multline*}
 Hence we conclude that $d\big(\gamma_{\lambda,x}(t),\gamma_{\lambda,x}(s)\big) \leqslant (t-s)\big(A(K)+K\|\alpha\|_\infty\big)$ and that $\gamma_{\lambda,x}$ is indeed Lipschitz with constant $\big(A(K)+K\|\alpha\|_\infty\big)$ that is independent  of $x\in M$ and $\lambda\in (0,1]$.

\end{proof}

\begin{prop}\label{excursion}
 There exists $T>0$ and $\varepsilon>0$ such that 
for all $x\in M$, $\lambda \in (0,1]$ and $\gamma_{\lambda,x}$ be given by the previous Theorem.
$$\forall t>0, \exists t'\in [t,t+T] , \quad \alpha \circ   \gamma_{\lambda,x}(-t') >\varepsilon.$$

\end{prop}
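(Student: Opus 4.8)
The plan is to argue by contradiction, playing the strict critical subsolution $v$ of Theorem \ref{strict} against the calibration identity satisfied by $\gamma_{\lambda,x}$. The heuristic is that wherever $\alpha$ is small the discount term $\lambda\alpha u_\lambda$ is negligible, so $\gamma_{\lambda,x}$ behaves almost like a calibrated curve for the critical equation; but $v$ is strictly subcritical off $\A$ while $\alpha$ is positive on $\A$, and a calibrated curve cannot spend a long time in $\{\alpha\le\varepsilon\}$ without forcing $u_\lambda-v$ to grow unboundedly. To set up constants: since $\alpha$ is continuous and positive on the compact set $\A$ (hypothesis ($\alpha$2)), the number $2\delta_0:=\min_\A\alpha$ is positive and the closed set $\{\alpha\le\delta_0\}$ is disjoint from $\A$. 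If this set is empty then $\alpha>\delta_0$ everywhere and the statement is immediate, so assume it is not. As $v$ is $C^\infty$ and strict on $M\setminus\A$, the continuous map $x\mapsto H(x,D_xv)$ attains on $\{\alpha\le\delta_0\}$ a maximum $c_H-\theta$ with $\theta>0$. Let $K\ge\sup_{\lambda\in(0,1]}\|u_\lambda\|_\infty$ (Lemma \ref{boundedLip}) and $C:=K+\|v\|_\infty$, so that $\|u_\lambda-v\|_\infty\le C$ uniformly in $\lambda$.

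For the core estimate, suppose that for some $x$, $\lambda\in(0,1]$, $t>0$ and some $\varepsilon\le\delta_0$ one has $\alpha(\gamma_{\lambda,x}(-t'))\le\varepsilon$ for all $t'\in[t,t+T]$; write $\gamma=\gamma_{\lambda,x}$, $\sigma_1=-t-T$, $\sigma_2=-t$, so that $\gamma$ stays in $\{\alpha\le\delta_0\}$ on $[\sigma_1,\sigma_2]$. On one hand, the calibration identity \eqref{calibl} (recall $L_\lambda=L-\lambda\alpha u_\lambda$) reads
$$u_\lambda(\gamma(\sigma_2))-u_\lambda(\gamma(\sigma_1))=\int_{\sigma_1}^{\sigma_2}\big[L(\gamma,\dot\gamma)+c_H-\lambda\,\alpha(\gamma)\,u_\lambda(\gamma)\big]\,d\sigma.$$
On the other hand, the Fenchel inequality (Proposition \ref{fenchel}) applied to the $C^1$ function $v$ along the Lipschitz curve $\gamma$ (Proposition \ref{equiLip}), together with $H(\gamma(\sigma),D_{\gamma(\sigma)}v)\le c_H-\theta$ on the interval, yields
$$v(\gamma(\sigma_2))-v(\gamma(\sigma_1))=\int_{\sigma_1}^{\sigma_2}D_{\gamma(\sigma)}v(\dot\gamma)\,d\sigma\le\int_{\sigma_1}^{\sigma_2}\big[L(\gamma,\dot\gamma)+c_H-\theta\big]\,d\sigma.$$
Subtracting, and bounding $|\lambda\,\alpha(\gamma)\,u_\lambda(\gamma)|\le\varepsilon K$ (using $\lambda\le 1$, $\alpha\le\varepsilon$, $\|u_\lambda\|_\infty\le K$), I obtain
$$(u_\lambda-v)(\gamma(\sigma_2))-(u_\lambda-v)(\gamma(\sigma_1))\ge(\theta-\varepsilon K)\,T.$$

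It then suffices to choose $\varepsilon:=\min(\delta_0,\theta/(2K))$, which makes the right-hand side at least $\tfrac{\theta}{2}T$, while the left-hand side never exceeds $2C$ by the uniform bound. Hence the hypothesis of the core estimate forces $T\le 4C/\theta$, so fixing any $T>4C/\theta$ makes it untenable — and this is precisely the assertion that some $t'\in[t,t+T]$ satisfies $\alpha(\gamma_{\lambda,x}(-t'))>\varepsilon$. I expect the only genuinely delicate point to be the balancing of the discount perturbation against the strict gain: one must define the bad region by $\alpha\le\varepsilon$ with the very same small $\varepsilon$ that controls $\lambda\alpha u_\lambda$, so that the fixed (and $\lambda$-independent) subcriticality margin $\theta$ of $v$ is not swamped by the discount term. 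Everything else is a uniform comparison whose constants depend on neither $x$ nor $\lambda$, which is exactly why the resulting $T$ and $\varepsilon$ can be taken uniform.
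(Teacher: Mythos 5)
Your proof is correct, but it takes a genuinely different route from the paper's. The paper argues by contradiction plus compactness of measures: assuming the statement fails along a sequence, it builds discounted occupation measures $\mu_n$ (weighted by $\exp(\lambda_n A_n(s))$) on longer and longer time intervals where $\alpha\circ\gamma_n\leqslant 1/n$, extracts a weak-$*$ limit, and shows this limit is a closed minimizing (Mather) measure whose projection is supported in $\alpha^{-1}(\{0\})$ --- contradicting Proposition \ref{inclusion} ($\MM\subset\A$) together with hypothesis ($\alpha$2). You instead run the strict subsolution $v$ of Theorem \ref{strict} directly along the calibrated curve: subtracting the Fenchel/strictness bound for $v$ from the calibration identity \eqref{calibl} forces $(u_\lambda-v)\circ\gamma_{\lambda,x}$ to grow at rate at least $\theta-\varepsilon K>0$ on any interval spent in $\{\alpha\leqslant\varepsilon\}$, while this quantity is uniformly bounded by $2(K+\|v\|_\infty)$. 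In substance both proofs rest on Theorem \ref{strict} --- the paper channels it through measures (it is the key ingredient of Proposition \ref{inclusion}), you channel it through curves --- but yours is more elementary (no weak-$*$ compactness, no extraction of subsequences) and quantitative: it yields explicit uniform constants $\varepsilon=\min(\delta_0,\theta/(2K))$ and any $T>4(K+\|v\|_\infty)/\theta$, whereas the paper's $T$ and $\varepsilon$ are non-explicit. What the paper's route buys in exchange is economy at the level of the whole article: the occupation-measure construction introduced there is reused essentially verbatim in Proposition \ref{mesure-lim} and in the conclusion of the proof of Theorem \ref{main}, so the machinery is needed anyway. One small point of rigor in your write-up: $v$ is only $C^\infty$ on $M\setminus\A$, not globally $C^1$; this is harmless in your argument because the curve segment under consideration stays in $\{\alpha\leqslant\delta_0\}$, a compact subset of the open set $M\setminus\A$, so the chain rule for $v\circ\gamma_{\lambda,x}$ and the bound $H(\cdot,D v)\leqslant c_H-\theta$ are legitimate there --- but the phrase ``the $C^1$ function $v$'' should be qualified accordingly.
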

\begin{proof}
We argue by contradiction and assume that for all $n>0$ there exists $x_n\in M$, $\lambda_n \in (0,1]$ and  $t_n>0$ such that 
$$\forall t\in [t_n,t_n+n] , \quad \alpha \circ   \gamma_{\lambda_n,x_n}(-t) \leqslant \frac1n.$$
We introduce the notations  $\gamma_n =  \gamma_{\lambda_n,x_n}$ and  $A_n = A_{  \gamma_{\lambda_n,x_n}}$ (with respect to Notation \ref{notation}).

Let us define probability  measures $\mu_n$ on $TM$ by
\begin{align*}
\int_{TM} f d\mu_n& =C_n \int_{-(t_n+n)}^{-t_n} \exp\big(\lambda_n A_{n} (s) \big)f\big(\gamma_n(s),\dot\gamma_n(s)\big) ds \\
& =C'_n \int_{-(t_n+n)}^{-t_n} \exp\big(\lambda_n A_n (s) -\lambda_n A_n (-t_n) \big)f\big(\gamma_n(s),\dot\gamma_n(s)\big) ds ,
\end{align*}
 for $f\in C^0(TM,\R)$, where $C_n = \big( \int_{-(t_n+n)}^{-t_n} \exp\big(\lambda_n A_n (s) \big)ds \big)^{-1}$ and 
 \begin{multline*}
 C'_n =  \left( \int_{-(t_n+n)}^{-t_n} \exp\big(\lambda_n A_n (s)-\lambda_n A_n (-t_n) \big)ds \right)^{-1} \\
  =  \left( \int_{-(t_n+n)}^{-t_n} \exp\left(-\lambda_n \int_{s}^{-t_n} \alpha \circ   \gamma_n(\sigma) d\sigma \right)ds \right)^{-1}.
  \end{multline*}
  Note that our hypothesis implies that $C_n' \to 0$. Indeed
$$\forall s\in [-t_n-n,-t_n],\quad  \exp\left(-\lambda_n \int_{s}^{-t_n} \alpha \circ   \gamma_n(\sigma) d\sigma \right)\geqslant \exp\left( \frac{\lambda_n}{n}(t_n+s)\right),
$$
$$(C_n')^{-1} \geqslant \int_{-n}^0 \exp\left( \frac{\lambda_n s}{n}\right)ds = \frac{n}{\lambda_n}\big(1-\exp(-\lambda_n)\big) \geqslant n(1-e^{-1}).
$$

As the $(\gamma_n)_n$ are equiLipschitz (Proposition \ref{equiLip}), the family $(\mu_n)_n$ are supported in a common compact subset of $TM$ and we may extract a weakly convergent subsequence $\mu_{k_n} \rightharpoonup \mu$. We will prove that $\mu $ is a Mather measure.

Let us introduce the notation $B_n(s) = \lambda_n A_n (s)-\lambda_n A_n (-t_n) $.

\textbf{The measure $\mu$ is closed: } let $f : M\to \R$ be a $C^1$ function, one computes (by integration by part):
 
\begin{align*}
 \int_{TM} D_xf(v) d\mu_n(x,v) &=C'_n \int_{-t_n-n}^{-t_n} \exp\big(B_n(s) \big)D_{\gamma_n(s)}f\big(\dot\gamma_n(s)\big) ds  \\
 &= C'_n \Big[ \exp\big(B_n(s)\big)  f\big( \gamma_n(s) \big) \Big]_{-t_n-n}^{-t_n} \\
& \qquad - C'_n  \int_{-t_n-n}^{-t_n} \frac{d}{ds} \Big(\exp\big(B_n(s) \big)  \Big)f\big( \gamma_n(s) \big)ds
 \end{align*}
The first term goes to $0$ as $C_n' \to 0$ and the functions $s\mapsto \exp\big(B_n(s)   \big)f\big( \gamma_n(s) \big)$ are equibounded for $s\leqslant 0$.

To handle the second term we infer
$$ \int_{-t_n-n}^{-t_n} \frac{d}{ds} \Big(\exp\big(B_n (s) \big)  \Big)f\big( \gamma_n(s) \big)ds = 
 \int_{-t_n-n}^{-t_n} \lambda_n \alpha\big(\gamma_n(s)\big) \exp\big(B_n(s) \big) f\big( \gamma_n(s) \big)ds.
$$
Hence, recalling our hypothesis,
$$\left|  \int_{-t_n-n}^{-t_n} \frac{d}{ds} \Big(\exp\big(B_n (s) \big)  \Big)f\big( \gamma_n(s) \big)ds\right| \leqslant  \frac{\lambda_n\|f\|_\infty}{n} \int_{-t_n-n}^{-t_n} \exp\big(B_n (s) \big)  ds= \frac{\lambda_n\|f\|_\infty}{nC_n'},$$
 it follows that
$$\lim_{n\to +\infty} C_n'  \int_{-t_n-n}^{-t_n} \frac{d}{ds} \Big(\exp\big(B_n(s) \big)  \Big)f\big( \gamma_n(s) \big)ds = 0.$$
Finally, taking the limit along the subsequence $k_n$, we conclude that
 $$\int_{TM} D_xf(v) d\mu(x,v)=0,$$
  hence $\mu $ is closed.

\textbf{The measure $\mu$ is minimizing: } indeed let us start from the equalities coming from Theorem \ref{representation} and Remark \ref{remsg}:
\begin{align*}
 \int_{TM} L(x,v) d\mu_n(x,v) & =  C'_n \int_{-t_n-n}^{-t_n} \exp\big(B_n(s) \big)L\big(\gamma_n(s),\dot\gamma_n(s)\big) ds \\
&=C_n'\Big(u_{\lambda_n}\big(\gamma_n(-t_n)\big)- \exp\big(B_n (-t_n-n) \big)   u_{\lambda_n}\big(\gamma_n(-t_n-n)\big)\Big) \\
&\qquad -\int_{TM}c_H d\mu_n.
\end{align*}
Once more, the first term converges to $0$ as $C_n'\to 0$ and the functions $(u_\lambda)_{\lambda\in (0,1]}$ and  $s\mapsto \exp\big(B_n(s)\big), s<0$ are equibounded (Lemma \ref{boundedLip}). Going along the subsequence $k_n$ we conclude that $\int_{TM}L(x,v)d\mu = -c_H$ as announced.

\textbf{The projected measure $\pi^*\mu$ is supported in $\alpha^{-1}(\{0\})$:} indeed, let $\varepsilon>0$ and $\chi_\varepsilon$ the indicatrix function of $\alpha^{-1}\big((\varepsilon,+\infty)\big) $. If $n>0$ is big enough such that  to verify $1/n<\varepsilon$ then clearly 
$$
\int_M \chi_\varepsilon d\pi^*\mu_n =  C'_n \int_{-t_n-n}^{-t_n} \exp\big(B_n(s) \big)\chi_\varepsilon\big(\gamma_n(s)\big) ds=0.
$$

This is a contradiction as the support of $\pi^*\mu$ should be included in the projected Aubry set by Proposition \ref{inclusion}. Hence we have proved the Proposition.
\end{proof}

As a first consequence we deduce

\begin{lemma}\label{courbe-aub}
Let $\lambda\in (0,1]$ and $x\in M$. Then $\int_{-\infty}^0 \alpha\big(\gamma_{\lambda,x} (s)\big)ds = +\infty$.
\end{lemma}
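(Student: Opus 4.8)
The plan is to establish the integral divergence directly from Proposition \ref{excursion}, which gives a uniform time gap $T$ and a uniform threshold $\varepsilon>0$ such that along every trajectory $\gamma_{\lambda,x}$, in each window $[-t',-t'+T]$ (shifted appropriately) there is a time where $\alpha\circ\gamma_{\lambda,x}$ exceeds $\varepsilon$. The key observation is that a single large value of $\alpha$ at one time does not immediately contribute to the integral, so I must convert the pointwise lower bound into a lower bound on a whole subinterval of definite length. This is where the equiLipschitz property of the curves (Proposition \ref{equiLip}) together with the (uniform) continuity of $\alpha$ becomes essential.

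First I would fix $\lambda\in(0,1]$ and $x\in M$ and recall that $\gamma_{\lambda,x}$ is Lipschitz with a constant $C$ independent of $\lambda$ and $x$. Since $\alpha$ is continuous on the compact manifold $M$, it is uniformly continuous; hence there exists $\rho>0$ such that $d(y,z)<\rho$ implies $|\alpha(y)-\alpha(z)|<\varepsilon/2$. Combining uniform continuity with the uniform Lipschitz bound, I get a time $\tau>0$ (namely $\tau=\rho/C$, independent of $\lambda$ and $x$) such that whenever $\alpha\circ\gamma_{\lambda,x}(-t')>\varepsilon$ at some time $-t'$, then $\alpha\circ\gamma_{\lambda,x}(s)>\varepsilon/2$ for all $s$ in an interval of length $\tau$ around $-t'$. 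Thus each ``excursion'' guaranteed by Proposition \ref{excursion} contributes at least $\tfrac{\varepsilon}{2}\cdot\tfrac{\tau}{2}$ to the integral $\int\alpha\circ\gamma_{\lambda,x}$ over a subinterval of length at least $\tau/2$ (taking a half-interval avoids boundary issues at the endpoints of each window).

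With this in hand I partition $(-\infty,0]$ into consecutive blocks of length $T$, say $J_k=[-kT,-(k-1)T]$ for $k\geqslant 1$. Applying Proposition \ref{excursion} with $t=(k-1)T$ produces a time $t'_k\in[(k-1)T,kT]$ with $\alpha\circ\gamma_{\lambda,x}(-t'_k)>\varepsilon$, and by the previous step a contribution of at least $\tfrac{\varepsilon\tau}{4}$ to the integral over a subinterval associated to that block. Taking $\tau$ smaller if necessary so that $\tau\leqslant T$ guarantees these contributing subintervals can be chosen within disjoint blocks (or more carefully, within blocks of size $2T$ to be safe about overlaps), so summing over $k=1,\dots,N$ yields
$$\int_{-NT'}^0 \alpha\big(\gamma_{\lambda,x}(s)\big)\,ds \geqslant N\cdot\frac{\varepsilon\tau}{4},$$
which tends to $+\infty$ as $N\to+\infty$, giving $\int_{-\infty}^0\alpha\big(\gamma_{\lambda,x}(s)\big)\,ds=+\infty$.

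The main obstacle, and the only genuinely delicate point, is the bookkeeping that turns the pointwise excursions into disjoint intervals of definite length contributing to the integral: one must ensure the subintervals on which $\alpha\circ\gamma_{\lambda,x}>\varepsilon/2$ do not get pushed outside the range of integration near the block endpoints and do not double-count. This is handled cleanly by choosing $\tau$ small relative to $T$ and by spacing the blocks (e.g. working with windows of length $2T$ rather than $T$) so that each guaranteed excursion sits comfortably inside its block. Everything else is a routine consequence of uniform continuity of $\alpha$ and the uniform Lipschitz bound on the $\gamma_{\lambda,x}$.
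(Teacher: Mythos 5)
Your proof is correct and takes essentially the same approach as the paper: both use Proposition \ref{excursion} to produce infinitely many excursion times, fatten each pointwise bound $\alpha\circ\gamma_{\lambda,x}(-t')>\varepsilon$ into a bound $\geqslant\varepsilon/2$ on an interval of definite length via the Lipschitz property of $\gamma_{\lambda,x}$ and the continuity of $\alpha$, and then sum the disjoint contributions. The only (cosmetic) difference is the disjointness bookkeeping: you space the windows in which you invoke Proposition \ref{excursion}, whereas the paper simply extracts a subsequence of excursion times $t_n$ with $t_{n+1}-t_n\geqslant 2$.
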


\begin{proof}
Thanks to the previous proposition there exists $\varepsilon>0$ and an increasing sequence $t_n\to +\infty$ such that
$ \alpha\big(\gamma_{\lambda,x}(-t_n)\big) \geqslant \varepsilon$, for all $n\in \mathbb N$. 

 Up to extracting, we may without loss of generality assume that $t_{n+1}-t_n \geqslant 2$ for all $n\in \mathbb N$. As $\gamma_{\lambda,x}$ is Lipschitz, there exists $1>\tau>0$ such that 
$$\forall n\in \mathbb N, \forall h\in (-\tau,\tau),\quad \alpha\big(\gamma_{\lambda,x}(-t_n+h)\big) \geqslant \frac{\varepsilon}{2}.$$
It follows that 
$$\int_{-\infty}^0 \alpha\big(\gamma_{\lambda,x} (s)\big)ds\geqslant \sum_{n=0}^{+\infty} \int_{t_n-\tau}^{t_n+\tau} \alpha\big(\gamma_{\lambda,x} (s)\big)ds \geqslant \tau\varepsilon \sum_{n=0}^{+\infty}1 = +\infty.$$
\end{proof}

As an immediate corollary, we obtain the most important result of this section:

\begin{Th}\label{representation-l}
Let $x\in M$ and $\gamma_{\lambda,x}$ be a curve given by Theorem \ref{representation}, then
\begin{equation}\label{LOl=+}
u_\lambda(x) =\int_{-\infty}^0 \exp\left(\lambda A_{\gamma_{\lambda,x}} (s) \right)\left[ L\big(\gamma_{\lambda,x}(s),\dot\gamma_{\lambda,x}(s)\big)+c_H\right] ds.
\end{equation}
\end{Th}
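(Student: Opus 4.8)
Theorem \ref{representation-l} asserts the infinite-horizon formula (\ref{LOl=+}), and my plan is to obtain it simply by passing to the limit $t\to+\infty$ in the finite-horizon equality (\ref{LOl=}) of Theorem \ref{representation}. The point is that Lemma \ref{courbe-aub} guarantees $A_{\gamma_{\lambda,x}}(-t)=\int_{0}^{-t}\alpha\circ\gamma_{\lambda,x}=-\int_{-t}^{0}\alpha\circ\gamma_{\lambda,x}\to -\infty$ as $t\to+\infty$, so the weighting factor in front of the boundary term decays to zero.

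\textbf{Plan of proof.}
First I would write down (\ref{LOl=}) for an arbitrary $t>0$,
\begin{equation*}
u_\lambda(x) = \exp\!\left(\lambda A_{\gamma_{\lambda,x}} (-t) \right) u_\lambda\big(\gamma_{\lambda,x}(-t)\big)
+\int_{-t}^0 \exp\!\left(\lambda A_{\gamma_{\lambda,x}} (s) \right)\left[ L\big(\gamma_{\lambda,x}(s),\dot\gamma_{\lambda,x}(s)\big)+c_H\right] ds.
\end{equation*}
Next I would control the boundary term: since $u_\lambda$ is bounded (Lemma \ref{boundedLip}) and $\lambda>0$, one has
$\big|\exp(\lambda A_{\gamma_{\lambda,x}}(-t))u_\lambda(\gamma_{\lambda,x}(-t))\big|\leqslant \|u_\lambda\|_\infty \exp(\lambda A_{\gamma_{\lambda,x}}(-t))$,
and by Lemma \ref{courbe-aub} the exponent $\lambda A_{\gamma_{\lambda,x}}(-t)\to -\infty$, so this term tends to $0$. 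It then remains to check that the integral converges to the improper integral over $(-\infty,0]$ written in (\ref{LOl=+}), which is immediate once we know that integral is absolutely convergent.

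\textbf{Integrability of the integrand.}
The only genuine verification is that the integrand in (\ref{LOl=+}) is integrable on $(-\infty,0]$, so the improper integral makes sense and is the limit of the truncated ones. For this I would bound the integrand in absolute value: the curves $\gamma_{\lambda,x}$ are equiLipschitz (Proposition \ref{equiLip}), so $\|\dot\gamma_{\lambda,x}\|$ is uniformly bounded and hence $\big|L(\gamma_{\lambda,x}(s),\dot\gamma_{\lambda,x}(s))+c_H\big|\leqslant C$ for a constant $C$ independent of $s$. Since $\alpha\geqslant 0$, for $s\leqslant 0$ we have $A_{\gamma_{\lambda,x}}(s)=-\int_{s}^{0}\alpha\circ\gamma_{\lambda,x}\leqslant 0$, whence $\exp(\lambda A_{\gamma_{\lambda,x}}(s))\leqslant 1$; this alone shows the integrand is bounded but not yet integrable at $-\infty$. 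To close this gap I would use the excursion estimate of Proposition \ref{excursion}: every window $[t,t+T]$ contains a time $t'$ with $\alpha\circ\gamma_{\lambda,x}(-t')>\varepsilon$, and by the equiLipschitz property of $\gamma_{\lambda,x}$ (hence of $\alpha\circ\gamma_{\lambda,x}$) this forces $\alpha\circ\gamma_{\lambda,x}\geqslant \varepsilon/2$ on a subinterval of fixed length $2\tau$, exactly as in the proof of Lemma \ref{courbe-aub}. Consequently $A_{\gamma_{\lambda,x}}(s)$ decays at least linearly: there are constants $a>0,b$ (uniform in $\lambda,x$) with $\lambda A_{\gamma_{\lambda,x}}(s)\leqslant a s+b$ for $s\leqslant 0$. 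Thus $\exp(\lambda A_{\gamma_{\lambda,x}}(s))\leqslant e^{b}e^{as}$, giving an integrable exponential bound on $(-\infty,0]$ and legitimizing the passage to the limit by dominated convergence.

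\textbf{Main obstacle.}
The substantive step is this last exponential decay estimate for $\exp(\lambda A_{\gamma_{\lambda,x}}(s))$; the boundary-term vanishing and the limit itself are routine once that decay is in hand. The decay is precisely where Proposition \ref{excursion} is needed, converting the qualitative divergence $A_{\gamma_{\lambda,x}}(-t)\to-\infty$ of Lemma \ref{courbe-aub} into the uniform linear rate that makes the integrand dominated by an integrable function. Since the statement calls this an immediate corollary, I expect the intended argument to lean directly on Lemma \ref{courbe-aub} for the boundary term and to treat the convergence of the integral as straightforward; the linear-decay refinement via Proposition \ref{excursion} is the cautious way to justify that the improper integral is well defined.
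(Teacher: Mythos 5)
Your proposal is correct and follows essentially the paper's own route: the paper obtains Theorem \ref{representation-l} as an immediate corollary of Lemma \ref{courbe-aub} by letting $t\to+\infty$ in \eqref{LOl=}, the boundary term vanishing since $u_\lambda$ is bounded and $\lambda A_{\gamma_{\lambda,x}}(-t)\to-\infty$, so that the improper integral in \eqref{LOl=+}, understood as the limit of its truncations, exists and equals $u_\lambda(x)$. Your additional dominated-convergence step is therefore not needed (it in fact reproduces the paper's later Lemma \ref{chepalle}, proved via Proposition \ref{excursion} exactly as you sketch); note only that the linear decay rate you extract satisfies $\lambda A_{\gamma_{\lambda,x}}(s)\leqslant \lambda(as+b)$ with $a,b$ uniform in $\lambda,x$, so the slope is proportional to $\lambda$ and your parenthetical claim that the constants are uniform in $\lambda$ is inaccurate --- harmless here, since $\lambda$ is fixed, but it cannot be used for limits as $\lambda\to 0$ (consistently, Lemma \ref{pallebis} shows $(C_{\lambda,x})^{-1}$ blows up like $1/\lambda$).
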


\section{The convergence result}\label{sec:proof}

We now have all the tools necessary to prove Theorem \ref{main}. The proof follows the original arguments of \cite{DFIZ}. We will start by proving the result in two easy cases before turning to the general case.

\subsection{Warm up: two baby cases}

Though we state it in our setting, the first result holds without the convexity assumption:
\begin{prop}\label{bb1}
Assume that constant functions are critical subsolutions. Then the family $(u_\lambda)_{\lambda>0}$ is nondecreasing as $\lambda \searrow 0$ in particular, it converges. 
\end{prop}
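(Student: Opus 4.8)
The plan is to use the comparison principle (Proposition \ref{comp-l}) twice: first to fix the sign of each $u_\lambda$, then to compare the solutions at two different discount rates. The whole argument hinges on one structural observation drawn from the hypothesis.

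First I would reformulate the assumption. Saying that constant functions are critical subsolutions means exactly that $H(x,0)\leqslant c_H$ for every $x\in M$, since a constant has vanishing differential. Consequently, for any $\lambda>0$ the constant function $0$ is a subsolution of \eqref{HJ-l}, because $\lambda\alpha(x)\cdot 0+H(x,0)=H(x,0)\leqslant c_H$. As $u_\lambda$ is in particular a supersolution of \eqref{HJ-l}, the comparison statement of Proposition \ref{comp-l} yields $0\leqslant u_\lambda$ on $M$, for every $\lambda>0$. This nonnegativity is the fact everything else rests on.

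Next I would establish the monotonicity. Fix $0<\lambda_1<\lambda_2$ and claim that $u_{\lambda_2}$ is a subsolution of \eqref{HJ-l} for the parameter $\lambda_1$. Indeed, let $\varphi$ be a $C^1$ function such that $u_{\lambda_2}-\varphi$ has a local maximum at a point $x$. Since $u_{\lambda_2}$ solves \eqref{HJ-l} for $\lambda_2$, the subsolution test gives $\lambda_2\alpha(x)u_{\lambda_2}(x)+H(x,D_x\varphi)\leqslant c_H$. Using $\alpha\geqslant 0$, $u_{\lambda_2}\geqslant 0$ and $\lambda_1<\lambda_2$ we get $\lambda_1\alpha(x)u_{\lambda_2}(x)\leqslant \lambda_2\alpha(x)u_{\lambda_2}(x)$, hence $\lambda_1\alpha(x)u_{\lambda_2}(x)+H(x,D_x\varphi)\leqslant c_H$. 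Thus $u_{\lambda_2}$ is a subsolution of the $\lambda_1$-equation, and comparing it with the solution $u_{\lambda_1}$ (again Proposition \ref{comp-l}) yields $u_{\lambda_2}\leqslant u_{\lambda_1}$. Therefore $\lambda\mapsto u_\lambda$ is nondecreasing as $\lambda\searrow 0$.

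Finally I would deduce convergence. By Lemma \ref{boundedLip} the family $(u_\lambda)_{\lambda\in(0,1]}$ is equibounded, so the monotone limit $u_0:=\sup_{\lambda}u_\lambda=\lim_{\lambda\to 0}u_\lambda$ exists pointwise and is finite. The same lemma gives that the family is equiLipschitz, so $u_0$ is Lipschitz, hence continuous, and equiLipschitz plus pointwise convergence on the compact manifold $M$ forces the convergence to be uniform (by Arzel\`a--Ascoli, every subsequential uniform limit must coincide with the pointwise limit $u_0$; equivalently one may invoke Dini's theorem for the monotone convergence to a continuous limit). This establishes the claimed convergence. The only genuinely delicate point is the very first observation: one must notice that the hypothesis is precisely what forces $u_\lambda\geqslant 0$, and that this sign is exactly what makes the comparison at two discount rates run in the correct direction; the remaining steps are routine applications of comparison and compactness.
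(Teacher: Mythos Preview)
Your proof is correct and follows essentially the same approach as the paper: establish $u_\lambda\geqslant 0$ by comparing with the constant $0$, then use this sign together with $\alpha\geqslant 0$ to show $u_{\lambda_2}$ is a subsolution of the $\lambda_1$-equation when $\lambda_1<\lambda_2$, and conclude by comparison and Lemma~\ref{boundedLip}. You spell out the viscosity test and the passage from pointwise to uniform convergence in slightly more detail than the paper does, but the argument is the same.
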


\begin{proof}
As constants are subsolutions it follows that if $\lambda >0$, 
$$\forall x\in M, \quad \lambda \alpha(x)\times 0 +H(x, D_x 0) \leqslant c_H.$$
It follows from the comparison principle (Proposition \ref{comp-l}) that $u_\lambda \geqslant 0$.
If now $0<\lambda< \lambda'$, we have in the viscosity sense
$$ \lambda \alpha(x) u_{\lambda'}(x) +H(x, D_x u_{\lambda'}) \leqslant \lambda' \alpha(x) u_{\lambda'}(x) +H(x, D_x u_{\lambda'})  = c_H.$$
Hence $u_{\lambda'}$ is a subsolution of \eqref{HJ-l} and again by the comparison principle, $u_{\lambda'}\leqslant u_\lambda$. The rest is an immediate consequence of Lemma \ref{boundedLip}.
\end{proof}

The second case reduces to \cite{DFIZ} and can be seen as a motivation for the rest of our work:

\begin{prop}\label{bb2}
Assume that $\alpha(x)>0$ on $M$, then the family $(u_\lambda)_{\lambda>0}$ converges as $\lambda\to 0$. 
\end{prop}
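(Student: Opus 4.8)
The plan is to reduce the statement to the main convergence theorem of \cite{DFIZ} by dividing the equation by the everywhere positive function $\alpha$. Since $\alpha$ is continuous and positive on the compact manifold $M$, there is a constant $m>0$ with $m\leqslant \alpha(x)\leqslant \|\alpha\|_\infty$ for all $x\in M$. I would then introduce the Hamiltonian
$$\widehat H(x,p) = \frac{H(x,p)-c_H}{\alpha(x)}, \qquad (x,p)\in T^*M.$$
This $\widehat H$ is continuous, convex in $p$ (multiplication by the positive scalar $1/\alpha(x)$ preserves convexity), and superlinear: for $\|p\|_x$ large one has $H(x,p)-c_H>0$, hence $\widehat H(x,p)\geqslant (H(x,p)-c_H)/\|\alpha\|_\infty$, whose ratio to $\|p\|_x$ tends to $+\infty$ by (H2'). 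Thus $\widehat H$ again satisfies (H1) and (H2').

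The first key observation is that, because $\alpha(x)>0$, dividing \eqref{HJ-l} by $\alpha(x)$ leaves the notions of viscosity subsolution, supersolution and solution unchanged: for any $C^1$ test function $\varphi$ and any point $x_0$, the real number $\lambda\alpha(x_0)r + H(x_0,D_{x_0}\varphi)-c_H$ has the same sign as $\lambda r + \widehat H(x_0,D_{x_0}\varphi)$. Consequently the function $u_\lambda$ of Proposition \ref{comp-l} is exactly the unique viscosity solution of the non--degenerate, constant--rate discounted equation $\lambda u + \widehat H(x,D_x u)=0$ on $M$. The second point I would verify is that the critical value of $\widehat H$ equals $0$: a function $u$ is a subsolution of $\widehat H(x,D_x u)=c$ if and only if $H(x,D_x u)\leqslant c_H + c\,\alpha(x)$ almost everywhere; for $c=0$ this is the critical inequality, which admits solutions, whereas for $c<0$ it would force $H(x,D_x u)\leqslant c_H + c\,m < c_H$ almost everywhere, so that $c_H+cm$ would admit subsolutions, contradicting the minimality of $c_H$. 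Hence $c_{\widehat H}=0$.

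With these two observations, $(u_\lambda)_{\lambda>0}$ is precisely the family of solutions to the standard discounted equation associated with the convex superlinear Hamiltonian $\widehat H$ normalized at its critical value $0$. The convergence theorem of \cite{DFIZ} then applies directly and yields that $u_\lambda$ converges uniformly, as $\lambda\to 0$, to a viscosity solution $u_0$ of $\widehat H(x,D_x u_0)=0$, equivalently of $H(x,D_x u_0)=c_H$. I expect the only step requiring genuine care to be the identification $c_{\widehat H}=0$ together with the viscosity--equivalence of the two formulations; once these are secured the conclusion follows by direct citation, which is why this case is appropriately labelled a warm--up.
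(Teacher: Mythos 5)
Your proposal is correct and is essentially the paper's own proof: the paper defines $H_\alpha(x,p)=\alpha^{-1}(x)\big(H(x,p)-c_H\big)$, notes that $u_\lambda$ solves $\lambda u_\lambda + H_\alpha(x,D_xu_\lambda)=0$, that $H_\alpha$ keeps the required convexity/coercivity properties with critical constant $0$, and concludes by citing \cite[Theorem 1.1]{DFIZ}, exactly as you do. The only difference is that you spell out details the paper leaves implicit (the sign-preservation argument for the viscosity equivalence, and the identification $c_{H_\alpha}=0$ via subsolutions and minimality of $c_H$ rather than via uniqueness of the constant admitting solutions), and these details are accurate.
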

\begin{proof}
If $\lambda>0$ then $u_\lambda$ is also a viscosity solution to 
$$\lambda u_\lambda(x) +H_{\alpha} ( x, D_x u_\lambda) = 0 , \qquad x\in M$$
where $H_\alpha (x,p) = \alpha^{-1}(x) \big(H(x,p) - c_H\big)$ still verifies hypotheses (H1) and (H2). Moreover, any critical solution of $H(x, D_x u)=c_H$ is a solution of $H_\alpha(x,D_x u)=0$ hence the critical constant of $H_\alpha$ is $0$. The result is now a direct application to $H_\alpha$ of \cite[Theorem 1.1]{DFIZ}.
\end{proof}

\begin{oss}\rm
This second case is implied by our hypothesis ($\alpha2$) when the projected Aubry set verifies $\A=M$. This happens for instance in the presence of an exact KAM torus. If $\A=M$ then there is a unique weak KAM solution up to a constant and the selection problem reduces to selecting a particular constant. However, in a forthcoming work (\cite{CFZZ}), we will weaken this hypothesis ($\alpha2)$. For example, in the case of existence of an exact KAM torus where the Hamiltonian flow is conjugated to an irrational rotation, the conclusions of the selection Theorem hold true for any nonnegative, non identically $0$ function $\alpha$. To be more precise, in \cite{CFZZ} we will tackle degenerate discounted problems involving Hamiltonians depending nonlinearly in the value of the function. In contrast with the present work, the proofs are different as they do not rely on explicit representation formulas of the $u_\lambda$.
\end{oss}

\subsection{The general case}

 As the family $(u_\lambda)_{\lambda \in (0,1]}$ is relatively compact, we only have to prove that there is only one accumulation point as $\lambda \to 0$. We start by establishing a constraint on such accumulation points:

\begin{prop}\label{constraints}
Let $(\lambda_n)_n\in \mathbb N$ be a decreasing sequence converging to $0$ such that $(u_{\lambda_n})_n$ uniformly converges to a function $v_0$. Then 
\begin{equation}\label{inegconstraint}
\forall \mu\in \PP_{\mathrm{min}}, \quad \int_{TM} \alpha(x)v_0(x) d\mu(x,v) \leqslant 0.
\end{equation}
\end{prop}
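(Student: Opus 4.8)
The plan is to test the equation satisfied by $u_{\lambda_n}$ against a Mather measure $\mu \in \PP_{\mathrm{min}}$, exploiting both that $\mu$ is closed (so gradient terms integrate to zero) and that $\mu$ is minimizing (so $\int L\, d\mu = -c_H$), and then to pass to the limit $n\to\infty$. Fix $\mu \in \PP_{\mathrm{min}}$. The function $u_{\lambda_n}$ is a subsolution of \eqref{HJ-l}, so by Proposition \ref{approx} there is, for each $\varepsilon>0$, a $C^1$ function $w_\varepsilon$ with $\|u_{\lambda_n}-w_\varepsilon\|_\infty<\varepsilon$ and $\lambda_n\alpha(x)w_\varepsilon(x)+H(x,D_xw_\varepsilon)\leqslant c_H+\varepsilon$ for all $x$ (here one must be slightly careful, since the discounting term depends on the value; but because $w_\varepsilon$ is $\varepsilon$-close to $u_{\lambda_n}$ and $\alpha$ is bounded, the discount contributes an error controlled by $\lambda_n\|\alpha\|_\infty\varepsilon$, which I absorb into the error budget). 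First I would integrate the Fenchel inequality $D_xw_\varepsilon(v)\leqslant H(x,D_xw_\varepsilon)+L(x,v)$ against $\mu$, exactly as in the proof of Proposition \ref{inclusion}.

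Carrying out that integration, the left-hand side $\int_{TM}D_xw_\varepsilon(v)\,d\mu$ vanishes because $\mu$ is closed. On the right I substitute the subsolution bound $H(x,D_xw_\varepsilon)\leqslant c_H-\lambda_n\alpha(x)w_\varepsilon(x)+\varepsilon(1+\lambda_n\|\alpha\|_\infty)$ and use $\int_{TM}L\,d\mu=-c_H$. This yields
\begin{equation*}
0 \leqslant -c_H - \lambda_n\int_{TM}\alpha(x)w_\varepsilon(x)\,d\mu(x,v) + c_H + \varepsilon\big(1+\lambda_n\|\alpha\|_\infty\big),
\end{equation*}
so that
\begin{equation*}
\lambda_n\int_{TM}\alpha(x)w_\varepsilon(x)\,d\mu(x,v) \leqslant \varepsilon\big(1+\lambda_n\|\alpha\|_\infty\big).
\end{equation*}
Letting $\varepsilon\to 0$ (and replacing $w_\varepsilon$ by $u_{\lambda_n}$, the approximation error again being $O(\varepsilon)$) gives the clean inequality $\lambda_n\int_{TM}\alpha(x)u_{\lambda_n}(x)\,d\mu(x,v)\leqslant 0$. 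Since $\lambda_n>0$, I may divide it out to obtain
\begin{equation*}
\int_{TM}\alpha(x)u_{\lambda_n}(x)\,d\mu(x,v) \leqslant 0 \quad\text{for every }n.
\end{equation*}

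Finally I pass to the limit. The integrand $\alpha(x)u_{\lambda_n}(x)$ converges uniformly to $\alpha(x)v_0(x)$, because $u_{\lambda_n}\to v_0$ uniformly (by hypothesis) and $\alpha$ is a fixed bounded continuous function; since $\mu$ is a probability measure, $\int_{TM}\alpha(x)u_{\lambda_n}(x)\,d\mu\to\int_{TM}\alpha(x)v_0(x)\,d\mu$, and the non-positivity is preserved in the limit, giving \eqref{inegconstraint}. The step I expect to require the most care is the very first one: making rigorous the smoothing of the subsolution while keeping track that the discount term $\lambda_n\alpha(x)u_{\lambda_n}(x)$ does not spoil the inequality when $u_{\lambda_n}$ is replaced by its $C^1$ approximant. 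Because $\alpha$ is merely continuous and nonnegative and the $u_{\lambda_n}$ are only Lipschitz, I cannot integrate the PDE pointwise against $\mu$ directly; the mollification of Proposition \ref{approx}, together with the closedness of $\mu$, is precisely what converts the viscosity subsolution inequality into an honest integral inequality, and this is where the bulk of the argument lies.
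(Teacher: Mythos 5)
Your proposal is correct and follows essentially the same argument as the paper: smooth the subsolution via Proposition \ref{approx}, integrate the Fenchel inequality against the Mather measure using closedness and $\int_{TM} L\,d\mu=-c_H$, let $\varepsilon\to 0$, divide by $\lambda_n>0$, and pass to the limit by uniform convergence. The only (harmless) difference is bookkeeping: the paper sidesteps your ``value-dependence'' worry by applying Proposition \ref{approx} to the frozen Hamiltonian $G(x,p)=H(x,p)+\lambda\alpha(x)u_\lambda(x)-c_H$, so the discount term keeps $u_\lambda$ itself and no extra $O(\lambda_n\|\alpha\|_\infty\varepsilon)$ error needs to be tracked.
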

 
 \begin{proof}
Let $\mu$ be a Mather measure. Let $\varepsilon >0$, by applying Proposition \ref{approx} to the Hamiltonian $H(x,p)+\lambda\alpha(x) u_\lambda(x)- c_H$ we find a $C^1$ function $v_\varepsilon : M\to \R$ such that 
$$\forall x\in M,\quad \lambda\alpha(x) u_\lambda(x) + H(x, D_x v_\varepsilon )-c_H \leqslant  \varepsilon.$$
We then integrate this inequality  and apply the Fenchel inequality and the properties of $\mu \in\PP_{\mathrm{min}}$ to infer that 
\begin{align*}
\int_{TM} \lambda\alpha(x) u_\lambda(x)d\mu(x,v) & = \int_{TM}\Big[ \lambda\alpha(x) u_\lambda(x) +D_x v_\varepsilon (v) -L(x,v)-c_H\Big]d\mu(x,v) \\
&\leqslant  \int_{TM}\Big[ \lambda\alpha(x) u_\lambda(x) +H(x, D_x v_\varepsilon)-c_H\Big]d\mu(x,v) \\
&\leqslant  \int_{TM}\varepsilon d\mu(x,v) = \varepsilon.
\end{align*}
As this holds for all $\varepsilon >0$ we conclude, after dividing by $\lambda >0$, that 
$$ \int_{TM} \alpha(x)u_\lambda(x) d\mu(x,v) \leqslant 0.$$
Passing to the limit along the sequence $\lambda_n$, recalling that $u_{\lambda_n} \to u$ uniformly, we conclude that $ \int_{TM} \alpha(x)v_0(x) d\mu(x,v) \leqslant 0$ as was to be shown.

 \end{proof}

\begin{df}\rm\label{limit}
Let us set $\S_0$ be the set of critical subsolutions $u$ to \eqref{HJ-crit} verifying the constraint
\begin{equation}\label{constraint}
\forall \mu\in \PP_{\mathrm{min}}, \quad \int_{TM} \alpha(x)u(x) d\mu(x,v) \leqslant 0.
\end{equation}

We then define $u_0(x) = \sup\limits_{u\in \S_0} u(x)$. 
\end{df}
By stability of the notion of viscosity solutions any accumulation point $v_0$ of $(u_\lambda)_\lambda$ is a critical solution, hence by Proposition \ref{constraints}, $v_0 \in \S_0$. 

The function $u_0$ is well defined as functions in $\S_0$ are equiLipschitz and must take a nonpositive value. By Proposition \ref{viscositygen} $u_0$ is a critical subsolution. Moreover, if $v_0$ is as above, $v_0\leqslant u_0$. To prove the convergence result, we will establish the reverse inequality.

We will need the following
\begin{lemma}\label{chepalle}
Let $\lambda\in (0,1]$ and $x\in M$. Let $\gamma_{\lambda,x} : (-\infty , 0] \to M$ be given by Theorem \ref{representation}, then
$\int_{-\infty}^0 \exp\left(\lambda A_{\gamma_{\lambda,x}} (s) \right)ds <+\infty$.
\end{lemma}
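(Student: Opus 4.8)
The goal is to show that the exponential weight $\exp(\lambda A_{\gamma_{\lambda,x}}(s))$ is integrable over $(-\infty,0]$. Since $\alpha \geq 0$, the function $s \mapsto A_{\gamma_{\lambda,x}}(s) = \int_0^s \alpha\circ\gamma_{\lambda,x}$ is nonincreasing as $s$ decreases (for $s<0$ it equals $-\int_s^0 \alpha\circ\gamma_{\lambda,x} \leq 0$), so the weight is bounded by $1$, but that alone does not give integrability over an infinite interval. What I need is that $A_{\gamma_{\lambda,x}}(s)$ tends to $-\infty$ fast enough as $s\to -\infty$.

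Let me look at Proposition \ref{excursion}: there exist $T>0$ and $\varepsilon>0$ (uniform in $\lambda, x$) such that on every interval $[t,t+T]$ the function $\alpha\circ\gamma_{\lambda,x}(-\cdot)$ exceeds $\varepsilon$ somewhere. Combined with the equi-Lipschitz property of the curves (Proposition \ref{equiLip}), near each such point $\alpha$ stays above $\varepsilon/2$ on an interval of fixed length $2\tau$ (as in Lemma \ref{courbe-aub}). This gives a lower bound on the accumulated integral: over each window of length $T$, $A$ decreases by at least a fixed positive amount $\delta = \tau\varepsilon$ (taking $\tau<T/2$ so the windows don't overlap trouble). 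Hence roughly $A_{\gamma_{\lambda,x}}(-kT) \leq -k\delta$, giving geometric decay of the weight. This is exactly the quantitative statement Proposition \ref{excursion} was designed to provide.

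So my plan is as follows.

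\begin{proof}
Let $T>0$ and $\varepsilon>0$ be the constants provided by Proposition \ref{excursion}, and let $K>0$ be a common Lipschitz constant for all the curves $\gamma_{\lambda,x}$, given by Proposition \ref{equiLip}. Since $\alpha$ is (uniformly) continuous on the compact manifold $M$, there exists $\tau\in(0,T)$ such that $|\alpha(y)-\alpha(y')|<\varepsilon/2$ whenever $d(y,y')\leqslant K\tau$. Fix $\lambda\in(0,1]$ and $x\in M$, and write $\gamma=\gamma_{\lambda,x}$ and $A=A_{\gamma_{\lambda,x}}$.

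For each integer $k\geqslant 0$, apply Proposition \ref{excursion} with $t=kT$ to obtain $t_k'\in[kT,(k+1)T]$ with $\alpha\circ\gamma(-t_k')>\varepsilon$. Since $\gamma$ is $K$--Lipschitz, for every $h\in[-\tau,\tau]$ we have $d\big(\gamma(-t_k'),\gamma(-t_k'+h)\big)\leqslant K\tau$, hence $\alpha\circ\gamma(-t_k'+h)>\varepsilon/2$ by the choice of $\tau$. As $\alpha\geqslant 0$ everywhere and $t_k'\in[kT,(k+1)T]$, the intervals $[-t_k'-\tau,-t_k'+\tau]$ are pairwise disjoint once we assume (up to replacing $T$ by $2T$) that $T>2\tau$, and all lie in $[-(k+1)T-\tau,0]$. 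Therefore, for any $s\leqslant -(k+1)T-\tau$,
\begin{equation*}
A(s)=-\int_s^0 \alpha\circ\gamma(\sigma)\,d\sigma\leqslant -\sum_{j=0}^{k}\int_{-t_j'-\tau}^{-t_j'+\tau}\alpha\circ\gamma(\sigma)\,d\sigma\leqslant -(k+1)\,\tau\varepsilon.
\end{equation*}

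Set $\delta=\tau\varepsilon>0$, which depends only on $T$, $\varepsilon$, $\tau$ and not on $\lambda$ or $x$. Splitting the half--line into the successive slabs $s\in\big(-(k+2)T-\tau,\,-(k+1)T-\tau\big]$, each of length $T$, and using that $\lambda\leqslant 1$ together with the monotonicity of the exponential, we bound
\begin{equation*}
\int_{-\infty}^{0}\exp\big(\lambda A(s)\big)\,ds\leqslant (T+\tau)+\sum_{k=0}^{\infty} T\,\exp\big(-\lambda(k+1)\delta\big)=(T+\tau)+\frac{T\,\mathrm{e}^{-\lambda\delta}}{1-\mathrm{e}^{-\lambda\delta}}<+\infty.
\end{equation*}
This proves the claimed integrability.
\end{proof}

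The main obstacle is the passage from the qualitative recurrence statement of Proposition \ref{excursion} to a quantitative geometric decay of $A$: one must convert the ``$\alpha$ exceeds $\varepsilon$ at some time in each window'' into an integral lower bound, which is precisely where the equi--Lipschitz bound and uniform continuity of $\alpha$ enter to spread the pointwise value over an interval of fixed length.
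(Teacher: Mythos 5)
Your strategy is exactly the paper's: invoke Proposition \ref{excursion} to get recurrence times at which $\alpha\circ\gamma_{\lambda,x}>\varepsilon$, spread that pointwise bound over intervals of fixed length $2\tau$ using the equi--Lipschitz property of the curves, deduce that $A_{\gamma_{\lambda,x}}$ decreases linearly, and sum a geometric series; the resulting bound is even uniform in $x$ and $\lambda$, which is what the paper later exploits in Lemma \ref{pallebis}.

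However, one step fails as written: the disjointness claim. You pick $t_k'\in[kT,(k+1)T]$ by applying Proposition \ref{excursion} at $t=kT$, and assert that the intervals $[-t_k'-\tau,-t_k'+\tau]$ are pairwise disjoint ``once $T>2\tau$''. This does not follow from any relation between $T$ and $\tau$: the windows $[kT,(k+1)T]$ and $[(k+1)T,(k+2)T]$ share the endpoint $(k+1)T$, so $t_k'$ and $t_{k+1}'$ may coincide or be arbitrarily close, and then $\sum_{j=0}^{k}\int_{I_j}\alpha\circ\gamma$ overcounts, so the key estimate $A(s)\leqslant-(k+1)\tau\varepsilon$ is not justified. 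The repair is immediate and only costs a constant: apply the proposition at times $t=2kT$ only, so that $t_k'\in[2kT,2kT+T]$ and hence $t_{k+1}'-t_k'\geqslant 2(k+1)T-(2kT+T)=T>2\tau$, which restores disjointness at the price of halving the density of good intervals (equivalently, observe that the even--indexed times in your construction are automatically $T$--separated). The paper avoids the issue differently: it builds the recurrence times \emph{inductively}, applying Proposition \ref{excursion} at $t=t_n+2$ to get $t_{n+1}\in[t_n+2,t_n+2+T]$, which enforces $2\leqslant t_{n+1}-t_n\leqslant T+2$ and, together with $\tau<1$, makes the intervals disjoint by construction. With either correction your geometric--series bound goes through and the proof is complete.
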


\begin{proof}
We use Proposition \ref{excursion} and consider  $T>0$ and $\varepsilon>0$ such that 
$$\forall t>0, \exists t'\in [t,t+T] , \quad \alpha \circ   \gamma_{\lambda,x}(-t') >\varepsilon.$$
By induction we construct an increasing sequence $(t_n)_n$ such that $t_0\in [1,T+1]$,  $2\leqslant t_{n+1}-t_n \leqslant T+2$ for all $n\in \N$ and $\alpha \circ   \gamma_{\lambda,x}(-t') >\varepsilon$. As in Lemma \ref{courbe-aub}
using that $\gamma_{\lambda,x}$ is Lipschitz, there exists $1>\tau>0$ such that 
$$\forall n\in \mathbb N, \forall h\in (-\tau,\tau),\quad \alpha\big(\gamma_{\lambda,x}(-t_n+h)\big) \geqslant \frac{\varepsilon}{2}.$$
Hence if $n\geqslant 0$,
$$A_{\gamma_{\lambda,x}} (-t_n-\tau) \leqslant -\sum_{k=0}^n \int_{-t_k-\tau}^{-t_k+\tau}\alpha\big(\gamma_{\lambda,x}(h)\big)dh \leqslant -(n+1)\tau \varepsilon,
$$
and using the monotonicity of $A_{\gamma_{\lambda,x}}$:
\begin{align}
\int_{-t_n-\tau}^0 \exp\left(\lambda A_{\gamma_{\lambda,x}} (s) \right)ds& = \int_{-t_0-\tau}^0  \exp\left(\lambda A_{\gamma_{\lambda,x}} (s) \right)ds+ \sum_{k=0}^{n-1} \int_{-t_{k+1}-\tau}^{-t_k-\tau}  \exp\left(\lambda A_{\gamma_{\lambda,x}} (s) \right)ds \nonumber\\
&\leqslant  \int_{-t_0-\tau}^0  \exp\left(\lambda A_{\gamma_{\lambda,x}} (s) \right)ds+ \sum_{k=0}^{n-1}(t_{k+1}-t_k) \exp\left(\lambda A_{\gamma_{\lambda,x}} (-t_k-\tau) \right) \nonumber \\
&\leqslant  \int_{-t_0-\tau}^0  \exp\left(\lambda A_{\gamma_{\lambda,x}} (s) \right)ds+ \sum_{k=0}^{n-1}(T+2) \exp\big(-\lambda (k+1)\tau\varepsilon\big) \nonumber\\
&\leqslant T+2+\frac{(T+2)\exp\big(-\lambda \tau\varepsilon\big)}{1-\exp\big(-\lambda \tau\varepsilon\big)} .\label{eqpalle}
\end{align}
\end{proof}

For $x\in M$, $\lambda\in (0,1]$ we fix $\gamma_{\lambda,x} : (-\infty , 0] \to M$ be given by Theorem \ref{representation} and define the probability measure $\mu_x^\lambda$ on $TM$ defined by
$$\forall f\in C^0(M,\R),\quad \int_{TM} f d\mu_x^\lambda = C_{\lambda,x}\int_{-\infty}^0 \exp\left(\lambda A_{\gamma_{\lambda,x}} (s) \right)f\big(\gamma_{\lambda,x}(s),\dot\gamma_{\lambda,x}(s)\big) ds ,$$
where $(C_{\lambda,x})^{-1} = \int_{-\infty}^0 \exp\left(\lambda A_{\gamma_{\lambda,x}} (s) \right)ds$.

As a corollary of the proof of Lemma \ref{chepalle} we deduce
\begin{lemma}\label{pallebis}
The function $(\lambda,x)\mapsto \lambda (C_{\lambda,x})^{-1}$ is uniformly bounded as $\lambda \to 0$.
\end{lemma}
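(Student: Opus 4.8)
The plan is to observe that Lemma \ref{pallebis} is an immediate consequence of the estimate \eqref{eqpalle} obtained in the course of proving Lemma \ref{chepalle}, once one keeps track of the dependence on $\lambda$. First I would let $n\to+\infty$ in \eqref{eqpalle}. Since $t_n\to+\infty$ and the integrand $\exp(\lambda A_{\gamma_{\lambda,x}}(s))$ is nonnegative, monotone convergence gives that the left-hand side tends to $\int_{-\infty}^0 \exp(\lambda A_{\gamma_{\lambda,x}}(s))\,ds = (C_{\lambda,x})^{-1}$, so that
$$(C_{\lambda,x})^{-1} \leqslant T+2 + \frac{(T+2)\exp(-\lambda\tau\varepsilon)}{1-\exp(-\lambda\tau\varepsilon)}.$$
The essential point is that the constants $T$ and $\varepsilon$ come from Proposition \ref{excursion}, while $\tau$ is produced, as in the proof of Lemma \ref{chepalle}, from the equiLipschitz bound of Proposition \ref{equiLip} together with the uniform continuity of $\alpha$ on the compact manifold $M$; hence all three may be chosen independently of $\lambda\in(0,1]$ and $x\in M$.

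Next I would multiply by $\lambda$ and bound the two resulting terms uniformly. The first contributes $\lambda(T+2)\leqslant T+2$. For the second, writing $a=\tau\varepsilon>0$ and rewriting
$$\frac{\lambda\exp(-\lambda\tau\varepsilon)}{1-\exp(-\lambda\tau\varepsilon)} = \frac{\lambda}{e^{\lambda\tau\varepsilon}-1} = \frac{1}{\tau\varepsilon}\cdot\frac{\lambda\tau\varepsilon}{e^{\lambda\tau\varepsilon}-1},$$
the elementary inequality $1+y\leqslant e^{y}$ applied with $y=\lambda\tau\varepsilon>0$ yields $\frac{\lambda\tau\varepsilon}{e^{\lambda\tau\varepsilon}-1}\leqslant 1$, so this term is bounded by $\frac{1}{\tau\varepsilon}$. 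Combining the two estimates gives
$$\lambda\,(C_{\lambda,x})^{-1} \leqslant (T+2)\Big(1+\frac{1}{\tau\varepsilon}\Big)$$
for all $\lambda\in(0,1]$ and $x\in M$, which is exactly the asserted uniform bound.

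There is really no obstacle here: the whole geometric content was already produced in Lemma \ref{chepalle}, and the only new observation is that the factor $\lambda$ precisely cancels the $\lambda^{-1}$-type blow-up of $\frac{1}{1-\exp(-\lambda\tau\varepsilon)}\sim\frac{1}{\lambda\tau\varepsilon}$ as $\lambda\to 0$. The only mild care required is to verify that $T,\varepsilon,\tau$ can be taken independent of the pair $(\lambda,x)$, which is guaranteed by Propositions \ref{excursion} and \ref{equiLip}.
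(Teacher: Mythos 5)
Your proof is correct and follows essentially the same route as the paper: both rest on the estimate \eqref{eqpalle}, with the constants $T$, $\varepsilon$, $\tau$ uniform in $(\lambda,x)\in(0,1]\times M$, multiplied through by $\lambda$. The only difference is cosmetic: where the paper simply observes that the resulting right-hand side stays bounded and converges to $(T+2)(\tau\varepsilon)^{-1}$ as $\lambda\to 0$, you make the boundedness quantitative via $1+y\leqslant e^{y}$, obtaining the explicit bound $(T+2)\big(1+(\tau\varepsilon)^{-1}\big)$.
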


\begin{proof}
We have seen, \eqref{eqpalle}, that there are constants $T$, $\varepsilon$, and $\tau $, independent  of $x\in M$ and $\lambda\in (0,1]$ such that 
$$ \lambda (C_{\lambda,x})^{-1} \leqslant \lambda \left( T+2+\frac{(T+2)\exp\big(-\lambda \tau\varepsilon\big)}{1-\exp\big(-\lambda \tau\varepsilon\big)}  \right).$$
The right-hand side is bounded as $\lambda \to 0$ and converges to $(T+2)(\tau\varepsilon)^{-1}$.
\end{proof}

\begin{prop}\label{mesure-lim}
The measures of the family $(\mu_x^\lambda)_{\lambda\in (0,1]}$ have support included in a common compact subset of $TM$. Hence it is a relatively compact family in $\PP(TM)$.  Moreover if $\lambda_n \to 0$ is such that $(\mu_x^{\lambda_n})_{n\in \mathbb N}$ converges to $\mu$ then $\mu \in\PP_{\mathrm{min}}$ is a Mather measure.
\end{prop}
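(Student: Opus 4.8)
The plan is to prove the two assertions separately, the first being soft and the second carrying the bulk of the work. For relative compactness I would invoke Proposition \ref{equiLip}: the curves $\gamma_{\lambda,x}$ are equiLipschitz with a constant $\Lambda>0$ independent of $\lambda\in(0,1]$ and $x\in M$, so that $\|\dot\gamma_{\lambda,x}(s)\|_{\gamma_{\lambda,x}(s)}\leqslant \Lambda$ for almost every $s\leqslant 0$. Since $\mu_x^\lambda$ is by construction carried by $\{(\gamma_{\lambda,x}(s),\dot\gamma_{\lambda,x}(s)):s\leqslant 0\}$, its support lies in the compact set $\mathcal K=\{(y,v)\in TM:\|v\|_y\leqslant \Lambda\}$, which depends neither on $\lambda$ nor on $x$. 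Relative compactness then follows from the weak-$*$ compactness of probability measures supported in a fixed compact metric space.

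Now fix a sequence $\lambda_n\to 0$ with $\mu_x^{\lambda_n}\weakcv\mu$. I would first record the quantitative fact that $C_{\lambda,x}\to 0$ as $\lambda\to 0$: since $\alpha\geqslant 0$ one has $A_{\gamma_{\lambda,x}}(s)\leqslant 0$ for $s\leqslant 0$, so $s\mapsto\exp(\lambda A_{\gamma_{\lambda,x}}(s))$ increases to $1$ as $\lambda\searrow 0$, and monotone convergence gives $(C_{\lambda,x})^{-1}=\int_{-\infty}^0\exp(\lambda A_{\gamma_{\lambda,x}}(s))\,ds\to+\infty$, whence $C_{\lambda,x}\to 0$. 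It then remains to check that $\mu$ is closed and minimizing.

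For closedness I would take $f\in C^1(M,\R)$ and integrate by parts in $s$, using $\tfrac{d}{ds}f(\gamma_{\lambda,x}(s))=D_{\gamma_{\lambda,x}(s)}f(\dot\gamma_{\lambda,x}(s))$ and $\tfrac{d}{ds}\exp(\lambda A_{\gamma_{\lambda,x}}(s))=\lambda\,\alpha(\gamma_{\lambda,x}(s))\exp(\lambda A_{\gamma_{\lambda,x}}(s))$, exactly as in Proposition \ref{excursion}. The boundary term at $0$ equals $C_{\lambda,x}f(x)$ and the one at $-\infty$ vanishes because $\exp(\lambda A_{\gamma_{\lambda,x}}(s))\to 0$ there (Lemma \ref{courbe-aub}) while $f$ is bounded; the remaining contribution is controlled by $\lambda C_{\lambda,x}\|f\|_\infty\int_{-\infty}^0\alpha(\gamma_{\lambda,x}(s))\exp(\lambda A_{\gamma_{\lambda,x}}(s))\,ds=C_{\lambda,x}\|f\|_\infty$, using the exact identity $\int_{-\infty}^0\alpha(\gamma_{\lambda,x}(s))\exp(\lambda A_{\gamma_{\lambda,x}}(s))\,ds=\lambda^{-1}$. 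As $C_{\lambda,x}\to 0$, both pieces vanish, so $\int_{TM}D_xf(v)\,d\mu_x^{\lambda_n}\to 0$; since $D_xf(v)$ agrees on $\mathcal K$ with a compactly supported continuous function, passing to the weak limit yields $\int_{TM}D_xf(v)\,d\mu=0$, and as $\mu$ is compactly supported it has finite first moment, so $\mu$ is closed.

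For the minimizing property I would use Theorem \ref{representation-l}: dividing that formula by $(C_{\lambda,x})^{-1}$ and recalling that $\mu_x^\lambda$ is a probability measure gives $\int_{TM}L\,d\mu_x^\lambda+c_H=C_{\lambda,x}u_\lambda(x)$. By Lemma \ref{boundedLip} the $u_\lambda$ are equibounded, so the right-hand side tends to $0$; passing to the limit along $\lambda_n$ (legitimate since $L$ is continuous and all measures are carried by the fixed compact $\mathcal K$) gives $\int_{TM}L\,d\mu=-c_H$. Together with closedness and Theorem \ref{minimizing}, this shows $\mu\in\PP_{\mathrm{min}}$. I expect the closedness step to be the main obstacle: one must check that both the boundary term and the $\alpha$-weighted integral in the integration by parts vanish in the limit, which hinges precisely on the decay $C_{\lambda,x}\to 0$ together with the identity $\int_{-\infty}^0\alpha\circ\gamma_{\lambda,x}\,\exp(\lambda A_{\gamma_{\lambda,x}})\,ds=\lambda^{-1}$.
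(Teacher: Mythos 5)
Your argument is, in substance, the paper's own: relative compactness from Proposition \ref{equiLip}, closedness via the same integration by parts, and the minimizing property by normalizing the representation formula of Theorem \ref{representation-l}. Two of your refinements are legitimate and even slightly sharper than the paper's: you identify the boundary term at $-\infty$ as genuinely vanishing (via Lemma \ref{courbe-aub}), and you use the exact identity $\int_{-\infty}^0\lambda\,\alpha\big(\gamma_{\lambda,x}(s)\big)\exp\big(\lambda A_{\gamma_{\lambda,x}}(s)\big)\,ds=1$ (again justified by Lemma \ref{courbe-aub}), where the paper settles for the cruder bound $\lambda\|\alpha\|_\infty (C_{\lambda,x})^{-1}$.

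The one step that fails as written is your justification that $C_{\lambda,x}\to 0$. You claim that $s\mapsto\exp\big(\lambda A_{\gamma_{\lambda,x}}(s)\big)$ \emph{increases} to $1$ as $\lambda\searrow 0$ and invoke monotone convergence. But the curve $\gamma_{\lambda,x}$, hence the function $A_{\gamma_{\lambda,x}}$, changes with $\lambda$: monotonicity would require $\lambda'\int_s^0\alpha\circ\gamma_{\lambda',x}\leqslant\lambda\int_s^0\alpha\circ\gamma_{\lambda,x}$ whenever $\lambda'<\lambda$, and nothing prevents the minimizing curve for $\lambda'$ from accumulating a much larger $\alpha$-integral than the one for $\lambda$. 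So the family is not monotone in $\lambda$ and the hypotheses of the monotone convergence theorem are not met. The conclusion is nonetheless true and the repair is immediate: for every $s\leqslant 0$ and every $\lambda$ one has $0\geqslant A_{\gamma_{\lambda,x}}(s)\geqslant\|\alpha\|_\infty s$, a bound uniform over all curves. Hence either apply Fatou's lemma (using the pointwise convergence $\exp\big(\lambda A_{\gamma_{\lambda,x}}(s)\big)\to 1$, which this uniform bound does give), or argue as the paper does: $(C_{\lambda,x})^{-1}\geqslant\int_{-\infty}^0 e^{\lambda\|\alpha\|_\infty s}\,ds=(\lambda\|\alpha\|_\infty)^{-1}$, which yields the explicit and $x$-uniform estimate $C_{\lambda,x}\leqslant\lambda\|\alpha\|_\infty$.
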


\begin{proof}
The first part of the Proposition is a direct consequence of Proposition \ref{equiLip}.

We turn to the second part of the Proposition:

Note that $\lim\limits_{\lambda\to 0}C_{\lambda,x} = 0$. Indeed
 $$(C_{\lambda,x})^{-1} = \int_{-\infty}^0 \exp\left(\lambda A_{\gamma_{\lambda,x}} (s) \right)ds\geqslant \int_{-\infty}^0 \exp\left(\lambda \|\alpha\|_\infty s \right)ds=\frac{1}{\lambda\|\alpha\|_\infty}.$$ 

{\bf The measure $\mu$ is closed:} 
let $f : M\to \R$ be a $C^1$ function, one computes (by integration by part):
 
\begin{align*}
 \int_{TM} D_xf(v) d\mu_{x}^\lambda(x,v) &=C_{\lambda,x} \int_{-\infty}^0 \exp\left(\lambda A_{\gamma_{\lambda,x}} (s) \right)D_{\gamma_{\lambda,x}(s)}f\big(\dot\gamma_{\lambda,x}(s)\big) ds  \\
 &= C_{\lambda,x} \Big[ \exp\left(\lambda A_{\gamma_{\lambda,x}} (s) \right)  f\big( \gamma_{\lambda,x}(s) \big) \Big]_{-\infty}^0 \\
& \qquad - C_{\lambda,x}  \int_{-\infty}^0 \frac{d}{ds} \Big(\exp\left(\lambda A_{\gamma_{\lambda,x}} (s) \right)  \Big)f\big( \gamma_{\lambda,x}(s) \big)ds.
 \end{align*}
 
The first term goes to $0$ as $\lim\limits_{\lambda\to 0}C_{\lambda,x} = 0$ and the function $s\mapsto \exp\left(\lambda A_{\gamma_{\lambda,x}} (s) \right)  f\big( \gamma_{\lambda,x}(s) \big)$ is bounded.

To handle the second term we use that 
$$ \frac{d}{ds} \Big(\exp\left(\lambda A_{\gamma_{\lambda,x}} (s) \right)\Big)=\lambda \alpha\big(\gamma_{\lambda,x}(s)\big) \exp\left(\lambda A_{\gamma_{\lambda,x}} (s) \right)  \geqslant 0.$$ 
Hence
\begin{align*}
\left| \int_{-\infty}^0 \frac{d}{ds} \Big(\exp\left(\lambda A_{\gamma_{\lambda,x}} (s) \right)  \Big)f\big( \gamma_{\lambda,x}(s) \big)ds \right|  & \leqslant  
 \int_{-\infty}^0 \lambda \alpha\big(\gamma_{\lambda,x}(s)\big) \exp\left(\lambda A_{\gamma_{\lambda,x}} (s) \right)  \big|f\big( \gamma_{\lambda,x}(s) \big)\big|ds \\
 &\leqslant \lambda \|\alpha\|_\infty \|f\|_\infty \int_{-\infty}^0 \exp\left(\lambda A_{\gamma_{\lambda,x}} (s) \right) ds\\
 &=\frac{ \lambda \|\alpha\|_\infty \|f\|_\infty}{C_{\lambda,x}} .
\end{align*}
It follows that
$$\lim_{\lambda\to 0} C_{\lambda,x}  \int_{-\infty}^0 \frac{d}{ds} \Big(\exp\left(\lambda A_{\gamma_{\lambda,x}} (s) \right)  \Big)f\big( \gamma_{\lambda,x}(s) \big)ds = 0.$$
Finally, taking the limit along the subsequence $\lambda_n$, we conclude that $\int_{TM} D_xf(v) d\mu(x,v)=0$ hence $\mu $ is closed.

\textbf{The measure $\mu$ is minimizing: } indeed let us start from the equalities coming from Theorem \ref{representation-l}:
\begin{align*}
 \int_{TM} L(x,v) d\mu_x^\lambda(x,v) & =  C_{\lambda,x} \int_{-\infty}^0 \exp\left(\lambda A_{\gamma_{\lambda,x}} (s) \right)L\big(\gamma_{\lambda,x}(s),\dot\gamma_{\lambda,x}(s)\big) ds \\
&=C_{\lambda,x}u_\lambda(x) -\int_{TM}c_H d\mu_x^\lambda.
\end{align*}
Once more, the first term converges to $0$ as $C_{\lambda,x}\to 0$ as $\lambda \to 0$, and the functions $(u_\lambda)_{\lambda} \in (0,1]$   are equibounded. Going along the subsequence $\lambda_n$ we conclude that $\int_{TM}L(x,v)d\mu = -c_H$ as announced.

\end{proof}

\begin{lemma}\label{ineglambda}
Let $w$ be any critical subsolution. For any $\lambda \in (0,1]$ and $x\in M$
$$u_\lambda(x) \geqslant w(x) -\frac{\lambda}{C_{\lambda,x}}\int_{TM} \alpha(y)w(y)d\mu_\lambda^x(y,v).$$
\end{lemma}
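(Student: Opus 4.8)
The plan is to combine the representation formula of Theorem \ref{representation-l} with the defining inequality of a critical subsolution, and then integrate by parts against the nonnegative weight $\exp\big(\lambda A_{\gamma_{\lambda,x}}(s)\big)$. Write $\gamma=\gamma_{\lambda,x}$ and $A=A_{\gamma_{\lambda,x}}$ for brevity. Since $w$ is a critical subsolution, it is Lipschitz by Proposition \ref{lip} and, by Proposition \ref{sub}, it satisfies $w\big(\gamma(t)\big)-w\big(\gamma(s)\big)\leqslant \int_s^t\big[L\big(\gamma(\sigma),\dot\gamma(\sigma)\big)+c_H\big]d\sigma$ for all $s<t\leqslant 0$. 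Differentiating, this gives the almost-everywhere pointwise bound $\tfrac{d}{ds}\big[w\big(\gamma(s)\big)\big]\leqslant L\big(\gamma(s),\dot\gamma(s)\big)+c_H$, where $w\circ\gamma$ is absolutely continuous because both $w$ and $\gamma$ (Proposition \ref{equiLip}) are Lipschitz.

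First I would multiply this inequality by $\exp\big(\lambda A(s)\big)\geqslant 0$ and integrate over $(-\infty,0]$. By Theorem \ref{representation-l} the left-hand side of the resulting estimate is exactly $u_\lambda(x)$, so I obtain $u_\lambda(x)\geqslant \int_{-\infty}^0\exp\big(\lambda A(s)\big)\tfrac{d}{ds}\big[w\big(\gamma(s)\big)\big]\,ds$. Next I would integrate by parts on each finite interval $[-T,0]$ and let $T\to+\infty$. Since $A(0)=0$, the boundary contribution at $s=0$ equals $w\big(\gamma(0)\big)=w(x)$, while $\tfrac{d}{ds}\exp\big(\lambda A(s)\big)=\lambda\,\alpha\big(\gamma(s)\big)\exp\big(\lambda A(s)\big)$ produces the term $-\lambda\int_{-\infty}^0\alpha\big(\gamma(s)\big)\exp\big(\lambda A(s)\big)w\big(\gamma(s)\big)\,ds$.

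The remaining boundary term at $s\to-\infty$ is where the main work lies: I must show $\exp\big(\lambda A(-T)\big)w\big(\gamma(-T)\big)\to 0$. This is precisely where Lemma \ref{courbe-aub} enters, since $A(-T)=-\int_{-T}^0\alpha\circ\gamma\to-\infty$ forces $\exp\big(\lambda A(-T)\big)\to 0$, and $w$ is bounded on the compact manifold $M$. Lemma \ref{chepalle} (integrability of $\exp(\lambda A)$) together with the boundedness of $L\circ(\gamma,\dot\gamma)$ and of $w$ guarantees that all the integrals involved converge absolutely, so the passage $T\to+\infty$ is legitimate. Finally, unwinding the definition of $\mu_x^\lambda$ with the test function $(y,v)\mapsto\alpha(y)w(y)$ identifies $\lambda\int_{-\infty}^0\alpha\big(\gamma(s)\big)\exp\big(\lambda A(s)\big)w\big(\gamma(s)\big)\,ds$ with $\tfrac{\lambda}{C_{\lambda,x}}\int_{TM}\alpha(y)w(y)\,d\mu_x^\lambda(y,v)$, yielding the claimed inequality. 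The only genuine subtlety is the justification of this infinite integration by parts and the vanishing of the boundary term at $-\infty$; everything else is a direct computation.
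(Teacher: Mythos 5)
Your proof is correct, and it takes a genuinely different route from the paper's. The paper never differentiates $w\circ\gamma_{\lambda,x}$ directly: instead it regularizes $w$ via Proposition \ref{approx} into a smooth $w_\varepsilon$ with $H(y,D_yw_\varepsilon)\leqslant c_H+\varepsilon$, bounds $L\big(\gamma_{\lambda,x}(s),\dot\gamma_{\lambda,x}(s)\big)+c_H$ from below by $D_{\gamma_{\lambda,x}(s)}w_\varepsilon\big(\dot\gamma_{\lambda,x}(s)\big)-\varepsilon$ through the Fenchel inequality (Proposition \ref{fenchel}), integrates by parts using the genuine $C^1$ chain rule, and then passes to two successive limits, $t\to+\infty$ and $\varepsilon\to 0$. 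You bypass the approximation entirely: from Proposition \ref{sub} you get $w\big(\gamma(t)\big)-w\big(\gamma(s)\big)\leqslant\int_s^t\big[L+c_H\big]$, so $s\mapsto w\big(\gamma(s)\big)-\int\big[L+c_H\big]$ is non-increasing and Lipschitz, which legitimizes the a.e.\ pointwise bound $\tfrac{d}{ds}\big[w\big(\gamma(s)\big)\big]\leqslant L\big(\gamma(s),\dot\gamma(s)\big)+c_H$ without ever invoking a chain rule for the merely Lipschitz $w$ (the one step that would be delicate, and which your monotonicity argument cleanly sidesteps). After that, both proofs share the same skeleton: weight by $\exp\big(\lambda A(s)\big)$, integrate by parts (valid here since the product of Lipschitz functions is absolutely continuous on compact intervals), kill the boundary term at $-\infty$ with Lemma \ref{courbe-aub}, and invoke Lemma \ref{chepalle} for absolute convergence. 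What your route buys is economy — a single limit $T\to+\infty$ instead of a double limit, and no appeal to the mollification machinery; what the paper's route buys is uniformity with its other arguments (Propositions \ref{inclusion} and \ref{constraints} use the same Fenchel-plus-approximation pattern), so the same template is reused three times rather than introducing a separate real-analysis argument.
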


\begin{proof}
Let $\varepsilon>0$ and $w_\varepsilon \in C^1(M,\R)$ given by Proposition \ref{approx} such that $\|w-w\|_\varepsilon \leqslant \varepsilon$ and $H(y, D_yw_\varepsilon) \leqslant c_H+\varepsilon$ for all $y\in M$.

Let $t>0$, by Theorem \ref{representation}, and the Fenchel inequality (Proposition \ref{fenchel}),
\begin{align*}
u_\lambda(x) &= \exp\left(\lambda A_{\gamma_{\lambda,x}} (-t) \right) u_\lambda\big(\gamma_{\lambda,x}(-t)\big)+\int_{-t}^0 \exp\left(\lambda A_{\gamma_{\lambda,x}} (s) \right)\left[ L\big(\gamma_{\lambda,x}(s),\dot\gamma_{\lambda,x}(s)\big)+c_H\right] ds  
\\
&\geqslant  \exp\left(\lambda A_{\gamma_{\lambda,x}} (-t) \right) u_\lambda\big(\gamma_{\lambda,x}(-t)\big) \\
&\qquad +\int_{-t}^0  \exp\left(\lambda A_{\gamma_{\lambda,x}} (s) \right)\left[ D_{\gamma_{\lambda,x}(s)}w_\varepsilon \big( \dot\gamma_{\lambda,x}(s)\big)- H\big(\gamma_{\lambda,x}(s),D_{\gamma_{\lambda,x}(s)}w_\varepsilon\big)+c_H\right] ds  
\\
&\geqslant  \exp\left(\lambda A_{\gamma_{\lambda,x}} (-t) \right) u_\lambda\big(\gamma_{\lambda,x}(-t)\big) \\
&\qquad +\int_{-t}^0  \exp\left(\lambda A_{\gamma_{\lambda,x}} (s) \right)\left[ D_{\gamma_{\lambda,x}(s)}w_\varepsilon \big( \dot\gamma_{\lambda,x}(s)\big) \right] ds 
 -\varepsilon \int_{-t}^0  \exp\left(\lambda A_{\gamma_{\lambda,x}} (s) \right)ds
 \\
 &=w_\varepsilon(x) -\int_{-t}^0 \frac{d}{ds} \left[   \exp\left(\lambda A_{\gamma_{\lambda,x}} (s) \right)  \right]w_\varepsilon \big(\gamma_{\lambda,x}(s)\big)ds \\
&\qquad+  \exp\left(\lambda A_{\gamma_{\lambda,x}} (-t) \right) \big[u_\lambda\big(\gamma_{\lambda,x}(-t)\big)-w_\varepsilon\big(\gamma_{\lambda,x}(-t)\big) \big] 
 -\varepsilon \int_{-t}^0  \exp\left(\lambda A_{\gamma_{\lambda,x}} (s) \right)ds \\
  &=w_\varepsilon(x) -\int_{-t}^0   \exp\left(\lambda A_{\gamma_{\lambda,x}} (s) \right)  \lambda \alpha  \big(\gamma_{\lambda,x}(s)\big)w_\varepsilon \big(\gamma_{\lambda,x}(s)\big)ds \\
&\qquad+  \exp\left(\lambda A_{\gamma_{\lambda,x}} (-t) \right) \big[u_\lambda\big(\gamma_{\lambda,x}(-t)\big)-w_\varepsilon\big(\gamma_{\lambda,x}(-t)\big) \big] 
 -\varepsilon \int_{-t}^0  \exp\left(\lambda A_{\gamma_{\lambda,x}} (s) \right)ds .
\end{align*}
Sending $t\to +\infty$ and recalling Lemmas \ref{courbe-aub} and \ref{chepalle} we obtain that
$$u_\lambda(x)\geqslant 
w_\varepsilon(x) -\int_{-\infty}^0   \exp\left(\lambda A_{\gamma_{\lambda,x}} (s) \right)  \lambda \alpha  \big(\gamma_{\lambda,x}(s)\big)w_\varepsilon \big(\gamma_{\lambda,x}(s)\big)ds 
 -\varepsilon \int_{-\infty}^0  \exp\left(\lambda A_{\gamma_{\lambda,x}} (s) \right)ds .
$$

Finally, letting $\varepsilon \to 0$ yields
\begin{multline*}
u_\lambda(x)\geqslant 
w(x) -\int_{-\infty}^0   \exp\left(\lambda A_{\gamma_{\lambda,x}} (s) \right)  \lambda \alpha  \big(\gamma_{\lambda,x}(s)\big)w\big(\gamma_{\lambda,x}(s)\big)ds \\
 =w(x) -\frac{\lambda}{C_{\lambda,x}}\int_{TM} \alpha(y)w(y)d\mu_\lambda^x(y,v).
 \end{multline*}
\end{proof}

We finally finish the proof of Theorem \ref{main}:
\begin{proof}[end of the proof of Theorem \ref{main}]
Recall that we have set $\S_0$  to be the set of critical subsolutions $u$ to \eqref{HJ-crit} verifying the constraint \eqref{constraint}:
$$\forall \mu\in \PP_{\mathrm{min}}, \quad \int_{TM} \alpha(x)u(x) d\mu(x,v) \leqslant 0.$$

We then have defined $u_0(x) = \sup\limits_{u\in \S_0} u(x)$.

Let $(\lambda_n)_{n\in \mathbb N}$ be a decreasing sequence converging to $0$ such that $u_{\lambda_n}$ uniformly converge to a function $v_0$. We have established that $v_0\leqslant u_0$. Let us establish the reverse inequality.

Let $x\in M$. Up to taking a subsequence, we may assume that the family of measures $\mu_{\lambda_n}^x$ converges to a measure $\mu_x$ that is a Mather measure thanks to Proposition \ref{mesure-lim}. Let $w\in \S_0$. We know $\lim\limits_{n\to +\infty } \int_{TM}\alpha(y) w(y)d \mu_{\lambda_n}^x(y,v) =  \int_{TM} \alpha(y)w(y)d \mu \leqslant 0$ and thanks to Lemma \ref{pallebis} we infer that 
$$\limsup_{n\to +\infty}\frac{\lambda_n}{C_{\lambda_n,x}}\int_{TM} \alpha(y)w(y)d\mu_{\lambda_n}^x(y,v) \leqslant 0.$$
Combining with Lemma \ref{ineglambda} we deduce that 
$$v_0(x) \geqslant w(x) - \limsup_{n\to +\infty}\frac{\lambda_n}{C_{\lambda_n,x}}\int_{TM} \alpha(y)w(y)d\mu_{\lambda_n}^x(y,v)\geqslant w(x).$$
As the above inequality holds for all $w\in \S_0$ it follows that $v_0(x) \geqslant u_0(x)$ which concludes the proof as this is true for all $x\in M$.

\end{proof}

\section{An alternate formula for $u_0$}\label{formula}
We want to establish another formula for the limit function $u_0$. Again, this follows closely  section 4 of \cite{DFIZ}.

\begin{df}\rm \label{limit2}
We define the function $\hat u_0 : M\to \R$ by
\begin{equation}\label{limitbis}
\forall x\in M, \quad \hat u_0(x) = \min_{\mu \in \PP_{\mathrm{min}}} \quad \frac{\int_{TM} \alpha(y)h(y,x) d\mu(y,v)}{\int_{TM} \alpha(y) d\mu(y,v)},
\end{equation}
where $h$ is the Peierls barrier (Definition \ref{Peierl}). 
\end{df}
We aim at proving that $u_0 = \hat u_0$.

\begin{lemma}\label{critsub}
The function $\hat u_0$ is a critical subsolution. 
\end{lemma}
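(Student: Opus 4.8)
The plan is to show that $\hat u_0$ is a critical subsolution by exhibiting it as a pointwise infimum of a suitable family of critical subsolutions and then invoking the stability results already established. The key observation is that for each fixed $\mu \in \PP_{\mathrm{min}}$ the quantity
$$
x \mapsto \frac{\int_{TM} \alpha(y) h(y,x)\, d\mu(y,v)}{\int_{TM} \alpha(y)\, d\mu(y,v)}
$$
should itself be a critical subsolution, and then $\hat u_0$ is the infimum of these functions over $\mu \in \PP_{\mathrm{min}}$. I would first check that the denominator $\int_{TM}\alpha(y)\,d\mu(y,v)$ is strictly positive for every Mather measure $\mu$: by Proposition \ref{inclusion} the support of $\pi^*\mu$ is contained in $\A$, and by hypothesis ($\alpha 2$) the function $\alpha$ is positive on $\A$, so the integral is positive and the expression is well defined. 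This is the point where the standing positivity assumption on $\A$ enters crucially.

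Next I would verify that for each fixed $\mu$ the numerator $x \mapsto \int_{TM} \alpha(y) h(y,x)\, d\mu(y,v)$ is a critical subsolution. By item (3) of Proposition \ref{proph}, for each fixed $y$ the function $x \mapsto h(y,x)$ is a critical solution, in particular a critical subsolution. Since $\alpha(y) \geqslant 0$, the integrand is a nonnegative-weighted average of critical subsolutions over the (compact-support) measure $\mu$. The finitely-supported case is just a convex combination, covered by item (3) of Proposition \ref{viscositygen}, since the relevant critical subsolutions are all equi-Lipschitz with the common constant $\kappa_{c_H}$ from \eqref{kappa}. For a general measure I would approximate the integral by convex combinations (Riemann-type sums over a partition of $\mathrm{supp}(\mu)$, using that $h$ is Lipschitz by item (1) of Proposition \ref{proph}) and pass to the uniform limit, applying the stability Proposition \ref{stab}; after dividing by the positive constant $\int \alpha\, d\mu$, the normalized function remains a critical subsolution. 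Thus each competitor in the minimum defining $\hat u_0$ is a critical subsolution.

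Finally, I would observe that $\hat u_0$ is the pointwise infimum over $\mu \in \PP_{\mathrm{min}}$ of this family, and that all these functions are equi-Lipschitz: each equals a normalized average of functions $h(y,\cdot)$, all of which are $\kappa_{c_H}$-Lipschitz, so every competitor is $\kappa_{c_H}$-Lipschitz and the infimum is attained (the minimum in \eqref{limitbis} makes sense by compactness of $\PP_{\mathrm{min}}$ and continuity of the ratio in $\mu$). By item (2) of Proposition \ref{viscositygen}, a pointwise infimum of equi-Lipschitz subsolutions is again a subsolution, which gives that $\hat u_0$ is a critical subsolution and completes the proof.

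The main obstacle I anticipate is the passage from convex combinations to the integral against a general measure $\mu$: Proposition \ref{viscositygen}(3) is stated for finite convex combinations, so I must justify the approximation carefully, ensuring the Riemann sums converge uniformly to the integral (which follows from Lipschitz continuity of $h(y,x)$ jointly and uniform boundedness of the supports) and that equi-Lipschitz bounds are preserved throughout so that the stability and infimum arguments apply. Everything else is a routine combination of the weak KAM facts recalled in Section \ref{sec:weakKAM} together with the positivity of $\alpha$ on $\A$.
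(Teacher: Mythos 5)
Your proof is correct and follows essentially the same route as the paper: well-definedness via Proposition \ref{inclusion} and ($\alpha$2), then each normalized average $x\mapsto \int_{TM}\alpha(y)h(y,x)\,d\mu(y,v)\big/\int_{TM}\alpha(y)\,d\mu(y,v)$ is a critical subsolution as a convex combination of the critical solutions $h(y,\cdot)$, and finally $\hat u_0$ is a pointwise infimum of these equi-Lipschitz subsolutions, handled by Proposition \ref{viscositygen}. The only difference is that the paper cites Proposition \ref{viscositygen}(3) directly for the integral average, whereas you justify that step by Riemann-sum approximation plus the stability Proposition \ref{stab} — a harmless, indeed slightly more careful, filling-in of the same argument.
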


\begin{proof}
As $h$ is bounded, $\hat u_0$ is clearly well defined as $\mu\in \PP_{\mathrm{min}}$ is supported on $\alpha^{-1}\big( (0,+\infty)\big)$  (Proposition \ref{inclusion}).

By Proposition \ref{proph} each function $h_y = h(y,\cdot)$ is a critical solution. Hence if $\mu\in  \PP_{\mathrm{min}} $ the function 
$$h_\mu : x\mapsto \frac{\int_{TM} \alpha(y)h(y,x) d\mu(y,v)}{\int_{TM} \alpha(y) d\mu(y,v)},$$
is itself a critical subsolution as a convex combination of such (Proposition \ref{viscositygen}).

Finally, again thanks to Proposition \ref{viscositygen}, $\hat u_0$ is a critical subsolution as a pointwise minimum of critical subsolutions (that are automatically equiLipschitz in this case).
\end{proof}

\begin{lemma}\label{inegu0}
For all $x\in M$, $u_0(x) \leqslant \hat u_0(x)$.
\end{lemma}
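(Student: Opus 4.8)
The plan is to reduce the statement to a pointwise comparison against each individual Mather measure. Since $u_0(x)=\sup_{u\in\S_0}u(x)$ and $\hat u_0(x)=\min_{\mu\in\PP_{\mathrm{min}}}h_\mu(x)$ with $h_\mu(x)=\big(\int_{TM}\alpha(y)h(y,x)\,d\mu(y,v)\big)\big/\big(\int_{TM}\alpha(y)\,d\mu(y,v)\big)$, the desired inequality $u_0\leqslant\hat u_0$ is equivalent to showing that $u(x)\leqslant h_\mu(x)$ for every $u\in\S_0$, every $\mu\in\PP_{\mathrm{min}}$ and every $x\in M$; taking the supremum over $u$ and the minimum over $\mu$ afterwards gives the claim.

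So I would fix $u\in\S_0$, $\mu\in\PP_{\mathrm{min}}$ and $x\in M$, and invoke item (2) of Proposition \ref{proph}: as $u$ is a critical subsolution, $u(x)-u(y)\leqslant h(y,x)$ for every $y\in M$. Multiplying this inequality by the nonnegative weight $\alpha(y)$ and integrating against $\mu$ yields
$$u(x)\int_{TM}\alpha(y)\,d\mu(y,v)\leqslant\int_{TM}\alpha(y)u(y)\,d\mu(y,v)+\int_{TM}\alpha(y)h(y,x)\,d\mu(y,v).$$
The constraint defining $\S_0$ now disposes of the first term on the right, since $\int_{TM}\alpha(y)u(y)\,d\mu(y,v)\leqslant 0$, leaving $u(x)\int_{TM}\alpha\,d\mu\leqslant\int_{TM}\alpha(y)h(y,x)\,d\mu(y,v)$.

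The only step requiring genuine care, and the place where hypothesis ($\alpha$2) is used, is the positivity of the denominator $\int_{TM}\alpha(y)\,d\mu(y,v)$, needed to divide and conclude $u(x)\leqslant h_\mu(x)$. This is where I would argue that $\mu$ is supported on the Mather set $\widetilde\MM$, whose projection lies in $\A$ by Proposition \ref{inclusion}, while $\alpha>0$ on the compact set $\A$ by ($\alpha$2); hence $\alpha\circ\pi$ is bounded below by a positive constant on $\mathrm{supp}(\mu)$ and the integral is strictly positive. Dividing then gives $u(x)\leqslant h_\mu(x)$ for the chosen data, and since $u\in\S_0$ and $\mu\in\PP_{\mathrm{min}}$ were arbitrary, passing to the supremum in $u$ and the minimum in $\mu$ completes the proof. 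Beyond this positivity check there is no real obstacle: the argument is just the subsolution inequality for the Peierls barrier weighted by $\alpha$ and combined with the defining constraint of $\S_0$.
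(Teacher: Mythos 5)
Your proof is correct, and it differs from the paper's own argument in one structural point worth noting. The core computation is the same in both: multiply the Peierls-barrier subsolution inequality by $\alpha(y)$, integrate against a Mather measure $\mu$, discard the term $\int_{TM}\alpha(y)u(y)\,d\mu(y,v)\leqslant 0$ using the constraint \eqref{constraint}, and divide by $\int_{TM}\alpha(y)\,d\mu(y,v)>0$. The difference is which function the argument is applied to. The paper runs it directly on $u_0$, and to know that $u_0$ itself satisfies the constraint it invokes Proposition \ref{constraints}; this is legitimate only because $u_0$ has already been identified, in Theorem \ref{main}, as the uniform limit of the $u_\lambda$, hence as an accumulation point to which Proposition \ref{constraints} applies. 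You instead run the computation for an arbitrary $u\in\S_0$, where the constraint holds by the very definition of $\S_0$, and only then take the supremum over $u$ (valid, since the bound $u(x)\leqslant h_\mu(x)$ has a right-hand side independent of $u$) and the minimum over $\mu$. This makes your proof independent of the convergence theorem: it uses only the definitions of $\S_0$, $u_0$, $\hat u_0$, together with Proposition \ref{proph}, Proposition \ref{inclusion} and ($\alpha$2). The distinction is not cosmetic: a pointwise supremum of functions each satisfying the integral constraint need not satisfy it (integration does not commute with suprema), so one cannot simply assert $u_0\in\S_0$; either your route (fix $u$ first, then take the supremum of a uniform bound) or the paper's (use that $u_0$ is the limit of the $u_\lambda$) is required. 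Finally, your justification of the positivity of the denominator via Proposition \ref{inclusion} and ($\alpha$2) is exactly the point the paper makes when checking that $\hat u_0$ is well defined in Lemma \ref{critsub}, so nothing is missing there.
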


\begin{proof}
Let $x\in M$ and $\mu \in  \PP_{\mathrm{min}}$ recall that by Proposition \ref{constraints}, 
$$\int_{TM}\alpha(y) u_0(y) d\mu (y,v)\leqslant 0.$$

Moreover, by  Proposition \ref{proph},
$$\forall y\in M, \quad u_0(x) \leqslant u_0(y) +h(y,x).$$
Multiply the above inequality by $\alpha(y)$ and integrate with respect to $\mu$ to obtain
\begin{align*}
u_0(x)\int_{TM} \alpha(y) d\mu(y,v)& \leqslant \int_{TM}\alpha(y) u_0(y) d\mu(y,v) + \int_{TM} \alpha(y) h(y, x)d\mu(y,v)\\
&\leqslant  \int_{TM} \alpha(y) h(y, x)d\mu(y,v).
\end{align*}
Dividing by $\int_{TM} \alpha(y) d\mu(y,v)$ yields $u_0(x)\leqslant h_\mu (x)$ and taking the minimum over all $\mu \in  \PP_{\mathrm{min}}$ proves the lemma.
\end{proof}

\begin{Th}\label{egaliteu0}
For all $x\in M$, $ \hat u_0(x) = u_0(x)$. 
\end{Th}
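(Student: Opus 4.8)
The plan is to upgrade the inequality $u_0\le \hat u_0$ already proved in Lemma \ref{inegu0} to an equality by establishing the reverse inequality $\hat u_0\le u_0$. Since $u_0=\sup_{u\in \S_0}u$ and, by Lemma \ref{critsub}, $\hat u_0$ is itself a critical subsolution, it suffices to show that $\hat u_0$ belongs to $\S_0$, i.e. that it satisfies the Mather constraint \eqref{constraint}:
\[ \forall \mu\in \PP_{\mathrm{min}},\qquad \int_{TM}\alpha(x)\hat u_0(x)\,d\mu(x,v)\le 0. \]
Indeed, once this is known, $\hat u_0\in \S_0$ forces $\hat u_0\le \sup_{u\in\S_0}u=u_0$ pointwise, and combined with Lemma \ref{inegu0} gives $\hat u_0=u_0$. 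Thus the whole statement reduces to verifying this one family of inequalities.

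To verify the constraint for a fixed $\mu$, the first instinct is to use $\hat u_0\le h_\mu$ (taking the competitor $\mu$ itself in the minimum \eqref{limitbis}) and then to symmetrise the resulting double integral,
\[ \int_{TM}\alpha(x)h_\mu(x)\,d\mu=\frac{1}{2\int \alpha\,d\mu}\iint \alpha(x)\alpha(y)\big[h(x,y)+h(y,x)\big]\,d\mu(x)\,d\mu(y). \]
Here is where the main obstacle lies: the integrand $h(x,y)+h(y,x)$ is nonnegative (it is the Mather/Mañé semidistance, vanishing only within a single static class of the Aubry set), so for a Mather measure whose support meets several static classes this is strictly positive, and the naive estimate produces a useless positive bound. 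The remedy is to exploit that the minimum in \eqref{limitbis} ranges over all of $\PP_{\mathrm{min}}$, so the optimal competitor may be adapted to each base point; equivalently, the functional $\mu\mapsto \int \alpha\hat u_0\,d\mu$ is affine, so it is enough to check the constraint at the extreme points of the compact convex set $\PP_{\mathrm{min}}$. An extreme (ergodic) Mather measure $\mu_e$ is supported in a single static class, on which the Peierls barrier is antisymmetric, $h(x,y)+h(y,x)=0$ for $\mu_e\otimes\mu_e$-almost every $(x,y)$. Using $\hat u_0\le h_{\mu_e}$ together with the same symmetrisation then yields $\int_{TM}\alpha(x)\hat u_0(x)\,d\mu_e\le \int_{TM}\alpha(x)h_{\mu_e}(x)\,d\mu_e=0$, and integrating this over the barycentric decomposition of an arbitrary $\mu$ into extreme measures recovers the constraint for $\mu$.

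The step I expect to be genuinely substantial is the structural input from Aubry--Mather theory invoked above: that every element of $\PP_{\mathrm{min}}$ is a barycentre of extreme Mather measures, each of them supported in a single static class, together with the antisymmetry of the Peierls barrier on such a class. In the present low-regularity framework there is no Hamiltonian flow, so these facts cannot be quoted from the classical smooth ergodic theory and must instead be taken from, or adapted to, the references \cite{DS,DavZav,FS05}; this is the real content of the argument. Once they are available, the two symmetrisations and the reduction to extreme points by affineness are routine, and together with Lemma \ref{inegu0} they give $\hat u_0=u_0$ as claimed.
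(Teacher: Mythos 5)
Your reduction is sound as far as it goes: since $u_0=\sup_{u\in\S_0}u$, Lemma \ref{inegu0} gives $u_0\leqslant \hat u_0$, and your symmetrisation computation is correct, the whole matter would indeed follow if you could show $\hat u_0\in\S_0$, which you correctly reduce to the structural claim that every $\mu\in\PP_{\mathrm{min}}$ is a barycentre of minimizing measures whose projected supports each lie in a single static class. But that claim is precisely the gap, and you cannot discharge it by pointing to \cite{DS,DavZav,FS05}: it is not there, and it cannot be, because its classical proof is unavailable in this paper's setting. Here $H$ is merely continuous, there is no Euler--Lagrange flow, and $\PP_{\mathrm{min}}$ consists of minimizing \emph{closed} measures, not flow-invariant ones; the smooth-case argument (extreme minimizing measures are ergodic by ergodic decomposition, and the generic orbit of an ergodic Mather measure is a static curve, hence its closure lies in one static class) uses invariance and orbits at every step. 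An elementary substitute also fails: one can restrict a closed measure to $\pi^{-1}(U)$, $U$ a neighbourhood of one static class, and preserve closedness by a cut-off argument only when the rest of the support stays at positive distance from $U$; when infinitely many static classes accumulate on one another this breaks down, so the decomposition is not routine. In short, you have deferred ``the real content of the argument'' to a statement that is unproved in this framework and plausibly very hard, so the proof as written is incomplete.

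The paper's proof sidesteps the decomposition entirely, and the fix is worth internalizing: you do not need $\hat u_0\in\S_0$, only the reverse inequality \emph{on the Aubry set}, after which Theorem \ref{compA} finishes. Fix $y\in M$ and consider $w=-h(\cdot,y)+\hat u_0(y)$, where by Definition \ref{limit2} the added constant is $\min_{\mu\in\PP_{\mathrm{min}}}\big(\int\alpha(z)h(z,y)\,d\mu\big)/\big(\int\alpha\,d\mu\big)$. This $w$ is a critical subsolution (Proposition \ref{proph}), and for \emph{every} $\mu\in\PP_{\mathrm{min}}$ the constraint \eqref{constraint} holds by the very choice of the constant: $\int\alpha w\,d\mu=-\int\alpha(z)h(z,y)\,d\mu+\hat u_0(y)\int\alpha\,d\mu\leqslant 0$, a one-line check requiring no knowledge of the structure of $\PP_{\mathrm{min}}$. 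Hence $w\in\S_0$ and $u_0\geqslant w$; evaluating at $x=y$ with $y\in\A$, where $h(y,y)=0$, gives $u_0(y)\geqslant\hat u_0(y)$ on $\A$. Since $u_0$ is a critical solution (hence supersolution) and $\hat u_0$ is a critical subsolution (Lemma \ref{critsub}), Theorem \ref{compA} propagates the inequality from $\A$ to all of $M$, and Lemma \ref{inegu0} gives equality. Note the role reversal compared with your plan: instead of testing the single function $\hat u_0$ against all Mather measures (which forces you to understand $\PP_{\mathrm{min}}$), the paper feeds a $y$-indexed family of explicit competitors into $\S_0$, each of which satisfies the constraint trivially, and lets the uniqueness property of the Aubry set do the global work.
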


\begin{proof}
Let $y\in M$, recall that the function $-h(\cdot , y)$ is a critical subsolution by Proposition \ref{proph}.
Clearly the function $w : M\to \R$ defined by
\begin{align*}
\forall x\in M, \quad w(x) &= -h(x,y) -\max_{\mu \in  \PP_{\mathrm{min}}}\frac{ \int_{TM} -\alpha(z) h(z, y)d\mu(z,v)}{\int_{TM} \alpha(z) d\mu(z,v)   }\\
&= -h(x,y) +\min_{\mu \in  \PP_{\mathrm{min}}}\frac{ \int_{TM} \alpha(z) h(z, y)d\mu(z,v)}{\int_{TM} \alpha(z) d\mu(z,v)   },
\end{align*}
is a critical subsolution verifying the constraint \eqref{constraint}: $w\in \S_0$.
It follows from the definition of $u_0$ that $u_0\geqslant w$.

Evaluating at $y\in \A$ and recalling that $h(y,y)= 0$ by Definition \ref{aubrydef} we obtain that
$$u_0(y) \geqslant \min_{\mu \in  \PP_{\mathrm{min}}}\frac{ \int_{TM} \alpha(z) h(z, y)d\mu(z,v)}{\int_{TM} \alpha(z) d\mu(z,v)   }=\hat u_0(y).$$
As $u_0$ is a critical solution (hence supersolution) and the inequality holds for all $y\in \A$, we conclude from Theorem \ref{compA} that the inequality $u_0\geqslant \hat u_0$ is valid on all $M$ and this with Lemma \ref{inegu0} concludes the proof.

\end{proof}

As a last result we come back to one of the easy cases treated at the beginning of this section:
\begin{prop}
Assume that constant functions are critical subsolutions. Then
$$\forall x\in M,\quad u_0(x)=\min_{y\in A} h(y,x).$$
\end{prop}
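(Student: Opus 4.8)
The plan is to exhibit both sides as critical solutions that vanish on $\A$ and then invoke the uniqueness statement of Theorem \ref{compA}. The hypothesis enters only through the constant $w\equiv 0$: being a critical subsolution, Proposition \ref{proph}(2) gives $h(x,y)\geq 0$ for all $(x,y)\in M\times M$, and Proposition \ref{sub} then shows $h_t\geq 0$ for every $t>0$. Set $g(x)=\min_{y\in\A}h(y,x)$, which is attained since $\A$ is compact and $h$ continuous. Each $h(y,\cdot)$ is a critical solution (Proposition \ref{proph}(3)) and these are equi-Lipschitz, so Proposition \ref{viscositygen} makes $g$ both a critical subsolution and a critical supersolution, hence a critical solution. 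On $\A$ one has $g(y)\leq h(y,y)=0$ by Definition \ref{aubrydef}, while $g\geq 0$ everywhere because $h\geq 0$; thus $g\equiv 0$ on $\A$.

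The core of the proof is to show that $u_0$ also vanishes on $\A$. Recall that $u_0$ is a critical solution (Theorem \ref{main}) satisfying \eqref{constraint} (Proposition \ref{constraints}) and that $u_0\geq 0$ because $0\in\S_0$. For $\mu\in\PP_{\mathrm{min}}$ the support of $\pi^*\mu$ lies in $\MM\subset\A$ (Proposition \ref{inclusion}), where $\alpha>0$ by $(\alpha 2)$; combining $\int_{TM}\alpha u_0\,d\mu\leq 0$ with $\alpha u_0\geq 0$ forces $u_0\equiv 0$ on $\MM$. To extend this to all of $\A$, fix $x_0\in\A$, so that $h(x_0,x_0)=0$. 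I would choose times $t_n\to+\infty$ and minimizing loops $\gamma_n\in AC([-t_n,0],M)$ with $\gamma_n(-t_n)=\gamma_n(0)=x_0$ and
\[
\int_{-t_n}^0\big[L\big(\gamma_n(s),\dot\gamma_n(s)\big)+c_H\big]\,ds=h_{t_n}(x_0,x_0)\longrightarrow 0 .
\]
The normalized occupation measures $\sigma_n=\tfrac{1}{t_n}\int_{-t_n}^0\delta_{(\gamma_n(s),\dot\gamma_n(s))}\,ds$ are closed (the boundary terms cancel because $\gamma_n$ is a loop), have uniformly bounded first moments by superlinearity, and satisfy $\int_{TM}L\,d\sigma_n\to -c_H$; hence any weak limit $\sigma$ is a Mather measure by Theorem \ref{minimizing}.

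It then remains to prove that $h(y,x_0)=0$ for every $y\in\mathrm{supp}(\pi^*\sigma)$. Writing $y=\lim_n\gamma_n(s_n)$, the loop splits at time $s_n$ into two arcs whose actions are both nonnegative (as $h_t\geq 0$) and sum to $h_{t_n}(x_0,x_0)\to 0$, so each arc action tends to $0$; using the triangle inequalities of Proposition \ref{inegtr}, the relation $h(x_0,x_0)=0$, and the continuity of $h$ (Proposition \ref{proph}) together with the equi-Lipschitz behaviour of the $h_t$, one passes to the limit to get $h(y,x_0)\leq 0$, whence $h(y,x_0)=0$. Choosing any $y_0\in\mathrm{supp}(\pi^*\sigma)\subset\MM$ and using that $u_0$ is a subsolution, Proposition \ref{proph}(2) gives $u_0(x_0)\leq u_0(y_0)+h(y_0,x_0)=0$, and with $u_0\geq 0$ we conclude $u_0(x_0)=0$. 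Since $u_0$ and $g$ are then critical solutions agreeing (equal to $0$) on $\A$, the uniqueness part of Theorem \ref{compA} yields $u_0=g$ on $M$, that is $u_0(x)=\min_{y\in\A}h(y,x)$. I expect the main obstacle to be precisely this support computation: controlling the two arcs of the loops and justifying the limit $h(y,x_0)=0$ on $\mathrm{supp}(\pi^*\sigma)$ (a reparametrization and semicontinuity issue), which is where one re-proves, through closed measures rather than an Euler--Lagrange flow, that every point of $\A$ is linked to the Mather set by a vanishing Peierls barrier.
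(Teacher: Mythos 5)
Your proof is correct, but it takes a genuinely different and much heavier route than the paper's. Both arguments share the same skeleton --- $g=\min_{y\in\A}h(y,\cdot)$ is a critical solution vanishing on $\A$, $u_0$ is a critical solution vanishing on $\A$, conclude by the uniqueness part of Theorem \ref{compA} --- and they differ in how $u_0$ is pinned between $0$ and $g$. The paper stays at the level of the discounted equation: since $h\geqslant 0$, each $h_y=h(y,\cdot)$ is a nonnegative critical solution, hence a supersolution of \eqref{HJ-l}, while $0$ is a subsolution; the comparison principle (Proposition \ref{comp-l}) gives $0\leqslant u_\lambda\leqslant h_y$ for every $\lambda>0$ and every $y$, and one passes to the limit $\lambda\to 0$. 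You instead work on the limit object through its variational characterization: $u_0\geqslant 0$ since $0\in\S_0$, then $u_0\equiv 0$ on $\MM$ from the constraint \eqref{constraint} (via Proposition \ref{constraints} and Proposition \ref{inclusion}), and then you must propagate the vanishing from $\MM$ to all of $\A$ by building, from near-optimal loops at $x_0\in\A$, a Mather measure whose projected support is linked to $x_0$ by zeroes of $h$. That last step, which you rightly flag as the main obstacle, is where all the work lies: passing closedness and the action to the weak limit requires uniform integrability of $\|v\|$ (from superlinearity) or equi-Lipschitz bounds on the minimizing loops, and the support computation needs the equi-Lipschitz continuity of $h_t(\cdot,x_0)$ for $t$ bounded below together with the appending inequality $h_{t+t'}(y,x_0)\leqslant h_t(y,x_0)+h_{t'}(x_0,x_0)$ of Proposition \ref{inegtr}, which turns finite-time smallness into $h(y,x_0)=0$ without needing the splitting times to diverge. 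These are standard Aubry--Mather facts and your sketch is completable, so I would not call this a gap; but you are essentially re-proving that every Aubry point is joined to the Mather set by a zero of the Peierls barrier, a fact the paper's sandwich argument sidesteps entirely. What your approach buys is that it never uses the discounted comparison principle, only properties of the limit $u_0$; what the paper's buys is brevity and the elimination of all measure-theoretic technicalities.
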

\begin{proof}
As constant functions are critical subsolutions, by Proposition \ref{proph}, we get that the Peierls barrier is nonnegative: $h\geqslant 0$.

Let us set $v : x\mapsto \min\limits_{y\in A} h(y,x)$. By Proposition \ref{proph}, $v$ is the minimum of critical solutions of the type $h_y$. It follows from Proposition \ref{viscositygen} that $v$ is itself a critical solution.

As $h\geqslant 0$, each critical solution $h_y$ is also a supersolution of \eqref{HJ-l}. It follows from the comparison principle (Proposition \ref{comp-l}) and from the proof of Proposition \ref{bb1} that $0\leqslant u_\lambda \leqslant h_y$ for all $y\in M$. In particular, $0\leqslant u_\lambda \leqslant v$. Passing to the limit we obtain $0\leqslant u_0\leqslant v$.

To conclude we notice that if $y\in \A$, then $0\leqslant v(y)\leqslant h(y,y)= 0$. Hence both $u_0$ and $v$ are critical solutions that vanish on $\A$. By Theorem \ref{compA}, they are equal.
\end{proof}

As a concluding remark, it is interesting to notice that in this very particular case (when constants are critical subsolutions), the limiting function is actually independent on the function $\alpha$. Of course this is not true in general.

\bibliography{weakly}
\bibliographystyle{siam}

\end{document}